\def\R{\mathbb{R}}
\def\Z{\mathbb{Z}}
\def\C{\mathbb{C}}
\def\A{\mathcal{A}}
\def\B{\mathcal{B}}
\def\supp{{\rm supp}}
\def\FF{\mathfrak{F}}
 \newcommand{\du}{\text{\rm d}u}
\renewcommand{\d}{\text{\rm d}}
\newcommand{\eps}{\varepsilon}
\newtheorem{theorem}{Theorem}
\newtheorem{corollary}[theorem]{Corollary}
\newtheorem{proposition}[theorem]{Proposition}
\newtheorem{lemma}[theorem]{Lemma}
\newtheorem*{conjecture}{Conjecture 1}
\newtheoremstyle{newremark}
  {3pt} %
  {3pt} %
  {} %
  {0em} %
  {\sc} %
  {.} %
  {0.5em} %
  {} %
\theoremstyle{newremark}
\newtheorem*{remark}{Remark}
\DeclareFontFamily{U}{tipa}{}
\DeclareFontShape{U}{tipa}{m}{n}{<->tipa10}{}
\newcommand{\arc@char}{{\usefont{U}{tipa}{m}{n}\symbol{62}}}%
\newcommand{\arc}[1]{\mathpalette\arc@arc{#1}}
\newcommand{\arc@arc}[2]{%
  \sbox0{$\m@th#1#2$}%
  \vbox{
    \hbox{\resizebox{\wd0}{\height}{\arc@char}}
    \nointerlineskip
    \box0
  }%
}
\numberwithin{equation}{section}
\title[Fourier optimization and pair correlation problems]{ Fourier optimization and pair correlation problems }
\author[Das]{Mithun Kumar Das}
\address{ICTP - The Abdus Salam International Centre for Theoretical Physics, Strada Costiera, 11, I - 34151,
Trieste, Italy.}
\address{School of Mathematical Sciences, National Institute of Science Education and Research, A CI of Homi Bhabha National Institute, Jatni, Khurda, 752050, India.}
\email{mdas@ictp.it, mithundas@niser.ac.in}
\author[Ismoilov]{Tolibjon Ismoilov}
\address{SISSA - Scuola Internazionale Superiore di Studi Avanzati, Via Bonomea 265, 34136 Trieste, Italy}
\email{tolibjon.ismoilov@sissa.it}
\author[Ramos]{Antonio Pedro Ramos}
\address{SISSA - Scuola Internazionale Superiore di Studi Avanzati, Via Bonomea 265, 34136 Trieste, Italy}
\email{adeazeve@sissa.it}
\date{\today}                                           
\begin{document}

\subjclass[2010]{41A30, 46E22, 11M06, 11M26}
\keywords{%
Riemann zeta function and L-functions, pair correlation, Hilbert spaces with reproducing kernels, Fourier optimization }
\begin{abstract} 
{We introduce a generic framework to provide bounds related to the pair correlation of sequences belonging to a wide class. We consider analogues of Montgomery's form factor for zeros of the Riemann zeta function in the case of arbitrary sequences satisfying some basic assumptions, and connect their estimation to two extremal problems in Fourier analysis, which are promptly studied. As applications, we provide average bounds of form factors related to some sequences of number theoretic interest, such as the zeros of primitive elements of the Selberg class, Dedekind zeta functions, and the real and imaginary parts of the Riemann zeta function. In the last case, our results bear an implication to a conjecture of Gonek and Ki (2018), showing it cannot hold in some situations. 
}
\end{abstract}

\maketitle 

\section{Introduction}
The celebrated work of Montgomery \cite{M1973} initiated the study of how non-trivial zeros $\rho = \frac{1}{2} + i \gamma$ of the Riemann zeta function $\zeta(s)$ are distributed along the critical line by studying their pair correlation. His approach consisted of introducing the function $F(\alpha)$, a Fourier transform of the distribution of the gaps between zeros $\gamma - \gamma'$. In \cite{M1973}, Montgomery proposed a conjecture on the limiting distribution of the pair correlation of zeros of zeta, which came to be known as the pair correlation conjecture (PCC). 
Goldston \cite{G} subsequently proved an equivalence between PCC and certain asymptotics for average values of Montgomery's function $F(\alpha)$, motivating the study of average estimates of $F(\alpha)$, which was done, for example, in \cite{CCChiM, CMR, G2, GG}.

The approach of \cite{M1973} was extended to many other settings. There are several instances in the literature where analogues of Montgomery's function were used to study the pair correlation of zeros of $L$-functions, Dirichlet series, and similar objects. One can see \cite{MP1999, FGL2014, BMMRTW2015, KY2017, GK2018, CKL} for a few examples. Naturally, there are corresponding conjectures for their asymptotic behavior. In this work, we will look at the problem of average estimates from a much broader perspective, introducing an axiomatic approach which allows us to bound the long-term averages of analogues of Montgomery's function in a wide variety of settings. We do this by connecting these average bounds to two Fourier optimization problems, which constitute another major object of study of this paper. The investigation of these problems uses techniques from Fourier analysis and the theory of reproducing kernel Hilbert spaces.

As a consequence of the bounds we obtain, we address a conjecture of Gonek and Ki \cite{GK2018} related to the pair correlation of zeros of $\operatorname{Re} \zeta$, $\operatorname{Im} \zeta$, and other similar functions along vertical lines in the left-half of the critical strip. We show that the relevant form factor cannot have the conjectured expression in certain regimes.

\subsubsection{Notation}
Throughout this text, we use the following definition of the Fourier transform for a given function $f\in L^1(\mathbb{R})$:
\begin{equation*}
    \mathscr{F}[f](\xi)= \widehat{f}(\xi):=\int_{\mathbb{R}}e^{-2\pi i \xi x} f(x)\, \d x.
\end{equation*}
We also adopt this normalization for the Fourier transform of a finite Borel measure $\mu$, setting
\begin{equation*}
    \widehat{\mu}(\xi):=\int_{\mathbb{R}}e^{-2\pi i \xi x} \, \d \mu(x).
\end{equation*}
For a real number $x$, we denote $x_+ = \max\{x,0\}$. Finally, the quantity 
\begin{equation*}
    s_0 := \min_{x\in \R} \frac{\sin{x}}{x}=-0.217233\ldots
\end{equation*}
will be useful to state many of our results.

\subsection{Montgomery's pair correlation conjecture}
Assuming the Riemann hypothesis (RH), let $\rho=\frac{1}{2}+i\gamma$ denote a non-trivial zero of $\zeta(s)$.
Montgomery conjectured that, assuming RH, the pair correlation of zeros of zeta has the following limiting distribution: for any fixed $\beta>0$,
\begin{equation}\label{eq:montgomeryconjecture}
   \bigg(\frac{T}{2\pi}\log\,{T}\bigg)^{-1}\sum_{\substack{0<\gamma, \gamma'\leq T \\ 0<\gamma-\gamma'\leq \frac{2\pi\beta}{\log\,{T}}}}1\sim \int_{0}^{\beta}\bigg(  1-\bigg(\frac{\sin\,{\pi u}}{\pi u}\bigg)^2 \bigg)\du, \quad \mbox{as } T\rightarrow \infty.
\end{equation}
This is now known as Montgomery's pair correlation conjecture (PCC). In order to study the quantity on the left-hand side of \eqref{eq:montgomeryconjecture}, Montgomery \cite{M1973} introduced the form factor
\begin{equation}\label{eq:montgomoryffunction}
F(\alpha) = F(\alpha, T):= \bigg(\frac{T}{2\pi}\log\,{T}\bigg)^{-1}\sum_{0<\gamma, \gamma'\leq T}T^{i\alpha (\gamma-\gamma')}w(\gamma\!-\!\gamma'),
\end{equation}
where $\alpha \in \mathbb{R}$, $T\geq 2$, $w(u)=\frac{4}{4+u^2}$, and the sums are taken over ordinates of non-trivial zeros of $\zeta(s)$, counting multiplicity. 

 For any function $r \in L^1(\mathbb{R})$ such that $\widehat{r}\in L^1(\mathbb{R})$, the Fourier inversion formula gives that
\begin{equation*}\label{eq:testingffunctionwithr}
\bigg(\frac{T}{2\pi}\log\,{T}\bigg)^{-1}\sum_{0<\gamma, \gamma'\leq T}r\bigg((\gamma\!-\!\gamma')\frac{\log\, {T}}{2\pi}\bigg)w(\gamma\!-\!\gamma')= \int_{\mathbb{R}}\widehat{r}(\alpha) F(\alpha, T) \, \d\alpha.
\end{equation*}
One can see the connection between \eqref{eq:montgomeryconjecture} and \eqref{eq:montgomoryffunction} from this identity.
Indeed, considering an appropriate sequence of functions $r$ that approximate the characteristic function of the interval $(0, \beta]$ in the above identity, we can obtain the asymptotic of the left-hand side of \eqref{eq:montgomeryconjecture}. Therefore, if we know the asymptotic behavior of the function $F(\alpha, T)$ as $T\to \infty$, we know the asymptotic behavior of the pair correlation of zeros. 

Montgomery \cite{M1973}, then Goldston and Montgomery \cite[Lemma 8]{GM1984}, obtained partial progress on understanding $F(\alpha, T)$ conditionally on RH. They proved that
\begin{equation}\label{eq:asymptmont}
F(\alpha, T) = \left(T^{-2|\alpha|} \log{T} +|\alpha| \right)\left( 1 +O\bigg(\sqrt{\frac{\log \,\log {T}}{\log{T}}}\bigg)\right), \quad \text{ as }\;\; T\to \infty,
\end{equation}
uniformly for $0\leq |\alpha|\leq 1$, improving the original work of Montgomery. Not only this, in \cite{M1973} Montgomery also conjectured that for $|\alpha|\geq 1$ and uniformly for any bounded interval,
\begin{equation}\label{eq:montstrongconjecture}
    F(\alpha, T)\sim 1, \quad \mbox{ as }  T \rightarrow \infty,
\end{equation}
which, together with \eqref{eq:asymptmont}, provides a complete description of the asymptotic behavior of $F(\alpha,T)$. This statement is what became known as the strong pair correlation conjecture, which, by the reasoning above, implies \eqref{eq:montgomeryconjecture}.

In 1988, Goldston \cite{G} proved the equivalence of PCC and the following asymptotic for the averages of $F(\alpha, T)$:
\begin{equation}\label{average of factor form}
    \frac{1}{\ell}\int_{b}^{b+\ell}F(\alpha, T)\d \alpha \sim 1, \mbox{ as } T \rightarrow \infty,
\end{equation}
for any $b\geq 1$ and $\ell>0$.
Recently, Carneiro, Milinovich, and Ramos \cite{CMR} proved, assuming RH, that
\begin{align*}
    0.9303 +o(1) <  \frac{1}{\ell}\int_{b}^{b+\ell}F(\alpha, T)\d \alpha < 1.3208 +o(1), \quad \mbox{ as } T\rightarrow \infty,
\end{align*}
uniformly on $b\geq1$ for the upper bound and for $\ell \geq \ell_0(b)$ for the lower bound, improving the previous results in \cite{CCChiM}. They obtained this result by introducing new averaging mechanisms and making full use of the class of test functions considered in \cite{CE} for the sphere packing problem.

We now proceed to the axiomatic formulation which allows the generalization of the work of \cite{CMR} to address multiple different objects.
\subsection{Axiomatic formulation}
Our main purpose is to obtain a unified approach for various pair correlation functions, capturing the behavior of a wide variety of examples.
Our objects of interest will be sequences  $\Gamma(T) = (\gamma_n(T))$ of real numbers depending on a parameter $T \geq T_0$. Note that the elements of the sequence are not necessarily distinct.
For most of the number-theoretic instances we have in mind, such as when $\Gamma$ consists of the ordinates of non-trivial zeros of a typical $L$-function, the sequence is independent of $T$. We introduce the dependence on the parameter because we are also interested in cases such as the ones Gonek and Ki studied in \cite{GK2018}, where, among other things, they considered the pair correlation of zeros of the real part of $\zeta(s)$ along the lines $\operatorname{Re}(s) = \frac{1}{2} - \frac{c}{\log T}$ for a fixed $c > 0$, which naturally vary with $T$. We discuss this and a few other examples in the dedicated Section \ref{sec:applications} below.

Now, sequences of non-trivial zeros of a typical $L$-function share the behavior that the number of elements with height at most $T > 0$ is asymptotically $\frac{\lambda T}{2 \pi} \log T$ and the average spacing of elements up to that height is $\frac{\lambda}{2 \pi} \log T$ for a given $\lambda > 0$. Motivated by this, we are led to our first axiom.
\smallbreak 
\noindent \textit{Assumption 1} (A1). There is a $\lambda > 0$ such that
\begin{equation}\label{eq:zerocount}
    \liminf_{T\to \infty} \, \frac{\# \{n: \gamma_n(T) \in (0,T] \} }{\frac{\lambda T}{2 \pi} \log T} = 1.
\end{equation}
\smallbreak
In view of this assumption, we associate a form factor to $\Gamma$ in analogy to Montgomery's function \eqref{eq:montgomoryffunction}:
\begin{align}\label{eq:ff}
\FF_\Gamma(\alpha) = \mathfrak{F}_\Gamma(\alpha, T):= \bigg(\frac{\lambda T}{2\pi}\log\,{T}\bigg)^{-1}\sum_{\substack{{\gamma, 
\gamma' \in (0,T] \cap \Gamma(T)  }}} T^{i\lambda \alpha (\gamma-\gamma')}w(\gamma\!-\!\gamma'),
\end{align}
where, as before, $\alpha \in \R$, $T \geq T_0$, $w(u)=\frac{4}{4+u^2}$, and elements in the sum are counted with multiplicity. The choice that the $\gamma$'s lie in the interval $(0,T]$ in \eqref{eq:zerocount} and \eqref{eq:ff} is made for simplicity and is not particularly important. The interval $(0, T]$
can be replaced by, say, $(T, 2T]$ or $[-T/2, T/2]$ with no change in the arguments presented here.
From the definition it also follows that  $\FF_\Gamma(\alpha)$ is real-valued and even, while the identity
\begin{equation*}
    \sum_{\gamma, 
\gamma' \in  (0,T] \cap \Gamma(T)}  T^{i\lambda\alpha(\gamma'-\gamma)}\,w(\gamma'\!-\!\gamma) = 2\pi \int_{-\infty}^{\infty} e^{-4\pi |u|} \Bigg| \sum_{\gamma \in (0,T] \cap \Gamma(T)}   T^{i\lambda\alpha\gamma} e^{2\pi i \gamma u} \Bigg|^2 \du\,,
\end{equation*}
which holds for all $\alpha\in \mathbb{R}$, implies that it is non-negative. 

Below we will give effective bounds for the long-term averages \begin{equation*}
\frac{1}{\ell}\int_{b}^{b+\ell}\FF_\Gamma(\alpha, T)  \, \d\alpha, \, \mbox{ as } T\rightarrow \infty,
\end{equation*}
in this very general situation, assuming  some information is known about the asymptotic behavior of $\FF_\Gamma(\alpha, T)$ as $T\to \infty$. This information will come in the form of a convergence of measures. To properly state this, we introduce a definition. We say a Borel measure $\nu$ is a \emph{limiting measure} for $\FF_\Gamma(\alpha, T)$ in $[-\Delta, \Delta]$ if, for every function $\varphi\in C_c((-\Delta, \Delta))$,
\begin{align}\label{eq:limcondition}
\lim_{T\rightarrow \infty}\int_{\mathbb{R}} \varphi(\alpha) \, \FF_\Gamma(\alpha, T)  \,\d\alpha = \int_{\mathbb{R}} \varphi(\alpha) \,\d\nu(\alpha).
\end{align} 
In the language of measure theory, this means that, as $T \to \infty$, the absolutely continuous measures with Radon--Nikodym derivative $\FF_\Gamma(\alpha, T)$ converge in the weak$^*$ sense to $\nu$ as measures on $[-\Delta, \Delta]$. Such a convergence assumption will constitute the basis of our second axiom on $\Gamma$.

\smallbreak 
\noindent \textit{Assumption 2} (A2). There is a $\Delta > 0$ and a finite Borel measure $\nu$ such that $\nu$ is a limiting measure for $\FF_{\Gamma}(\alpha, T)$ in $[-\Delta, \Delta]$.
\smallbreak
\begin{remark}
    There may be multiple Borel measures on $\R$ for which \eqref{eq:limcondition} holds for all $\varphi \in C_c((-\Delta, \Delta))$. What is relevant here is that, if they exist, they must agree when restricted to the interval $[-\Delta, \Delta]$. 
\end{remark}
As an example, when $\Gamma$ is the set of ordinates of non-trivial zeros of $\zeta(s)$, we have that $\FF_\Gamma(\alpha,T)$ is Montgomery's function $F(\alpha,T)$ and equation \eqref{eq:asymptmont} implies that, under RH, the measure $\nu$ given by
\begin{equation}\label{eq:zetameasure}
   \d\nu(\alpha)=\boldsymbol{\delta}(\alpha)+ |\alpha|\, \d\alpha
\end{equation}
is the limiting measure of $F(\alpha, T)$ in $[-1,1]$, where $\boldsymbol{\delta}$ is the Dirac measure concentrated at the origin. Other examples of form factors and their associated limiting measures can be found in Section \ref{sec:applications}. 

\subsection{Bounds via Fourier optimization}\label{EP}
We obtain lower and upper bounds for the long-term averages of form factors through a Fourier optimization approach. There are two relevant extremal problems which we study, and these will be the key to obtain the desired bounds.
 
The first problem we consider makes use of the class of functions introduced by Cohn and Elkies in  \cite{CE} to tackle sphere packing problems. For a $\Delta>0$, this is the class $\A_\Delta$ of continuous and even functions $g$ such that 
\begin{itemize}
\item[(1)] $g$ and $\widehat{g} \in L^1(\R)$;
\item[(2)] $\widehat{g}(\alpha)\leq 0$ for $\alpha \in \R\setminus [-\Delta, \Delta]$;
\item[(3)] $g(\alpha)\geq 0$ for all $\alpha\in \R$.
\end{itemize}
Now let $\nu$ be a finite Borel measure supported in $[-\Delta, \Delta]$ and define the functional 
\begin{align}\label{functional}
\Phi_\nu(g):= \int_{-\Delta}^{\Delta}\widehat{g}(\alpha)\,\d\nu(\alpha).
\end{align}
Our first  problem is the following.
\smallbreak
\noindent \emph{Extremal Problem 1 }(EP1): Given $\Delta>0$ and a finite Borel measure $\nu$ on $[-\Delta, \Delta]$. Find the quantity
\begin{align}\label{constant}
\mathbf{C}_\nu:=\inf_{\substack{{g\in \A_\Delta} \\ {g(0) > 0}}} \frac{\Phi_\nu(g)}{g(0)}.
\end{align}
If a sequence $\Gamma$ satisfies assumptions (A1) and (A2), we will show that we can provide upper and lower bounds for averages of $\FF(\alpha,T)$ in terms of the optimization constant $\mathbf{C}_\nu$ by applying the averaging mechanism of Carneiro, Milinovich, and Ramos \cite{CMR} in our generic setting, leading to our first main result.

\begin{theorem}\label{thm:boundswithextremalproblem} Let $\Gamma$ be a sequence of real numbers satisfying (A1) and (A2).
Let $\nu$ be the limiting measure for $\FF_\Gamma(\alpha, T)$ in the interval $[-\Delta, \Delta]$. Given $\varepsilon>0$, for any $b\geq 1$ and sufficiently large $\ell$, one has 
  \begin{align*}
   1+s_0(\mathbf{C}_\nu-1)-\varepsilon + o(1)< \frac{1}{\ell}\int_{b}^{b+\ell}\FF_\Gamma(\alpha, T)\, \d\alpha <\mathbf{C}_\nu +\varepsilon + o(1),
  \end{align*}
  as $T \to \infty$, with $\ell \geq \ell_0(\varepsilon)$ for the upper bound and $\ell \geq \ell_0(b,\varepsilon)$ for the lower bound.
\end{theorem}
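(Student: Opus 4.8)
The overall strategy is to translate the problem into the Fourier language set up above, distill from (A1) and (A2) the two quantitative facts one really needs, and then run the averaging machinery of \cite{CMR} in this degree of generality. First, by Fourier inversion, for every $r$ with $r,\widehat r\in L^1(\R)$,
\[
\int_{\R}\widehat r(\alpha)\,\FF_\Gamma(\alpha,T)\,\d\alpha=\Big(\tfrac{\lambda T}{2\pi}\log T\Big)^{-1}\sum_{\gamma,\gamma'\in(0,T]\cap\Gamma(T)}r\Big(\tfrac{\lambda\log T}{2\pi}(\gamma-\gamma')\Big)w(\gamma-\gamma').
\]
This yields two observations used throughout. (i) If $r(0)\ge0$, keeping only the diagonal $\gamma=\gamma'$ (there are $\#\{n:\gamma_n(T)\in(0,T]\}$ such terms, each equal to $r(0)$ since $w(0)=1$) and invoking (A1) gives $\liminf_{T\to\infty}\int_{\R}\widehat r\,\FF_\Gamma(\cdot,T)\,\d\alpha\ge r(0)$. (ii) If $\widehat r\in C_c((-\Delta,\Delta))$ then $\widehat r$ is an admissible test function for (A2), so $\int_{\R}\widehat r\,\FF_\Gamma(\cdot,T)\,\d\alpha\to\int_{\R}\widehat r\,\d\nu$; by a uniform approximation this extends to continuous $\widehat r$ supported in $[-\Delta,\Delta]$ that vanish at $\pm\Delta$, once one knows that $\int_{-\Delta}^{\Delta}\FF_\Gamma(\cdot,T)\,\d\alpha$ stays bounded.

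The link to (EP1) is the following estimate, valid for any $g\in\A_\Delta$ normalized by $g(0)=1$ (and, for the application of (ii), belonging to a suitable dense subclass with $\widehat g$ vanishing at $\pm\Delta$): splitting the integral at $\pm\Delta$, discarding the outer part from below using $\widehat g\le0$ there and $\FF_\Gamma\ge0$ (the nonnegativity recorded in the excerpt), bounding the inner part above by $\Phi_\nu(g)+\o(1)$ via (ii), and bounding the whole integral below by $1-\o(1)$ via (i), one obtains
\[
\int_{|\alpha|>\Delta}\bigl(-\widehat g(\alpha)\bigr)\,\FF_\Gamma(\alpha,T)\,\d\alpha\ \le\ \Phi_\nu(g)-1+\o(1)\qquad(T\to\infty).
\]
Taking $g$ within $\varepsilon$ of optimal in (EP1) makes the right-hand side $\le \mathbf C_\nu-1+\varepsilon+\o(1)$; letting $T\to\infty$ also shows $\mathbf C_\nu\ge1$. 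Thus the ``excess mass'' of $\FF_\Gamma$ outside $[-\Delta,\Delta]$, weighted by the nonnegative function $-\widehat g$, is uniformly bounded in terms of $\mathbf C_\nu$.

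With this in hand, the upper bound is obtained by the averaging mechanism of \cite{CMR}: one forms nonnegative combinations of translates of $\widehat g$ arranged to dominate a multiple of $\one_{[b,b+\ell]}$ while remaining $\le0$ outside $[-\Delta,\Delta]\cup[b,b+\ell]$, and converts---using $\FF_\Gamma\ge0$, (i), and the displayed bound on the excess mass---the estimate $\mathbf C_\nu-1$ into $\tfrac1\ell\int_b^{b+\ell}\FF_\Gamma(\alpha,T)\,\d\alpha<\mathbf C_\nu+\varepsilon+\o(1)$ for $\ell\ge\ell_0(\varepsilon)$, the role of large $\ell$ being to make the edge terms of the tiling negligible against the main ($\ell$-linear) term. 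The lower bound is dual: one tests $\FF_\Gamma$ against a bandlimited \emph{minorant} of $c\,\one_{[-\Delta,\Delta]}+\one_{[b,b+\ell]}$, turns it via the basic identity into a sum over nearby pairs that (i)--(ii) control, and absorbs the unavoidable oscillation of the Fourier transform of an interval through the quantity $s_0=\min_x\frac{\sin x}{x}$; together with the excess-mass bound this gives $\tfrac1\ell\int_b^{b+\ell}\FF_\Gamma(\alpha,T)\,\d\alpha>1+s_0(\mathbf C_\nu-1)-\varepsilon+\o(1)$ for $\ell\ge\ell_0(b,\varepsilon)$, the dependence on $b$ entering through the constant $c$ and the scale of the minorant of the far window.

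The step I expect to be the main obstacle is the behaviour of $\FF_\Gamma(\cdot,T)$ near the endpoints $\pm\Delta$: (A2) only provides weak$^*$ convergence tested against functions compactly supported in the \emph{open} interval, while the extremal functions of (EP1)---for instance the Cohn--Elkies functions of \cite{CE}---have transforms spread over all of $\R$ and need not vanish there. Controlling this (reducing $\mathbf C_\nu$ to a dense subclass of $\A_\Delta$ with good endpoint behaviour, carrying out the attendant monotone/limiting arguments, and checking that the error terms produced by the tiling and the minorant are genuinely $\o(\ell)$ under the order of limits ``$\ell$ large, then $T\to\infty$'') is where the real work sits; everything else is bookkeeping on top of the basic identity and the two axioms.
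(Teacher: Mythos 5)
Your sketch correctly identifies the major ingredients---the Fourier identity, the diagonal bound from (A1), the (A2) weak$^*$ convergence tested against compactly supported functions, the role of $s_0=\min_x\frac{\sin x}{x}$, the endpoint issue at $\pm\Delta$, and the CMR averaging mechanism---but the actual mechanism you describe for passing from the optimization constant $\mathbf C_\nu$ to the window average is not the one that works, and the ``excess mass'' bound you isolate is a red herring. The paper's averaging step uses the specific function $\mathcal G_L:=\widehat g * \chi_{[-L,L]}$ with $L = M+\ell/2$ (for the upper bound) and the precise inequality $\chi_{[b,b+\ell]}(x)\le \mathcal G_L(x-b-\ell/2)+\|\widehat h\|_1\chi_{b+\ell/2+I_L}(x)$; after Fourier inversion the $\mathcal G_L$ term becomes $\mathrm{Re}\sum T^{i\lambda(b+\ell/2)(\gamma-\gamma')}\,(\mathscr F^{-1}[\chi_{[-L,L]}]\cdot g)(\cdots)w(\cdots)$, and one uses $|\mathscr F^{-1}[\chi_{[-L,L]}]|\le 2L$ together with $g\ge0$, $w\ge0$ to get $\int \mathcal G_L(\cdot-b-\ell/2)\FF_\Gamma\le 2L\int \widehat g\,\FF_\Gamma$. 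That is, every modulated translate of $\widehat g$ integrates against $\FF_\Gamma$ to at most the unmodulated one---the bound depends on $g\ge0$ and not at all on how much of $\FF_\Gamma$ lives outside $[-\Delta,\Delta]$; your ``excess mass'' inequality controls $\int_{|\alpha|>\Delta}(-\widehat g)\FF_\Gamma$, but since $-\widehat g\in L^1(\R)$ it cannot be bounded below on a long window $[b,b+\ell]$, so this quantity gives no control over $\frac1\ell\int_b^{b+\ell}\FF_\Gamma$. Similarly, your upper-bound sign constraint (``$\le 0$ outside $[-\Delta,\Delta]\cup[b,b+\ell]$'') is neither achievable nor what the argument needs.

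Two further gaps: first, you rely in several places on $\int_I\FF_\Gamma(\cdot,T)\,\d\alpha\ll |I|$ uniformly in $T$ (to discard the tile-boundary error $\int_{I_L}\FF_\Gamma$ and to run the approximation argument for (A2)); this is genuinely needed and nontrivial---the paper invokes Goldston's argument \cite[eq.~(2.6)]{G}---and you treat it only as a caveat, not as an established prerequisite. Second, the lower bound in the statement is for $\int_b^{b+\ell}$ with $b\ge1$, but the $\mathcal G_L$ mechanism naturally gives a bound for the \emph{symmetric} average $\frac1{2\beta}\int_{-\beta}^\beta$; converting to the one-sided window uses the evenness of $\FF_\Gamma$ and the identity $\frac1\ell\int_b^{b+\ell}\FF_\Gamma=\frac1{2\beta}\int_{-\beta}^\beta\FF_\Gamma+\frac b\ell\bigl(\frac1{2\beta}\int_{-\beta}^\beta\FF_\Gamma-\frac1{2b}\int_{-b}^b\FF_\Gamma\bigr)$ with $\beta=b+\ell$, which requires $\ell\ge\ell_0(b,\varepsilon)$ and again the uniform boundedness of the symmetric averages. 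You do not mention this reduction, and it is where the $b$-dependence of $\ell_0$ actually comes from. The endpoint issue you flag is handled more simply than you anticipate: mollify $g$ to land in $\A_\Delta^{BL}$ and then rescale on the Fourier side by $1-\delta$ to push $\mathrm{supp}\,\widehat g$ strictly inside $(-\Delta,\Delta)$, so one never needs to extend (A2) to functions merely vanishing at the endpoints.
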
 

\begin{remark}
   By the inequalities above, we have a necessary condition for the limiting measure $\nu$ of $\FF_\Gamma(\alpha, T)$, since the solution of the problem (EP1) must then satisfy $\mathbf{C}_\nu\geq 1$.
\end{remark}

In the next subsections, we discuss how to estimate the optimization constant $\mathbf{C}_\nu$ in order to obtain effective bounds out of Theorem \ref{thm:boundswithextremalproblem}.

We can strengthen the lower bound with the aid of another extremal problem. Given $\beta>0$, we introduce the class $\mathcal R_\beta$, consisting of continuous functions $g$ such that
\begin{enumerate}[(1)]
    \item $g$ and $\widehat g \in L^1(\R)$;
    \item $\widehat{g}(\alpha) \leq \chi_{[-\beta, \beta]} (\alpha)$ for all $\alpha \in \R$; 
    \item $g(\alpha)\geq 0$ for all $\alpha\in \R$.
\end{enumerate}
This definition allows us to introduce the second problem.
\smallbreak
\noindent \emph{Extremal Problem 2 }(EP2): Given $\beta>0$, determine the value of
\begin{equation*}
    \mathbf{D}_\beta := \sup_{g\in  \mathcal R_\beta} g(0) = \sup_{g\in  \mathcal R_\beta} \int_\R \widehat g (\alpha) \, \d \alpha.
\end{equation*}

We solve this problem completely in Section \ref{sec:univlowerbound}, showing that $\mathbf{D}_\beta = \beta$ (see Proposition \ref{prop:ep2solution}). Assuming (A1) and (A2), we obtain a lower bound for (symmetric) averages of $\FF_\Gamma$ in terms of the constant $\mathbf{D}_\beta$. Notably, due to the value  $\mathbf{D}_\beta$, this yields a lower bound of $\frac{1}{2}$ for averages of $\FF_\Gamma$, which is independent of the size of the interval $[-\Delta, \Delta]$.

\begin{theorem}\label{thm:indptlowerbound} Let $\Gamma$ be a sequence of real numbers satisfying (A1) and (A2). Given $\varepsilon>0$, for any $b\geq 1$ and $\ell \geq \ell_0(b,\varepsilon)$, one has 
  \begin{align*}
    \frac{1}{\ell}\int_{b}^{b+\ell}\FF_\Gamma(\alpha, T) \, \d\alpha > \frac{1}{2} - \varepsilon + o(1),
  \end{align*}
  as $T \to \infty$.
\end{theorem}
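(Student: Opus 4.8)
\emph{Strategy.} I would deduce the estimate from two facts: a lower bound for the symmetric averages $\int_{-\beta}^{\beta}\FF_\Gamma(\alpha,T)\,\d\alpha$ coming from (EP2), and a finite upper bound for $\int_{0}^{b}\FF_\Gamma(\alpha,T)\,\d\alpha$ for each fixed $b$. Since $\int_{b}^{b+\ell}\FF_\Gamma=\int_{0}^{b+\ell}\FF_\Gamma-\int_{0}^{b}\FF_\Gamma$, the first bound controls the left-hand side from below while the second removes a bounded error, which after division by $\ell$ becomes negligible once $\ell$ is large — this is precisely why the statement only asserts the bound for $\ell\geq\ell_0(b,\varepsilon)$.

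\emph{The (EP2) bound.} I claim that for every $\beta>0$,
\[
\liminf_{T\to\infty}\,\int_{-\beta}^{\beta}\FF_\Gamma(\alpha,T)\,\d\alpha\ \geq\ \mathbf D_\beta\ =\ \beta ,
\]
the equality being Proposition \ref{prop:ep2solution}. Fix $g\in\mathcal R_\beta$. Expanding $\FF_\Gamma$ and using Fourier inversion (the evident analogue for $\FF_\Gamma$ of the identity recorded in the introduction, legitimate since $g$ is continuous with $g,\widehat g\in L^1(\R)$) gives
\[
\int_{\R}\widehat g(\alpha)\,\FF_\Gamma(\alpha,T)\,\d\alpha\;=\;\Big(\tfrac{\lambda T}{2\pi}\log T\Big)^{-1}\sum_{\gamma,\gamma'\in(0,T]\cap\Gamma(T)}g\Big((\gamma-\gamma')\tfrac{\lambda\log T}{2\pi}\Big)\,w(\gamma-\gamma') .
\]
The diagonal terms $\gamma=\gamma'$ contribute $\dfrac{2\pi\,\#\{n:\gamma_n(T)\in(0,T]\}}{\lambda T\log T}\,g(0)$ (as $w(0)=1$), which by (A1) is at least $(1-o(1))\,g(0)$ since $g(0)\geq0$; the remaining terms are $\geq0$ because $g\geq0$ and $w>0$. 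On the other hand $\widehat g\leq\chi_{[-\beta,\beta]}$ and $\FF_\Gamma\geq0$ force $\int_{\R}\widehat g\,\FF_\Gamma\leq\int_{-\beta}^{\beta}\FF_\Gamma$. Hence $\liminf_{T}\int_{-\beta}^{\beta}\FF_\Gamma\geq g(0)$, and taking the supremum over $g\in\mathcal R_\beta$ (equivalently, using the Fej\'er function $g(x)=\beta\big(\tfrac{\sin\pi\beta x}{\pi\beta x}\big)^{2}\in\mathcal R_\beta$) yields the display. Since $\FF_\Gamma$ is even, $\int_{-\beta}^{\beta}\FF_\Gamma=2\int_{0}^{\beta}\FF_\Gamma$, so $\liminf_{T}\int_{0}^{\beta}\FF_\Gamma\geq\beta/2$ for all $\beta>0$.

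\emph{Boundedness of $\int_{0}^{b}\FF_\Gamma$ and conclusion.} Pick $\Delta'\in(0,\Delta)$ and a nonnegative $\varphi\in C_c((-\Delta,\Delta))$ with $\varphi\geq\chi_{[0,\Delta']}$; then $\FF_\Gamma\geq0$ and (A2) give $\limsup_{T}\int_{0}^{\Delta'}\FF_\Gamma\leq\int\varphi\,\d\nu<\infty$. For the remaining piece $\int_{\Delta'}^{b}\FF_\Gamma$ I would invoke the upper-bound half of Theorem \ref{thm:boundswithextremalproblem}, or rather the estimates in its proof: the Cohn--Elkies class $\A_\Delta$ together with the averaging mechanism of \cite{CMR} shows that $\limsup_{T\to\infty}\int_{c}^{d}\FF_\Gamma(\alpha,T)\,\d\alpha<\infty$ for all $0<c<d$. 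Thus $\limsup_{T}\int_{0}^{b}\FF_\Gamma\leq M(b)$ for some finite $M(b)$. Combining with the (EP2) bound applied at $\beta=b+\ell$,
\[
\liminf_{T\to\infty}\,\frac1\ell\int_{b}^{b+\ell}\FF_\Gamma(\alpha,T)\,\d\alpha\ \geq\ \frac1\ell\Big(\frac{b+\ell}{2}-M(b)\Big)\ \geq\ \frac12-\varepsilon
\]
once $\ell\geq\ell_0(b,\varepsilon)$ (e.g.\ $\ell_0(b,\varepsilon)=M(b)/\varepsilon$); the strict inequality in the statement follows by running this with $\varepsilon/2$ in place of $\varepsilon$.

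\emph{Main obstacle.} The (EP2) step is quick, and its whole force is that the sharp value $\mathbf D_\beta=\beta$ makes the resulting constant $\tfrac12$ independent of $\Delta$ and of $\nu$. The substance is the boundedness of $\int_{0}^{b}\FF_\Gamma$: assumption (A1) alone is far too weak (excessive ``coherence'' of the $\gamma$'s would inflate this integral), and what rescues the argument is that (A2) forbids such coherence near the origin while the Cohn--Elkies/\cite{CMR} estimates propagate the control to compact subsets of $(0,\infty)$. A minor technical point is reconciling the interval thresholds: for $b<\Delta$ only (A2) is needed, whereas when $\Delta<1$ the window $[\Delta,1]$ must be covered by the form of the Theorem \ref{thm:boundswithextremalproblem} estimates valid for left endpoints below $1$.
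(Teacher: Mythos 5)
Your proof is correct and follows essentially the same route as the paper: establish the sharp symmetric lower bound via (EP2) and Poisson-summation-type positivity, then remove the bounded contribution from $[0,b]$. Two small remarks. First, the diagonal contribution is $\left(\frac{\lambda T}{2\pi}\log T\right)^{-1}g(0)\sum_{\gamma}m_\gamma$, which is $\geq \left(\frac{\lambda T}{2\pi}\log T\right)^{-1}g(0)\,\#\{n:\gamma_n\in(0,T]\}$ rather than exactly equal to it when multiplicities occur; the inequality is what you need, so this is harmless. Second, the source of the bound $\limsup_{T}\int_{c}^{d}\FF_\Gamma(\alpha,T)\,\d\alpha<\infty$ is not really the Cohn--Elkies class or the full averaging mechanism of Theorem~\ref{thm:boundswithextremalproblem}; it is the much more elementary Goldston-type Fej\'er-kernel argument (the paper cites \cite[eq.\ (2.6)]{G}): testing against a translated and dilated Fej\'er kernel $r_c(u)=\Delta\left(\frac{\sin\pi\Delta u}{\pi\Delta u}\right)^2 e^{2\pi i c u}$, using that the resulting double sum is dominated termwise by the centered one, and that $\widehat{r_c}\geq\tfrac12\chi_{[c-\Delta/2,\,c+\Delta/2]}$, shows $\int_{c-\Delta/2}^{c+\Delta/2}\FF_\Gamma\leq 2\int_{-\Delta}^{\Delta}\FF_\Gamma\ll 1$ uniformly in $c$, and hence $\int_{-\beta}^{\beta}\FF_\Gamma\ll\beta$. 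This is exactly the uniform boundedness the paper invokes before applying its identity \eqref{eq:4}, which is just an algebraic repackaging of your decomposition $\int_b^{b+\ell}=\int_0^{b+\ell}-\int_0^b$. In line with the paper's remark, this also makes clear that (A2) is used only through the finiteness of $\limsup_T\int_{-\Delta}^{\Delta}\FF_\Gamma$, which your $C_c$-bump step already isolates.
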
 

\begin{remark}
Our proof of the above theorem shows that one does not need to assume (A2) to obtain an analogous result for averages over symmetric intervals (see Lemma \ref{lem:lowerboundep}). Not only that, one can also see that Theorem \ref{thm:indptlowerbound} is valid under the weaker assumption that there is a $\Delta > 0$ for which $\limsup\limits_{T\to \infty}  \int_{-\Delta}^{\Delta}\FF_\Gamma(\alpha, T) \, \d\alpha$ is finite.
\end{remark}

\subsection{Reproducing kernel Hilbert spaces}
For a given measure $\nu$, finding the exact value of the constant $\mathbf{C}_\nu$ is not an easy task.  We will use reproducing kernel Hilbert space theory to obtain a bound for $\mathbf{C}_\nu$ by working over a subclass of $\mathcal{A}_\Delta$. 
We introduce the relevant background briefly.

A Hilbert space of entire functions $\mathcal{H}$ for which the functionals
\begin{equation*}
    f \mapsto f(w) \in \C
\end{equation*}
are continuous for all $w \in \C$ is said to be a reproducing kernel Hilbert space because, as a consequence of the Riesz representation theorem, there exists a function $K: \C^2 \to \C$ such that $K(w, \cdot) \in \mathcal H$ and
\begin{equation*}
    f(w) = \langle f, K(w, \cdot) \rangle,
\end{equation*}
for all $f \in \mathcal H$ and $w \in \C$. This $K$ is called the reproducing kernel of $\mathcal H$.

A classical example of such a space is the Paley--Wiener space $PW(\pi \Delta)$ for a $\Delta > 0$. We recall that a function $f: \C \to \C$ is said to be of exponential type at most $\pi \Delta$ if for a given $\varepsilon > 0$ there exists a constant $C_\varepsilon$ such that
\begin{equation*}
    |f(z)| \leq C_\varepsilon e^{ (\pi \Delta + \varepsilon)|z|} 
\end{equation*}
holds for all $z \in \C$. The space $PW(\pi \Delta)$ consists of all entire functions $f: \C \to \C$ of exponential type at most $\pi \Delta$ such that $f|_{\R} \in L^2(\R)$. This forms a reproducing kernel Hilbert space with the usual $L^2(\R)$-inner product. We note that the Paley--Wiener theorem establishes that the set of functions in $L^2(\R)$ which can be extended to a function in $PW(\pi \Delta)$ coincides with the $L^2(\R)$-functions whose Fourier transforms are supported in $[-\frac{\Delta}{2}, \frac{\Delta}{2}]$.

Here we will be interested in weighted versions of $PW(\pi \Delta)$. Given a finite Borel measure $\nu$ supported in $[-\Delta, \Delta]$, we consider the set $\mathcal H_{\nu}$ consisting of entire functions of exponential type at most $\pi \Delta$ satisfying
\begin{equation*}
    \| f\|_{\nu}^2 := \int_\R |f(x)|^2 \, \widehat{\nu}(x) \, \d x < \infty.
\end{equation*}
It is not clear a priori that the functional above is a norm and that $\mathcal{H}_\nu$ is even a vector space. One must have specific information about the measure $\nu$ to be able to say anything more concrete. Therefore, we restrict our attention to a class of measures which addresses many interesting cases relevant from a number theory standpoint, namely, the measures $\nu$ supported in $[-\Delta, \Delta]$ given by 
\begin{equation}\label{eq:classofmeasures}
     \d\nu(\alpha)= c_1\boldsymbol{\delta}(\alpha)+c_2|\alpha|e^{-c_3|\alpha|}\d\alpha,
\end{equation} 
for $\alpha \in [-\Delta, \Delta]$, where $c_1 > 0$ and $c_2, c_3 \geq 0$ are constants.
In these cases, we determine that the sets $\mathcal H_{\nu}$ satisfy the following.
\begin{theorem}\label{thm:rkhs}
     Let $\nu$ be a measure in $[-\Delta, \Delta]$ given by $\d\nu(\alpha)= c_1\boldsymbol{\delta}(\alpha)+c_2|\alpha|e^{-c_3|\alpha|}\d\alpha$. For $\frac{c_2}{c_1}\Delta^2\leq 5/3$ and $c_3\geq 0$, we have that $\| \cdot\|_{\nu}$ is a norm which makes the set $\mathcal H_{\nu}$ a reproducing kernel Hilbert space. Moreover, we have that $\mathcal H_{\nu} = PW(\pi \Delta)$ as sets, and we have the equivalence of norms
     \begin{equation}\label{eq:equivalentnorm}
        a \| f\|_{L^2(\R)} \leq \| f\|_{\nu} \leq b \| f\|_{L^2(\R)} ,
     \end{equation}
     for constants $a,b > 0$ depending only on the parameters $c_1, c_2$, and $\Delta$.
\end{theorem}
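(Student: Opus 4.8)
The plan is to reduce the whole statement to a two-sided pointwise estimate on the weight $\widehat\nu$: I will show that there are constants $0<a\le b<\infty$, depending only on $c_1,c_2,\Delta$, with
\[
a^2\le\widehat\nu(x)\le b^2\qquad\text{for every }x\in\R.
\]
Granting this, everything else is soft. Since $\|f\|_\nu^2=\int_\R|f(x)|^2\,\widehat\nu(x)\,\dx$, the estimate gives at once $a^2\|f\|_{L^2(\R)}^2\le\|f\|_\nu^2\le b^2\|f\|_{L^2(\R)}^2$, which is \eqref{eq:equivalentnorm}; in particular $\|f\|_\nu<\infty$ if and only if $\|f\|_{L^2(\R)}<\infty$, and as both $\mathcal H_\nu$ and $PW(\pi\Delta)$ consist of entire functions of exponential type at most $\pi\Delta$, this yields $\mathcal H_\nu=PW(\pi\Delta)$ as sets. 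The form $\langle f,g\rangle_\nu:=\int_\R f\overline g\,\widehat\nu\,\dx$ is then a genuine inner product ($\widehat\nu>0$ forces $\|f\|_\nu=0\Rightarrow f\equiv0$), so $\|\cdot\|_\nu$ is a norm, and by the Paley--Wiener theorem $PW(\pi\Delta)$ is a closed subspace of $L^2(\R)$, hence complete, and stays complete under the equivalent norm $\|\cdot\|_\nu$. Finally, the classical bound $|f(w)|\le\sqrt\Delta\,e^{\pi\Delta|\operatorname{Im}w|}\,\|f\|_{L^2(\R)}\le a^{-1}\sqrt\Delta\,e^{\pi\Delta|\operatorname{Im}w|}\,\|f\|_\nu$ shows each point evaluation is $\|\cdot\|_\nu$-continuous, so $(\mathcal H_\nu,\langle\cdot,\cdot\rangle_\nu)$ is a reproducing kernel Hilbert space.

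To prove the two-sided estimate, compute, using that $\nu$ is even,
\[
\widehat\nu(x)=c_1+2c_2\int_0^\Delta\alpha\,e^{-c_3\alpha}\cos(2\pi x\alpha)\,\d\alpha=:c_1+W(x),
\]
with $W$ real, continuous and bounded. The upper bound is immediate from $W(x)\le W(0)=2c_2\int_0^\Delta\alpha e^{-c_3\alpha}\,\d\alpha\le c_2\Delta^2$, so one takes $b^2=c_1+c_2\Delta^2$. For the lower bound I would first handle the case $c_3=0$, where an explicit integration gives $W(x)=c_2\Delta^2\,h(2\pi x\Delta)$ with $h(t):=2\bigl(t\sin t+\cos t-1\bigr)/t^2$; everything then hinges on the one-variable fact that $m_0:=\inf_{t>0}h(t)$ satisfies $-\tfrac35<m_0<0$. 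For general $c_3\ge0$ I would reduce to this via the nonnegative superposition $e^{-c_3\alpha}\one_{[0,\Delta]}(\alpha)=e^{-c_3\Delta}\one_{[0,\Delta]}(\alpha)+\int_0^\Delta c_3e^{-c_3 s}\one_{[0,s]}(\alpha)\,\d s$: applying the identity $\int_0^\sigma\alpha\cos(2\pi x\alpha)\,\d\alpha=\tfrac{\sigma^2}{2}h(2\pi x\sigma)\ge\tfrac{\sigma^2}{2}m_0$ on each slab and recombining with $e^{-c_3\Delta}\Delta^2+\int_0^\Delta c_3e^{-c_3 s}s^2\,\d s=2\int_0^\Delta s e^{-c_3 s}\,\d s$ gives $W(x)\ge m_0\,W(0)\ge m_0c_2\Delta^2$. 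Hence, using $\tfrac{c_2}{c_1}\Delta^2\le\tfrac53$ and $m_0>-\tfrac35$,
\[
\widehat\nu(x)\ge c_1+m_0c_2\Delta^2\ge c_1\Bigl(1+\tfrac53 m_0\Bigr)>0,
\]
and one takes $a^2=c_1(1+\tfrac53 m_0)$.

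The main obstacle is the calculus lemma $\inf_{t>0}h(t)>-\tfrac35$, equivalently $\phi(t):=t\sin t+\cos t-1+\tfrac{3}{10}t^2>0$ for all $t>0$; this is exactly where the constant $5/3$ in the hypothesis enters (the true infimum is $\approx-0.587$, leaving a small margin). I would prove it by observing $\phi'(t)=t\bigl(\cos t+\tfrac35\bigr)$, so $\phi$ is piecewise monotone on the intervals delimited by $t=0$ and the solutions of $\cos t=-\tfrac35$; since $\phi(t)\ge\tfrac{3}{10}t^2-t-2\to+\infty$ handles the tail, it remains to check $\phi>0$ at the finitely many interior local minima, which comes down to two explicit evaluations. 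The remaining ingredients --- the superposition reduction in $c_3$, the Paley--Wiener completeness, and the estimate for point evaluations --- are all routine once this lemma is in hand.
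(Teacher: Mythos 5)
Your proof is correct, and it reaches the same lower bound $\widehat\nu(x)\ge c_1 - |m_0|\,c_2\Delta^2$ as the paper, but by a genuinely different route for the key step. The paper observes that the function it needs to bound is the real part of $\frac{e^z(1-z)}{z^2}-\frac{1}{z^2}$ with $z=-\sigma+it$, and invokes the maximum principle on quarter-disks: the boundary contributions decay as $R\to\infty$, the values on the negative real axis are nonpositive, so the supremum is pinned to the imaginary axis $\sigma=0$, where the one-variable extremization $\sup_{t>0}\frac{2-2\cos t-2t\sin t}{t^2}=0.586\ldots$ appears. Your argument avoids complex analysis entirely: writing $e^{-c_3\alpha}\one_{[0,\Delta]}(\alpha)=e^{-c_3\Delta}\one_{[0,\Delta]}(\alpha)+\int_0^\Delta c_3 e^{-c_3 s}\one_{[0,s]}(\alpha)\,\ds$ expresses the $c_3>0$ weight as a nonnegative mixture of the $c_3=0$ weights at all scales $\sigma\le\Delta$, and the scale-invariant inequality $\int_0^\sigma\alpha\cos(2\pi x\alpha)\,\d\alpha\ge \tfrac{\sigma^2}{2}m_0$ then propagates the $c_3=0$ bound through the mixture, with the integration-by-parts identity $e^{-c_3\Delta}\Delta^2+\int_0^\Delta c_3 e^{-c_3 s}s^2\,\ds=2\int_0^\Delta s\,e^{-c_3 s}\,\ds$ recombining things cleanly into $W(x)\ge m_0 W(0)\ge m_0 c_2\Delta^2$. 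Both arguments rest on the same one-variable infimum $m_0=\inf_{t>0}h(t)\approx-0.587>-3/5$; your superposition is more elementary and perhaps easier to audit, while the paper's harmonic-function viewpoint is shorter to write down once one sees it. One small slip: after the tail cutoff $t>\bigl(1+\sqrt{3.4}\bigr)/0.6\approx4.74$ the only interior local minimum of $\phi(t)=t\sin t+\cos t-1+\tfrac{3}{10}t^2$ in the remaining range is the single point where $\cos t=-\tfrac35$, $\sin t=-\tfrac45$, $t\approx4.07$ (giving $\phi\approx0.11$), so it is one evaluation rather than two; this does not affect the conclusion.
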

To establish this theorem, we find bounds from above and below to the function $\widehat{\nu}(x)$, which introduces the need for the constraint that $\frac{c_2}{c_1}\Delta^2\leq 5/3$. This is a sufficient condition on the parameters to obtain this result, but it is (almost) the best that can be done using this method. We can slightly extend the range by increasing the technicality of the proof, but opt for the value $5/3$ for simplicity and clarity of exposition. 
 
By working over the subclass of $\mathcal{A}_\Delta$ consisting of functions $f$ with $\supp \widehat{f} \subseteq [-\Delta, \Delta]$, we can take advantage of the reproducing kernel structure of $\mathcal{H}_\nu$ to obtain an upper bound for $\mathbf{C}_\nu$. This restricted version of (EP1) was first solved by Montgomery and Taylor \cite{M2} in the case of the measure given by \eqref{eq:zetameasure}, i.e., the limiting measure associated with zeros of the Riemann zeta function. They used a variational method, while Carnerio, Chandee, Milinovich, and Littmann \cite{CCLM} used reproducing kernel Hilbert space theory to arrive at the same result. We adapt the proof of \cite[Corollary 14]{CCLM} to our context, establishing the lemma below.
\begin{lemma}\label{lem:rkbound}
     Let $\nu$ be a finite Borel measure supported in $[-\Delta, \Delta]$ such that the inequalities \eqref{eq:equivalentnorm} hold. Then $(\mathcal H_{ \nu}, \| \cdot \|_{\nu})$ is a reproducing kernel Hilbert space such that $\mathcal{H}_\nu = PW(\pi \Delta)$ as sets and
    \begin{equation*}
         \mathbf{C}_\nu \leq \frac{1}{K_\nu(0,0)},
    \end{equation*}
    where $K_\nu$ is its reproducing kernel.
\end{lemma}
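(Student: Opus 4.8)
The plan is to mimic the reproducing kernel argument of \cite[Corollary 14]{CCLM}, restricting attention to the subclass of $\mathcal{A}_\Delta$ consisting of functions whose Fourier transform is supported in $[-\Delta,\Delta]$. Start with an arbitrary admissible $g$ in this subclass with $g(0)>0$. Since $g \geq 0$ on $\R$ and $g \in L^1(\R)$, the function $g$ has a ``square root'' in a suitable sense: by the Fejér–Riesz / Krein factorization of nonnegative integrable functions with spectrum in $[-\Delta,\Delta]$, one can write $g = |f|^2$ on $\R$ for some entire $f$ of exponential type at most $\pi\Delta$ with $f|_\R \in L^2(\R)$, i.e. $f \in PW(\pi\Delta)$. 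Then $\widehat{g} = \widehat{f} * \widehat{\overline{f(\bar\cdot)}}$ is supported in $[-\Delta,\Delta]$ (consistent with condition (2) being forced to equality outside, since $\widehat{g}\le 0$ there and $g\ge0$ forces $\supp\widehat g \subseteq [-\Delta,\Delta]$ up to our restriction), and the functional $\Phi_\nu(g)$ becomes
\begin{equation*}
    \Phi_\nu(g) = \int_{-\Delta}^{\Delta} \widehat{g}(\alpha)\,\d\nu(\alpha) = \int_{-\Delta}^{\Delta} \bigl(\widehat f * \widetilde{\widehat f}\,\bigr)(\alpha)\,\d\nu(\alpha),
\end{equation*}
which, after unwinding the convolution and the definition of $\widehat\nu$, equals $\|f\|_\nu^2 = \int_\R |f(x)|^2 \widehat\nu(x)\,\d x$. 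Meanwhile $g(0) = |f(0)|^2$.

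Thus the restricted version of (EP1) asks for $\inf \|f\|_\nu^2 / |f(0)|^2$ over $f \in PW(\pi\Delta)$ with $f(0) \neq 0$; equivalently, $\sup |f(0)|^2 / \|f\|_\nu^2$, which is exactly the squared norm of the point-evaluation functional $f \mapsto f(0)$ on the Hilbert space $(\mathcal{H}_\nu, \|\cdot\|_\nu)$. By Theorem \ref{thm:rkhs} (applicable precisely because the hypothesis \eqref{eq:equivalentnorm} is assumed), $\mathcal{H}_\nu$ is a reproducing kernel Hilbert space equal to $PW(\pi\Delta)$ as a set, so this operator norm is $K_\nu(0,0)$ by the standard identity $\|f(0)\|^2 \leq K_\nu(0,0)\|f\|_\nu^2$ with equality attained by $f = K_\nu(0,\cdot)$. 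Therefore the restricted infimum equals $1/K_\nu(0,0)$, and since enlarging the feasible set (from the restricted subclass to all of $\mathcal{A}_\Delta$) can only decrease the infimum, we conclude $\mathbf{C}_\nu \leq 1/K_\nu(0,0)$.

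The main obstacle is the factorization step $g = |f|^2$: one must verify that a nonnegative $g \in L^1(\R)$ with $\widehat g$ supported in $[-\Delta,\Delta]$ (which, given continuity of $g$ and $g\geq 0$, follows from condition (2)) does admit an outer factor $f \in PW(\pi\Delta)$, and that the bandlimited entire extension has exponential type exactly at most $\pi\Delta$ rather than $2\pi\Delta$. This is where one invokes the classical theory of bandlimited nonnegative functions (the analogue of the Fejér–Riesz theorem for the Paley–Wiener class, as used in \cite{CCLM}); care is needed with possible zeros of $g$ on the real line, but the outer function construction handles this, and the finiteness $\|f\|_\nu^2 < \infty$ is guaranteed by \eqref{eq:equivalentnorm} together with $f \in L^2(\R)$. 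A secondary check is the identity $\Phi_\nu(g) = \|f\|_\nu^2$: expanding $\widehat{|f|^2} = \widehat f * \widehat{f(-\bar\cdot)}^{\,*}$ and pairing against $\nu$, Fubini and the definition $\widehat\nu(x) = \int e^{-2\pi i x\alpha}\d\nu(\alpha)$ give $\int_\R \widehat\nu(x)|f(x)|^2\,\d x$ after a change of variables; the support condition on $\widehat g$ ensures the pairing only sees $\nu|_{[-\Delta,\Delta]}$, so it is well-defined and independent of the choice of extension of $\nu$ beyond $[-\Delta,\Delta]$.
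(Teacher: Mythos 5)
Your proposal is correct and follows essentially the same route as the paper: restrict to the subclass of $\mathcal{A}_\Delta$ with $\supp\widehat g\subseteq[-\Delta,\Delta]$, factor $g=|f|^2$ with $f\in PW(\pi\Delta)$ via Krein's theorem, convert $\Phi_\nu(g)$ to $\|f\|_\nu^2$ by Plancherel, and invoke the reproducing kernel identity $|f(0)|^2\le K_\nu(0,0)\|f\|_\nu^2$. One minor point: the RKHS structure of $(\mathcal H_\nu,\|\cdot\|_\nu)$ follows directly from the assumed norm equivalence \eqref{eq:equivalentnorm} (as the paper notes), rather than via Theorem \ref{thm:rkhs}, whose hypotheses are more restrictive than those of this lemma.
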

This is the link between explicit bounds and reproducing kernels which underpins our strategy. We include a short proof of this lemma in Section \ref{sec:rkhs}.

\subsection{Explicit bounds for averages}
As we have just seen, one can produce an effective bound for $\mathbf{C}_\nu$ in the case that $(\mathcal H_{ \nu}, \| \cdot \|_{\nu})$ is a reproducing kernel Hilbert space if one knows the value of the reproducing kernel $K_\nu(w,z)$ at $(w,z) = (0,0)$. Accordingly, the next results concern the value of $K_\nu(0,0)$, separated in two cases because of the significant increase in computational difficulty when $c_3 \neq 0$.
\begin{theorem}\label{thm:k00firstcase}
    Let $\Delta>0,\, c_1> 0,\, c_2\geq 0$ be such that $\frac{c_2}{c_1}\Delta^2 \leq 5/3$. For the measure $\nu$ supported in $[-\Delta, \Delta]$ given by $\d\nu(\alpha)= c_1\boldsymbol{\delta}(\alpha) + c_2|\alpha|\,\d\alpha$, we have
    \begin{equation*}
        K_\nu(0,0) =  \sqrt{ \frac{2 }{c_1 c_2}} \frac{\sin{\left(\sqrt{\frac{c_2}{2c_1}} \Delta \right)}}{\cos{\left( \sqrt{\frac{c_2}{2c_1}} \Delta \right)}+\sqrt{\frac{c_2}{2c_1}} \Delta \sin{\left(\sqrt{\frac{c_2}{2c_1}} \Delta \right)}}.
    \end{equation*}
    \end{theorem}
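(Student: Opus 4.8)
The plan is to exploit the variational description of the diagonal of a reproducing kernel: by Theorem~\ref{thm:rkhs}, $(\mathcal H_\nu,\|\cdot\|_\nu)$ is a reproducing kernel Hilbert space equal to $PW(\pi\Delta)$ as a set, so
\[
\frac{1}{K_\nu(0,0)}=\inf_{\substack{f\in\mathcal H_\nu\\ f(0)\neq 0}}\frac{\|f\|_\nu^2}{|f(0)|^2},
\]
and the strategy is to reduce this infimum to an explicitly solvable one-dimensional minimisation. Since $\nu$ is real and even, the reproducing kernel $K_\nu(0,\cdot)$ is real-valued on $\R$ and even, so the infimum is attained at a real and even function, and it suffices to minimise over real, even $f\in PW(\pi\Delta)$. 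For such an $f$ write $h=\widehat f$, which is a real, even element of $L^2(\R)$ supported in $[-\Delta/2,\Delta/2]$; then $f(0)=\int_\R h(\xi)\,\d\xi$.

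The first step is to rewrite $\|f\|_\nu^2$ entirely in terms of $h$. Since $|f|^2\in L^1(\R)$, the multiplication formula gives $\|f\|_\nu^2=\int_\R|f(x)|^2\,\widehat\nu(x)\,\dx=\int_{[-\Delta,\Delta]}\widehat{|f|^2}(\alpha)\,\d\nu(\alpha)$, and $\widehat{|f|^2}=h*h$ because $h$ is real and even. One has $(h*h)(0)=\|h\|_{L^2(\R)}^2=\|f\|_{L^2(\R)}^2$, while the substitution $\alpha\mapsto t+u$ together with the evenness of $h$ yields
\[
\int_{-\Delta}^{\Delta}|\alpha|\,(h*h)(\alpha)\,\d\alpha=\iint_{\R^2}\max\{|t|,|u|\}\,h(t)h(u)\,\d t\,\d u .
\]
Using the identity $\max\{|t|,|u|\}=\tfrac{\Delta}{2}-\int_0^{\Delta/2}\one_{\{|t|\le s\}}\one_{\{|u|\le s\}}\,\ds$, valid on the support of $h(t)h(u)$, this double integral becomes $\tfrac{\Delta}{2}f(0)^2-\int_0^{\Delta/2}y(s)^2\,\ds$ with $y(s):=\int_{-s}^{s}h$. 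Recording that $\|h\|_{L^2(\R)}^2=\tfrac12\int_0^{\Delta/2}y'(s)^2\,\ds$, that $y(0)=0$ and $y(\Delta/2)=f(0)$, and writing $a=\Delta/2$, we arrive at
\[
\|f\|_\nu^2=\frac{c_1}{2}\int_0^{a}y'(s)^2\,\ds+c_2\,a\,y(a)^2-c_2\int_0^{a}y(s)^2\,\ds ,
\]
and as $f$ runs over the real, even elements of $\mathcal H_\nu$, the function $y$ runs over all of $\{y\in H^1([0,a]):y(0)=0\}$.

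It remains to minimise the last display over $\{y(0)=0,\ y(a)=1\}$; the minimum is precisely $1/K_\nu(0,0)$. The hypothesis $\tfrac{c_2}{c_1}\Delta^2\le 5/3$ — comfortably below the value $2\pi^2$ at which $\tfrac{c_1}{2}\int_0^{a}\varphi'^2-c_2\int_0^{a}\varphi^2$ first fails to be positive on nonzero $\varphi\in H_0^1([0,a])$ — makes the functional strictly convex and coercive on that affine set, so its minimiser is the unique solution of the Euler--Lagrange equation $y''+\tfrac{2c_2}{c_1}y=0$ with $y(0)=0$, $y(a)=1$, namely $y(s)=\sin(\omega s)/\sin(\omega a)$ with $\omega=\sqrt{2c_2/c_1}$; note $\omega a=\sqrt{c_2/(2c_1)}\,\Delta\le\sqrt{5/6}<1$, so $\sin(\omega a)$ and $\cos(\omega a)$ are positive. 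Using the equation to integrate by parts gives $\tfrac{c_1}{2}\int_0^{a}y'^2-c_2\int_0^{a}y^2=\tfrac{c_1}{2}y(a)y'(a)$, whence the minimum equals $\tfrac{c_1\omega}{2}\cot(\omega a)+c_2 a$. Since $\tfrac{c_1\omega}{2}=\sqrt{c_1c_2/2}$ and $c_2 a=\sqrt{c_1c_2/2}\cdot\omega a$, this rearranges to
\[
\frac{1}{K_\nu(0,0)}=\sqrt{\tfrac{c_1c_2}{2}}\;\frac{\cos(\omega a)+\omega a\,\sin(\omega a)}{\sin(\omega a)},
\]
which is the asserted formula after inverting and recalling $\omega a=\sqrt{c_2/(2c_1)}\,\Delta$. (The boundary case $c_2=0$ reduces to $PW(\pi\Delta)$ with the $L^2$ norm scaled by $c_1$, giving $K_\nu(0,0)=\Delta/c_1$, the $\omega\to 0$ limit of the formula.)

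The step I expect to demand the most care is the reduction to the one-dimensional problem: justifying the multiplication and convolution identities for $f\in L^2(\R)$, the passage through the layer-cake identity, and — above all — verifying that the resulting variational problem is a genuine, well-posed convex minimisation whose value is $1/K_\nu(0,0)$. This is exactly where the constraint $\tfrac{c_2}{c_1}\Delta^2\le 5/3$ inherited from Theorem~\ref{thm:rkhs} is used. Once the reduction is in place, the Sturm--Liouville computation is entirely routine.
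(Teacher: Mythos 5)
Your proof is correct, and it takes a genuinely different route from the paper's. The paper derives $K_\nu(0,0)$ (in fact the full kernel $K_\nu(w,z)$, via Theorem~\ref{thm:fullexpressionkwz} and Lemma~\ref{lem:functionalodesimple}) by setting up the convolution equation $(u_w * \nu)(\xi) = e^{-2\pi i w\xi}$ on $[-\Delta/2,\Delta/2]$, establishing existence/uniqueness through the Fredholm alternative for the resulting Hilbert--Schmidt operator, differentiating twice to reduce to the constant-coefficient ODE $c_1 u'' + 2c_2 u = -4\pi^2 w^2 e^{-2\pi i w\xi}$ with boundary data obtained by re-substituting, solving explicitly, and then inverting the Fourier transform. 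You instead start from the variational characterization $1/K_\nu(0,0) = \inf \|f\|_\nu^2/|f(0)|^2$, justify the reduction to real even $f$ (valid since $\widehat\nu$ is real and even, so $K_\nu(0,\cdot)$ is real entire and even and one can split any competitor into real/imaginary and even/odd orthogonal parts), pass to the Fourier side via $\widehat{|f|^2}=h*h$ and the evenness identity $\iint|t+u|h(t)h(u)=\iint\max\{|t|,|u|\}h(t)h(u)$, then use the layer-cake representation of $\max\{|t|,|u|\}$ to collapse to a one-dimensional Sturm--Liouville minimization whose Euler--Lagrange equation $y''+\tfrac{2c_2}{c_1}y=0$ coincides (up to re-normalization) with the paper's ODE. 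Your route is more self-contained and avoids the Fredholm machinery, and it makes the role of the constraint on $\tfrac{c_2}{c_1}\Delta^2$ transparent (it guarantees positivity of $\tfrac{c_1}{2}\int\varphi'^2-c_2\int\varphi^2$ on $H_0^1$, with genuine threshold $2\pi^2$); on the other hand, it only yields the diagonal value $K_\nu(0,0)$, whereas the paper's integral-equation method produces the entire reproducing kernel, which they also state. All intermediate steps (multiplication formula, $(h*h)(0)=\|h\|_{L^2}^2$, $\|h\|_{L^2}^2=\tfrac12\int_0^a y'^2$, the integration by parts giving $\tfrac{c_1}{2}y(a)y'(a)$, and the final algebra turning $\tfrac{c_1\omega}{2}\cot(\omega a)+c_2 a$ into the stated formula) check out, as does the $c_2\to 0$ limit.
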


    \noindent Now, to state the result when $c_3 \neq 0$, we define the auxiliary functions of $\eta \in \C$
    \begin{align}
        A(\eta)&:=1+\frac{c_2}{c_1}\int_{-\Delta/2}^{\Delta/2}\cosh{(\eta\alpha)}\lvert \alpha\rvert e^{-c_3\lvert\alpha\rvert}\,\d\alpha;\label{eq:auxiliaryA}\\
        B(\eta)&:=\eta^2+2\frac{c_2}{c_1}-c_3^2+2\frac{c_2 c_3}{c_1}\int_{-\Delta/2}^{\Delta/2}\cosh{(\eta\alpha)}e^{-c_3\lvert\alpha\rvert}\,\d\alpha.\label{eq:auxiliaryB}
    \end{align}
    \begin{theorem}\label{thm:k00secondcase}
        Let $\Delta, c_1, c_2,\text{ and } c_3$ be positive numbers such that $\frac{c_2}{c_1}\Delta^2\leq 5/3$. For the measure $\nu$ supported in $[-\Delta, \Delta]$ given by $\d\nu(\alpha)=c_1\boldsymbol{\delta}(\alpha)+c_2\lvert \alpha\rvert e^{-c_3\lvert \alpha\rvert}\d\alpha$, we have 
        \begin{align}
                \nonumber K_\nu(0, 0)=&\frac{2}{A(\eta_1)B(\eta_2)-B(\eta_1)A(\eta_2)}\left(\frac{1}{c_1}-\frac{A(0)c_3^2}{2c_2+c_3^2 c_1}\right)\left(\frac{\sinh{\frac{\Delta \eta_1}{2}}}{\eta_1}B(\eta_2)-\frac{\sinh{\frac{\Delta \eta_2}{2}}}{\eta_2}B(\eta_1)\right)\\
                \nonumber &+\frac{2}{A(\eta_1)B(\eta_2)-B(\eta_1)A(\eta_2)}\left(\frac{c_3^2}{c_1}+\frac{B(0)c_3^2}{2c_2+c_3^2 c_1}\right)\left(\frac{\sinh{\frac{\Delta \eta_1}{2}}}{\eta_1}A(\eta_2)-\frac{\sinh{\frac{\Delta \eta_2}{2}}}{\eta_2}A(\eta_1)\right)\\
                &\label{eq:k00exp}+\frac{\Delta c_3^2}{2c_2+c_3^2 c_1},
            \end{align}
    where the numbers $\eta_1$ and $\eta_2$ are roots of the equation 
    \begin{equation*}
        c_1\eta^4 +2\left({c_2}- c_1 c_3^2\right)\eta^2+c_3^2\left(2{c_2} + c_1c_3^2\right)=0
    \end{equation*}
    such that $\eta_1+\eta_2\neq 0$. The expression \eqref{eq:k00exp} is interpreted as a limit whenever $\eta_1=\eta_2$.
    \end{theorem}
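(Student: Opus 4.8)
The plan is to compute the reproducing kernel $K_\nu$ of $\mathcal{H}_\nu = PW(\pi\Delta)$ for the measure $\d\nu(\alpha)= c_1\boldsymbol{\delta}(\alpha)+c_2\lvert\alpha\rvert e^{-c_3\lvert\alpha\rvert}\d\alpha$ by first writing out the norm explicitly. Since $\widehat{\boldsymbol{\delta}}(x)=1$ and the Fourier transform of $c_2|\alpha|e^{-c_3|\alpha|}\mathbf 1_{[-\Delta,\Delta]}$ can be computed in closed form, the inner product $\langle f,g\rangle_\nu = \int_\R f(x)\overline{g(x)}\widehat\nu(x)\,\d x$ becomes, after unfolding $\widehat\nu$ back via Plancherel and using that $\widehat f, \widehat g$ are supported in $[-\Delta/2,\Delta/2]$, an expression of the form $c_1\int f\bar g + c_2\int_{-\Delta}^{\Delta}|\alpha|e^{-c_3|\alpha|}\,\widehat f * \widetilde{\widehat g}\,(\alpha)\,\d\alpha$, i.e. a combination of an $L^2$ piece and a weighted piece involving the autocorrelation of the Fourier transforms. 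The key reduction is that for $f\in PW(\pi\Delta)$ one has, by Parseval on $[-\Delta/2,\Delta/2]$, an identity turning the weighted term into $\int_{-\Delta}^{\Delta} (\text{weight})\,|\widehat f \ast \cdots|$; more usefully, one differentiates: writing $F=\widehat f$ supported on $[-\Delta/2,\Delta/2]$, the quantity $\int |f|^2 e^{-c_3|x|}\,\d x$ relates to $\|F\|_2^2$ and boundary/derivative terms of $F$ at $\pm\Delta/2$ through integration by parts, because $e^{-c_3|x|}$ has Fourier transform $\tfrac{2c_3}{c_3^2+4\pi^2\xi^2}$, whose reciprocal is a second-order differential operator in $\xi$.

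Concretely, I would follow the template of \cite[Corollary 14]{CCLM} and Theorem \ref{thm:k00firstcase}: the reproducing kernel at $0$ is obtained by solving the extremal problem $K_\nu(0,0)^{-1} = \inf\{\|f\|_\nu^2 : f\in\mathcal H_\nu,\ f(0)=1\}$. By the equivalence of norms from Theorem \ref{thm:rkhs} this infimum is attained, and the Euler--Lagrange equation for the minimizer $f_0$ is that $\langle f_0, h\rangle_\nu = 0$ for all $h\in PW(\pi\Delta)$ with $h(0)=0$, which forces $\widehat\nu\cdot f_0$ (as a tempered object) to be supported, after the relevant manipulations, so that $f_0$ satisfies a linear ODE with constant coefficients on $\R$ coming from applying the operator "multiplication by $(\text{polynomial in }\eta)$" that inverts $\widehat\nu$. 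This is exactly where the quartic $c_1\eta^4 + 2(c_2 - c_1 c_3^2)\eta^2 + c_3^2(2c_2 + c_1 c_3^2) = 0$ enters: it is the characteristic polynomial governing $f_0$ on $\R$, so $f_0(x)$ is a linear combination of $\cosh(\eta_1 x), \cosh(\eta_2 x)$ (by evenness — the minimizer must be even since $\nu$ is even and $f\mapsto\|f\|_\nu$, $f(0)$ are invariant under $f(x)\mapsto f(-x)$) on the interval, with the exponential-type constraint and the transmission conditions at $\pm\Delta/2$ (matching of $f_0$ and its derivatives across the support of $\widehat\nu$, together with the jump produced by the Dirac mass $c_1\boldsymbol\delta$ and the kink of $|\alpha|e^{-c_3|\alpha|}$ at the origin in frequency) determining the coefficients.

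Having pinned down $f_0$ as an explicit combination of $\cosh(\eta_j x)$ and the polynomial/constant pieces, I would then compute $K_\nu(0,0) = \|f_0\|_\nu^{-2}$ — equivalently $K_\nu(0,0)= \langle f_0, K_\nu(0,\cdot)\rangle_\nu/\langle\ldots\rangle = f_0$ evaluated appropriately after normalization, but in practice one reads it off as the value of the relevant linear functional applied to the solution of the boundary-value problem. The functions $A(\eta)$ and $B(\eta)$ defined in \eqref{eq:auxiliaryA}--\eqref{eq:auxiliaryB} are precisely the boundary data: $A(\eta)$ packages the value of (the $\widehat\nu$-side of) the solution built from $\cosh(\eta\alpha)$, and $B(\eta)$ packages its second-order data, so the $2\times2$ linear system for the two coefficients has determinant $A(\eta_1)B(\eta_2)-B(\eta_1)A(\eta_2)$, explaining the denominators in \eqref{eq:k00exp}. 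The three summands in \eqref{eq:k00exp} then correspond to the contributions of the two $\cosh$ modes and the particular solution coming from the constant/linear part of the weight. The degenerate case $\eta_1=\eta_2$ is handled by the usual limiting argument (a double root gives $\cosh(\eta x)$ and $x\sinh(\eta x)$), and one checks everything reduces to Theorem \ref{thm:k00firstcase} as $c_3\to 0$.

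The main obstacle I anticipate is bookkeeping rather than conceptual: correctly deriving the transmission/boundary conditions at $\pm\Delta/2$ and at the origin in the frequency variable, since $\widehat\nu$ is not smooth — the term $c_2|\alpha|e^{-c_3|\alpha|}$ has a corner at $0$ and the operator inverting the $e^{-c_3|\cdot|}$ factor is a distributional second-order operator whose action generates the $c_3^2$ and $(2c_2+c_3^2c_1)$ combinations seen throughout \eqref{eq:k00exp}. Getting the constant $\tfrac{\Delta c_3^2}{2c_2+c_3^2c_1}$ and the two prefactors $\tfrac{1}{c_1}-\tfrac{A(0)c_3^2}{2c_2+c_3^2c_1}$, $\tfrac{c_3^2}{c_1}+\tfrac{B(0)c_3^2}{2c_2+c_3^2c_1}$ exactly right requires care with these distributional boundary terms; the cleanest route is probably to avoid distributions altogether by working directly with the quadratic form $\|f\|_\nu^2 = c_1|f(0)|^2$-type expansion... no — rather, to expand $\|f\|_\nu^2$ as $c_1\|f\|_{L^2}^2 + c_2\int_{-\Delta}^{\Delta}|\alpha|e^{-c_3|\alpha|}|(\widehat f\ast\text{refl }\widehat f)(\alpha)|\,\d\alpha$ is awkward; instead I would use the representation $\|f\|_\nu^2=\int_\R|f(x)|^2\widehat\nu(x)\,\d x$ directly, integrate the weighted part by parts twice exploiting that $\widehat\nu$ restricted to $(0,\Delta)$ and $(-\Delta,0)$ is a classical smooth function solving $(\partial^2 - c_3^2)\widehat\nu \propto$ (derivative of delta-type terms), and track the boundary contributions at $0^\pm$ and $\pm\Delta$. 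Once the quadratic form is in the shape $c_1\|f\|_2^2 + \text{(bilinear boundary form in } f,f',f'' \text{ at the endpoints)}$, the extremal function and the value $K_\nu(0,0)$ follow by the standard RKHS computation, and matching to the stated closed form is then a finite algebraic verification.
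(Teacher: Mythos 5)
Your high-level strategy coincides with the paper's. Both proceed by reducing the computation of the reproducing kernel to a boundary-value problem on the Fourier side: the Fourier transform $u_0 = \mathscr{F}[\overline{K_\nu(0,\cdot)}]$, supported in $[-\Delta/2,\Delta/2]$, must solve the convolution equation $(u_0*\nu)(\xi)=1$ there, and because convolution with the unrestricted weight $c_1\boldsymbol\delta+c_2|\alpha|e^{-c_3|\alpha|}\,\d\alpha$ inverts, on functions supported in $[-\Delta/2,\Delta/2]$, to the fourth-order constant-coefficient operator whose characteristic polynomial is exactly $c_1\eta^4+2(c_2-c_1c_3^2)\eta^2+c_3^2(2c_2+c_1c_3^2)$, the problem becomes an ODE with general solution a combination of $\cosh(\eta_j\xi)$ plus a constant particular solution. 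You identify this correctly (once one reads ``$f_0$'' as ``$\widehat{f_0}$''; throughout the sketch you conflate the spatial and frequency sides — e.g.\ speaking of ``the support of $\widehat\nu$'' when you mean the support of $\nu$, and of $f_0$ living on $[-\Delta/2,\Delta/2]$ when it is an entire function of exponential type — which is harmless as long as it is cleaned up).

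The genuine gap is in the conditions that pin down the two free coefficients. You propose ``transmission conditions at $\pm\Delta/2$'' coming from matching derivatives across the endpoints, together with jump conditions from the Dirac mass and the kink of $|\alpha|e^{-c_3|\alpha|}$. That is not what happens: the Dirac term contributes the local zeroth-order term $c_1 u(\xi)$ in the integral equation, not a jump, and there is nothing to ``match'' across $\pm\Delta/2$ because $u_0$ is simply supported inside $[-\Delta/2,\Delta/2]$. The four conditions the paper actually uses (Lemma \ref{lem:functionalodeexp}(ii)) are obtained by evaluating the integral equation $(u_0*\nu)(\xi)=1$ and its first three derivatives at $\xi=0$; they are non-local, involving integrals of $u_0$ over the interval, and this is precisely how the functions $A(\eta)$ and $B(\eta)$ (which are integrals of $\cosh(\eta\alpha)$ against the weight) enter. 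Evenness kills the odd-order conditions, leaving a $2\times 2$ linear system with determinant $A(\eta_1)B(\eta_2)-B(\eta_1)A(\eta_2)$, whose invertibility for $\frac{c_2}{c_1}\Delta^2\le 5/3$ is a separate, non-trivial estimate (Lemma \ref{lem:nonvanishing} in the appendix) that your sketch omits entirely. It is also necessary to first argue the integral equation has a unique $L^2$ solution (via the Fredholm alternative) and that this solution is smooth, so that differentiating is legitimate; none of this is in your proposal. To turn your sketch into a proof you would need to replace the ``transmission/boundary at $\pm\Delta/2$'' idea with the correct conditions at the origin (or derive genuinely equivalent endpoint conditions, which is not immediate), justify smoothness and uniqueness, and supply the non-vanishing argument.
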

    \begin{remark}
        When $c_3$ is sufficiently large, we have the following asymptotic description for $ \frac{1}{K_\nu(0,0)}$:
        \begin{equation*}
                 \frac{1}{K_\nu(0,0)}= \frac{c_1}{\Delta} + O\left(\frac{c_2 \Delta}{c_3}\right).
        \end{equation*}
    \end{remark}

In Section \ref{sec:computing-rks}, we actually compute the explicit expression of the full reproducing kernel $K_\nu(w,z)$ when $c_3 = 0$, and of $K_\nu(0,z)$ when $c_3 > 0$. These are the contents of Theorems \ref{thm:fullexpressionkwz} and \ref{thm:reprokernelkzero}, respectively. The reproducing kernel is a solution of a certain integral equation depending on $\nu$.  We obtain such a solution for the class of measures given by \eqref{eq:classofmeasures} by constructing and solving a set of suitable ordinary differential equations. This constitutes Lemmas \ref{lem:functionalodesimple} and \ref{lem:functionalodeexp}. 

Finally, we can combine all the previous results to obtain an estimate for the long-term averages of the function $\FF_\Gamma(\alpha, T)$ associated to a sequence $\Gamma$ satisfying (A1) and (A2) with the limiting measure given by $\d\nu(\alpha)= c_1\boldsymbol{\delta}(\alpha)+c_2|\alpha|e^{-c_3|\alpha|}\d\alpha$.
\begin{corollary}\label{thm:explicitboundswithreprokernel}
Let $\Gamma$ be a sequence of real numbers satisfying (A1) and (A2).
Assume $\nu$, the limiting measure for $\FF_\Gamma(\alpha, T)$ in the interval $[-\Delta, \Delta]$, is given by $\d\nu(\alpha)= c_1\boldsymbol{\delta}(\alpha)+c_2|\alpha|e^{-c_3|\alpha|}\d\alpha$. If $\frac{c_2}{c_1}\Delta^2\leq 5/3$ and $c_3\geq 0$, then, for all $\varepsilon>0$, $b\geq 1$ and for sufficiently large $\ell$, one has
\begin{align*}
  \bigg(\frac{1}{2}+s_0\bigg(\frac{1}{K_\nu(0,0)}-1\bigg)\!\!\bigg)_{\!\!+} + \frac{1}{2}-\varepsilon + o(1) < \frac{1}{\ell}\int_{b}^{b+\ell}\FF_\Gamma(\alpha, T)\d\alpha < \frac{1}{K_\nu(0,0)} +\varepsilon + o(1),
\end{align*}
as $T \to \infty$, with $\ell \geq \ell_0(\varepsilon)$ for the upper bound and $\ell \geq \ell_0(b,\varepsilon)$ for the lower bound, where $K_\nu(0,0)$ is given by Theorem \ref{thm:k00firstcase} and Theorem \ref{thm:k00secondcase}.
\end{corollary}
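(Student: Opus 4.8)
The plan is to assemble the statement from results already in hand, so that no new analytic work is required---only the bookkeeping of inequalities and of the admissible range of $\ell$. The hypothesis $\frac{c_2}{c_1}\Delta^2\le 5/3$ together with $c_3\ge 0$ is exactly the condition under which Theorem \ref{thm:rkhs} guarantees that $\|\cdot\|_\nu$ is a norm, that $\mathcal H_\nu=PW(\pi\Delta)$ as sets, and that the norm equivalence \eqref{eq:equivalentnorm} holds. Consequently Lemma \ref{lem:rkbound} applies and gives
\[
  \mathbf{C}_\nu\le\frac{1}{K_\nu(0,0)},
\]
where $K_\nu(0,0)$ is the closed-form quantity supplied by Theorem \ref{thm:k00firstcase} when $c_3=0$ and by Theorem \ref{thm:k00secondcase} when $c_3>0$. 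Since $\Gamma$ satisfies (A1) and (A2) with $\nu$ as its limiting measure on $[-\Delta,\Delta]$, Theorems \ref{thm:boundswithextremalproblem} and \ref{thm:indptlowerbound} are both available for $\FF_\Gamma$.

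For the upper bound I would simply chain the upper estimate of Theorem \ref{thm:boundswithextremalproblem} with the inequality above: for any $b\ge 1$ and $\ell\ge\ell_0(\varepsilon)$,
\[
  \frac{1}{\ell}\int_{b}^{b+\ell}\FF_\Gamma(\alpha,T)\,\d\alpha<\mathbf{C}_\nu+\varepsilon+o(1)\le\frac{1}{K_\nu(0,0)}+\varepsilon+o(1),
\]
as $T\to\infty$.

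For the lower bound I would combine two competing estimates and retain the larger. Theorem \ref{thm:indptlowerbound} yields $\frac{1}{\ell}\int_{b}^{b+\ell}\FF_\Gamma>\tfrac12-\varepsilon+o(1)$ for $\ell\ge\ell_0(b,\varepsilon)$. For the second estimate, the lower bound of Theorem \ref{thm:boundswithextremalproblem} reads $\frac{1}{\ell}\int_{b}^{b+\ell}\FF_\Gamma>1+s_0(\mathbf{C}_\nu-1)-\varepsilon+o(1)$; the one subtlety is the sign of $s_0$. Because $s_0<0$, multiplying $\mathbf{C}_\nu\le 1/K_\nu(0,0)$ by $s_0$ reverses the inequality, so $s_0\mathbf{C}_\nu\ge s_0/K_\nu(0,0)$ and hence $1+s_0(\mathbf{C}_\nu-1)\ge 1+s_0\big(\tfrac{1}{K_\nu(0,0)}-1\big)$; that is, the quotable bound degrades in the favorable direction. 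Taking the maximum of the two lower bounds and applying the elementary identity $\max\{x,\tfrac12\}=(x-\tfrac12)_+ +\tfrac12$ with $x=1+s_0\big(\tfrac{1}{K_\nu(0,0)}-1\big)$ rewrites that maximum as $\big(\tfrac12+s_0(\tfrac{1}{K_\nu(0,0)}-1)\big)_+ +\tfrac12$, which is precisely the asserted lower bound; it holds for $\ell\ge\ell_0(b,\varepsilon)$, the larger of the two thresholds.

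Since every component is a previously proved statement, there is no genuine obstacle here: the only points requiring attention are the direction of the inequalities when passing from $\mathbf{C}_\nu$ to $1/K_\nu(0,0)$ (driven by $s_0<0$), the recognition that the $(\,\cdot\,)_+$ expression in the statement encodes the maximum of the two available lower bounds, and checking that the thresholds $\ell_0$ coming from Theorems \ref{thm:boundswithextremalproblem} and \ref{thm:indptlowerbound} are mutually compatible with the dependence on $b$ and $\varepsilon$ claimed in the corollary.
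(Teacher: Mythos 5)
Your proposal is correct and follows the same assembly that the paper intends (the corollary is stated without a separate proof precisely because it is a mechanical consequence of Theorems \ref{thm:boundswithextremalproblem}, \ref{thm:indptlowerbound}, \ref{thm:rkhs}, Lemma \ref{lem:rkbound}, and the $K_\nu(0,0)$ computations). You correctly handle the one delicate point, namely that $s_0<0$ reverses the inequality when substituting $\mathbf{C}_\nu\le 1/K_\nu(0,0)$ into Theorem \ref{thm:boundswithextremalproblem}'s lower bound, and the identity $\max\{x,\tfrac12\}=(x-\tfrac12)_+ +\tfrac12$ reproduces the $(\cdot)_+$ form exactly.
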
 

\subsection{Applications}\label{sec:applications} Letting $\Gamma(T)$ be a sequence coming from zeros of a Dirichlet series or similar object, possibly varying with $T$, we can obtain upper and lower bounds for averages of $\FF_\Gamma(\alpha, T)$ as a consequence of our results. We treat three main classes of examples:
\begin{enumerate}
    \item Primitive elements of the Selberg class;
    \item Dedekind zeta functions of number fields;
    \item Real and imaginary parts of the Riemann zeta function (and suitable generalizations of these).
\end{enumerate}

\subsubsection{Primitive $L$-functions in $\mathcal{S}$ of degree $m$} The Selberg class $\mathcal{S}$, introduced by Selberg in 1989 \cite{S1989}, is the set of meromorphic functions $L(s)$ satisfying five fundamental axioms: Dirichlet series representation, analytic continuation, functional equation, Ramanujan hypothesis, and Euler product. Associated with each $L$-function in $\mathcal{S}$ is its degree $d_L$, which is determined by the parameters in its functional equation. Although not assumed in the axioms, it is conjectured that the degree of any element in the Selberg class is a non-negative integer, which is supported by all known examples. Some examples of elements in $\mathcal{S}$ include:
\begin{itemize}
        \item The Riemann zeta function $\zeta(s)$ (degree 1);
        \item Dirichlet $L$-functions $L(s, \chi)$ and their imaginary shifts $L(s+i\theta, \chi)$, where $\chi$ is a primitive character (degree 1);
        \item Dedekind zeta functions $\zeta_K(s)$ of number fields $K$ (degree $[K:\mathbb{Q}]$);
        \item $L$-functions associated to irreducible cuspidal automorphic representations of $GL_m$ over $\mathbb{Q}$ (degree $m$).
\end{itemize}

Murty and Perelli \cite{MP1999} studied the pair correlation of zeros of primitive functions in $\mathcal{S}$, which we recall are elements of this class that cannot be further factored into non-trivial elements of $\mathcal{S}$. Here, we will be considering sequences $\Gamma$ consisting of ordinates of non-trivial zeros of primitive elements $L$ of $\mathcal{S}$, i.e., $\Gamma = \{\gamma: \rho = \beta + i \gamma ,\, L(\rho) = 0,\, 0 \leq \beta \leq 1\}$. An application of the argument principle shows that for such a typical element $L \in \mathcal{S}$, the zero counting function is given by
\begin{equation*}
    N_L(T) = \#\{\rho = \beta + i \gamma : L(\rho) = 0, 0 \leq \beta \leq 1, -T/2 \leq \gamma \leq T/2  \} = \frac{d_L T}{2 \pi} \log T + O(T),
\end{equation*}
proving $\Gamma$ satisfies axiom (A1) with the interval $(0,T]$ replaced by $[-T/2, T/2]$ in \eqref{eq:zerocount}, and with $\lambda = d_L$. Accordingly, we define the form factor $\FF_\Gamma(\alpha,T)$ with a slight change in the definition in \eqref{eq:ff}, replacing the intervals.
As we have remarked previously, this change does not create any issues and all of our results still apply. We make this choice for easy comparison with the set-up of \cite{MP1999}, where the authors studied the asymptotic behavior of form factors defined similarly to above under an assumption on the Dirichlet series coefficients of $\log L(s).$ 

Recall that the Euler product axiom in the definition of the Selberg class states that, for any $L\in \mathcal{S}$,
\begin{align*}
\log{L(s)} = \sum_{n=1}^\infty \frac{b_L(n)}{n^s},
\end{align*}
where $b_L(n)=0$ unless $n=p^k$, $k\geq 1$ and $b_L(n)\leq n^\theta$ for some $\theta < 1/2$. 
Murty and Perelli proposed the following hypothesis about the coefficients $b_L (n)$.

\smallskip
\noindent \textbf{Hypothesis MP.} Let $v_L(n) := b_L(n) \log{n}$. We have
\begin{align*}
\sum_{n\leq x}v_L(n)\overline{v_L(n)}= (1+o(1))x\log{x}, \mbox{ as } x\rightarrow \infty.
\end{align*}
They showed that this hypothesis together with GRH implies
\begin{align*}
\FF_\Gamma(\alpha, T)\sim 
d_LT^{-2\lvert \alpha\rvert d_L} \log{T} + \lvert\alpha\rvert, &  \mbox{ as } T\rightarrow \infty,  
\end{align*}
uniformly for $0\leq \lvert \alpha\rvert\leq \frac{1}{d_L}(1-\delta)$, for any $\delta>0$. Therefore, $\Gamma$ satisfies (A2) with the limiting measure $\nu$ of $\FF_\Gamma(\alpha, T)$ in the interval $\left[-\frac{1}{d_L}, \frac{1}{d_L}\right]$ given by 
\begin{equation*}\d\nu(\alpha)=\boldsymbol{\delta}(\alpha)+\lvert \alpha\rvert \d\alpha.
\end{equation*}
As consequence of the above, by Corollary \ref{thm:explicitboundswithreprokernel} we obtain the following.
\begin{corollary}\label{coro2}
Assume GRH and Hypothesis MP, if $\Gamma$ is the sequence of ordinates of the non-trivial zeros of a primitive element of $\mathcal{S}$ of degree $m$, we have that, for all $\varepsilon > 0$ and $b\geq 1$,
\begin{equation*}
     \bigg( \frac{1}{2} +s_0\bigg(\frac{1}{\sqrt{2}}\cot{\frac{1}{\sqrt{2}m}} - \frac{2m -1}{2m}\bigg)\!\!\bigg)_{\!\!+} + \frac{1}{2} -\varepsilon + o(1) < \frac{1}{\ell}\int_{b}^{b+\ell}\FF_\Gamma(\alpha, T)\d\alpha < \frac{1}{\sqrt{2}}\cot{\frac{1}{\sqrt{2}m}} +\frac{1}{2m} +\varepsilon + o(1),
\end{equation*}
as $T\to \infty$, with $\ell\geq \ell_0(\varepsilon)$ for the upper bound and $\ell\geq \ell_0(b, \varepsilon)$ for the lower bound. 
\end{corollary}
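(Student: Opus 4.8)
The plan is to obtain Corollary \ref{coro2} by specializing Corollary \ref{thm:explicitboundswithreprokernel} to the present situation. First I would record what was established in the discussion preceding the statement: under GRH and Hypothesis MP, the sequence $\Gamma$ of ordinates of the non-trivial zeros of a primitive $L \in \mathcal{S}$ of degree $m$ satisfies (A1) with $\lambda = d_L = m$, thanks to the argument-principle count $N_L(T) = \frac{d_L T}{2\pi}\log T + O(T)$, and it satisfies (A2) with limiting measure $\d\nu(\alpha) = \boldsymbol{\delta}(\alpha) + |\alpha|\,\d\alpha$ on the interval $[-\Delta, \Delta]$, where $\Delta = 1/d_L = 1/m$. (Here one invokes the remark that replacing $(0,T]$ by $[-T/2, T/2]$ in \eqref{eq:zerocount} and \eqref{eq:ff} leaves all of our results unaffected.) This $\nu$ is exactly the measure \eqref{eq:classofmeasures} with parameters $c_1 = c_2 = 1$ and $c_3 = 0$.

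Next I would verify the admissibility hypothesis $\frac{c_2}{c_1}\Delta^2 \leq 5/3$, which here reads $1/m^2 \leq 5/3$ and holds for every positive integer $m$. Corollary \ref{thm:explicitboundswithreprokernel} therefore applies, and since $c_3 = 0$ the quantity $K_\nu(0,0)$ is the one computed in Theorem \ref{thm:k00firstcase}. Substituting $c_1 = c_2 = 1$, so that $\sqrt{c_2/(2c_1)} = 1/\sqrt{2}$ and $\sqrt{2/(c_1 c_2)} = \sqrt{2}$, together with $\Delta = 1/m$, into that formula yields
\begin{equation*}
K_\nu(0,0) = \sqrt{2}\,\cdot\, \frac{\sin\left(\frac{1}{\sqrt{2}\,m}\right)}{\cos\left(\frac{1}{\sqrt{2}\,m}\right) + \frac{1}{\sqrt{2}\,m}\sin\left(\frac{1}{\sqrt{2}\,m}\right)},
\end{equation*}
whence, after a short simplification,
\begin{equation*}
\frac{1}{K_\nu(0,0)} = \frac{1}{\sqrt{2}}\cot\frac{1}{\sqrt{2}\,m} + \frac{1}{2m} \qquad \text{and} \qquad \frac{1}{K_\nu(0,0)} - 1 = \frac{1}{\sqrt{2}}\cot\frac{1}{\sqrt{2}\,m} - \frac{2m-1}{2m}.
\end{equation*}

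Finally, I would plug these two expressions into the upper and lower bounds provided by Corollary \ref{thm:explicitboundswithreprokernel}; the positive part and the additive $\tfrac12-\varepsilon+o(1)$ carry through unchanged, and one recovers exactly the two sides of the asserted inequality, completing the proof. There is no genuine obstacle in this argument — it is a pure specialization — so the only points demanding attention are bookkeeping ones: checking that the degree normalization $\lambda = d_L$ is propagated consistently through the definition of $\FF_\Gamma$, that the interval shift in the form factor is harmless (as already remarked), and that the trigonometric rearrangement of $1/K_\nu(0,0)$ is carried out correctly.
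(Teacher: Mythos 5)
Your proposal is correct and is precisely the (unstated) argument the paper intends: Corollary~\ref{coro2} is a direct specialization of Corollary~\ref{thm:explicitboundswithreprokernel} with $\lambda = d_L = m$, $\Delta = 1/m$, and $(c_1,c_2,c_3) = (1,1,0)$, the verification $\Delta^2 c_2/c_1 = 1/m^2 \leq 5/3$, and the evaluation of $K_\nu(0,0)$ from Theorem~\ref{thm:k00firstcase}, together with the routine trigonometric simplification of $1/K_\nu(0,0)$ and $1/K_\nu(0,0)-1$ that you carried out. No gap.
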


\subsubsection{Dedekind zeta functions of number fields}
The Dedekind zeta function $\zeta_K(s)$ over a number field $K/\mathbb{Q}$ of degree $n$ with ring of integers $\mathcal{O}_K$ is defined as
\begin{align*}
    \zeta_K(s)= \sum_{I \subseteq \mathcal{O}_K}\frac{1}{N(I)^s},
\end{align*}
where the sum is over the ideals $I$ in $\mathcal{O}_K$  and $N(I)$ is the norm of $I$. It is known that for an abelian extension $K/\mathbb{Q}$, 
\begin{align*}
    \zeta_K(s) =\zeta(s) \prod_{j=1}^{n-1}L(s, \chi_j),
\end{align*}
for some Dirichlet characters $\chi_j$, $j = 1, \ldots n$, is a non-primitive $L$-function of degree $n$. In a recent preprint \cite{DRTW}, de Laat, Rolen, Tripp, and Wagner studied the pair correlation of zeros of $\zeta_K(s)$ under the Grand Riemann Hypothesis (GRH). If $\Gamma$ consists of the ordinates of the zeros of $\zeta_K$ in the critical strip, standard results confirm it satisfies assumption (A1). In \cite{DRTW}, the authors studied the form factor $\FF_\Gamma$ (defined as in \eqref{eq:ff}) and established that 
\begin{equation*}
\FF_\Gamma(\alpha, T)\sim 
nT^{-2n |\alpha|} \log{T} + n|\alpha|,
\end{equation*}
as $T\to \infty$, uniformly for $\lvert\alpha\rvert \leq\frac{1}{n}-\delta$, with $\delta>0$.
This gives that axiom (A2) also holds for $\Gamma$, with limiting measure $\nu$ in the interval $[-\frac{1}{n}, \frac{1}{n}]$ given by
\begin{equation*}
    \d\nu(\alpha) = \boldsymbol{\delta}(\alpha) + n |\alpha| \, \d\alpha,
\end{equation*}
whence we get the following corollary of Theorem \ref{thm:explicitboundswithreprokernel}.
\begin{corollary}\label{coro3} Assuming GRH, if $\Gamma$ is the sequence of zeros of the Dedekind zeta function $\zeta_K(s)$ of a degree $n$ abelian extension $K/\mathbb{Q}$, we have that, for all $\varepsilon > 0$ and $b\geq 1$,
  \begin{align*}
  \bigg(\frac{1}{2}+s_0\bigg(\sqrt{\frac{n}{2}}\cot{\frac{1}{\sqrt{2n}}}-\frac{1}{2}\bigg)\!\!\bigg)_{\!\!\!+} + \frac{1}{2} -\varepsilon + o(1) < \frac{1}{\ell}\int_{b}^{b+\ell}\FF_\Gamma(\alpha, T)\d\alpha < \sqrt{\frac{n}{2}}\cot{\frac{1}{\sqrt{2n}}} +\frac{1}{2}+\varepsilon+o(1),
  \end{align*}
   as $T\to \infty$, with $\ell\geq \ell_0(\varepsilon)$ for the upper bound and $\ell\geq \ell_0(b, \varepsilon)$ for the lower bound. 
\end{corollary}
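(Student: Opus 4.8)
The plan is to read off the result from Corollary~\ref{thm:explicitboundswithreprokernel}, specialized to the measure identified just above the statement. The discussion preceding the corollary already records that, under GRH, the sequence $\Gamma$ of ordinates of zeros of $\zeta_K(s)$ satisfies (A1) with $\lambda = n$ and (A2) with limiting measure $\d\nu(\alpha) = \boldsymbol{\delta}(\alpha) + n\lvert\alpha\rvert\,\d\alpha$ on $[-\Delta,\Delta]$, $\Delta = 1/n$; this is a measure of the form \eqref{eq:classofmeasures} with $c_1 = 1$, $c_2 = n$, and $c_3 = 0$. So the only thing left to do is to verify the hypothesis of Corollary~\ref{thm:explicitboundswithreprokernel} and to evaluate the constant $K_\nu(0,0)$ that it features.

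For the hypothesis, I would note that $\frac{c_2}{c_1}\Delta^2 = n\cdot n^{-2} = 1/n \le 5/3$ for every integer $n \ge 1$, so the corollary applies; and since $c_3 = 0$ the relevant value of $K_\nu(0,0)$ is the one given by Theorem~\ref{thm:k00firstcase} (not Theorem~\ref{thm:k00secondcase}). Substituting $c_1 = 1$, $c_2 = n$, $\Delta = 1/n$ into that formula, one has $\sqrt{c_2/(2c_1)} = \sqrt{n/2}$, hence $\sqrt{c_2/(2c_1)}\,\Delta = 1/\sqrt{2n}$ and $\sqrt{2/(c_1c_2)} = \sqrt{2/n}$, so that
\[
K_\nu(0,0) = \sqrt{\tfrac{2}{n}}\;\frac{\sin\!\big(\tfrac{1}{\sqrt{2n}}\big)}{\cos\!\big(\tfrac{1}{\sqrt{2n}}\big) + \tfrac{1}{\sqrt{2n}}\,\sin\!\big(\tfrac{1}{\sqrt{2n}}\big)}.
\]
Taking reciprocals and dividing numerator and denominator by $\sin(1/\sqrt{2n})$ gives the clean expression
\[
\frac{1}{K_\nu(0,0)} = \sqrt{\tfrac{n}{2}}\Big(\cot\!\big(\tfrac{1}{\sqrt{2n}}\big) + \tfrac{1}{\sqrt{2n}}\Big) = \sqrt{\tfrac{n}{2}}\cot\!\big(\tfrac{1}{\sqrt{2n}}\big) + \tfrac12 .
\]
Plugging this into the two bounds of Corollary~\ref{thm:explicitboundswithreprokernel}, and using that $\tfrac{1}{K_\nu(0,0)} - 1 = \sqrt{n/2}\cot(1/\sqrt{2n}) - \tfrac12$ for the lower bound, produces exactly the two inequalities in the statement.

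I do not anticipate any genuine obstacle: Corollary~\ref{thm:explicitboundswithreprokernel} is invoked as a black box, and the remaining work is the elementary trigonometric simplification of $1/K_\nu(0,0)$ above, together with the bookkeeping --- already carried out in \cite{DRTW} --- that $\zeta_K$ gives a sequence meeting (A1) and (A2). The only mild subtlety worth flagging is that, for $\zeta_K$, the counting interval in (A1) is naturally $[-T/2,T/2]$ rather than $(0,T]$; as noted after the statement of (A1), this replacement changes nothing in the arguments, so the general results may be applied verbatim.
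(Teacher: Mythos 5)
Your proof is correct and follows the same route the paper takes: invoke Corollary~\ref{thm:explicitboundswithreprokernel} with $c_1 = 1$, $c_2 = n$, $c_3 = 0$, $\Delta = 1/n$ (so $\frac{c_2}{c_1}\Delta^2 = 1/n \le 5/3$), compute $1/K_\nu(0,0) = \sqrt{n/2}\cot(1/\sqrt{2n}) + \tfrac12$ from Theorem~\ref{thm:k00firstcase}, and substitute. The only minor imprecision is your closing remark: for the Dedekind case the paper does not actually specify replacing $(0,T]$ by $[-T/2,T/2]$ (that remark was made for the Selberg-class example), though this is immaterial since, as the paper notes, the choice of interval does not affect the arguments.
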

\subsubsection{The real and imaginary parts of the Riemann zeta function} Another application of our results is about the behavior of the form factor for the zeros of the real and imaginary parts of the Riemann zeta function and similar functions. 
For $0<a\leq \frac{1}{2}$, consider the following generalization of $\operatorname{Re} \zeta$ and $\operatorname{Im} \zeta$ investigated in \cite{GK2018, Ga, Ki}: 
\begin{align*}
f_a(w, \theta) = \zeta(a+w) +e^{i\theta}\zeta(a-w), \mbox{ where } \theta\in [0, 2\pi).
\end{align*}
Let $\rho_a= \beta_a + i\gamma_a$ denote the typical zero of $f_a(w,\theta)$. 
We highlight that the zeros of $f_a(w,0)$ and $f_a(w,\pi)$ coincide respectively with those of $\operatorname{Re} \zeta (a + w)$ and  $\operatorname{Im} \zeta (a + w)$ on the line $\operatorname{Re}w = 0$.
Ki \cite{Ki} also showed that, under RH, all but a finite number of non-real zeros of $f_a(w, \theta)$ lie on $\operatorname{Re}{w}=0$ for any $0<a\leq \frac{1}{2}$. Thus, there exists a real number $T_a$ such that $\beta_a=0$ when $\gamma_a > T_a$. As pointed out in \cite{GK2018}, an inspection of the proof of Ki reveals this lower bound can be taken uniform, in the sense that there exists a $T_0$ which works for all $a \in (0,\frac{1}{2}]$ given a fixed $\theta \in [0,2\pi)$.

Assuming RH, Gonek and Ki \cite{GK2018} proved that the zeros of $f_a(w, \theta)$ are simple after a sufficiently large height for fixed $0<a< \frac{1}{2}$ and $\theta \in [0,2\pi)$, and that, under the same conditions, the consecutive zeros on the line $\operatorname{Re}{w}=0$ are regularly spaced.  However, when $a$ is close to $\frac{1}{2}$, the spacing between consecutive zeros is quite unpredictable. Hence, we focus on the case $a=\frac{1}{2}-\frac{c}{\log{T}}$, where $c>0$, and let $\Gamma(T)$ be the sequence of ordinates $(\gamma_a)$ for $T > T_0$ (here $\theta$ is still fixed). We make a slight change in the setup to match the definitions of \cite{GK2018}: we work with elements $\gamma, \gamma'\in \Gamma(T)$ from the interval $[T, 2T]$ instead of $[0,T]$.  As mentioned previously, this implicates no change in the arguments presented here. Now, Garaev \cite{Ga} and Ki \cite{Ki} have established that
\begin{equation*}
    N_a(T) = \#\{\rho_a: T \leq \gamma_a\leq 2T\} = \frac{T}{\pi}\log T + O(T),
\end{equation*}
which means $\Gamma(T)$ satisfies assumption (A1) with $\lambda = 2$ and replacing the interval $(0,T]$ by $[T,2T]$ in \eqref{eq:zerocount}. We thus also define $\FF_\Gamma(\alpha, T)$ taking elements $\gamma, \gamma'$ from the interval $[T, 2T]$ in  \eqref{eq:ff}.

For the form factor associated with the zeros of $f_a(w, \theta)$, Gonek and Ki \cite[Theorem 3]{GK2018} established the following \footnote{We remark that in \cite{GK2018}, the definition of the form factor has a scaling different from ours, so we change it accordingly.}. 
Let $0<\delta < 1/2$. Under RH, for $T\geq T_0$ and $\delta \leq a\leq 1/2$,
\begin{align*}
\FF_\Gamma(\alpha, T) \sim 2 T^{-4|\alpha|} \log{T} + \frac{2|\alpha|}{(a+1/2)(3-2a)}T^{(4a-2)|\alpha|},  
\end{align*}
as $T\to \infty$, uniformly for $0\leq |\alpha| \leq \frac{1}{2}-\delta$ and $\delta \leq a\leq 1/2$. 
From this, we obtain that $\Gamma(T)$ satisfies (A2) with limiting measure $\nu$ in the interval $[-\frac{1}{2}, \frac{1}{2}]$ given by
\begin{align*}
   \d\nu(\alpha)= \boldsymbol{\delta}(\alpha) + |\alpha|e^{-4c|\alpha|}\d\alpha.
\end{align*}
Hence we get the following result by applying Corollary \ref{thm:explicitboundswithreprokernel}.
\begin{corollary}\label{coro:reimzeta}
Assuming RH, if $\theta \in [0,2\pi)$, $a=\frac{1}{2}-\frac{c}{\log{T}}$, $c\geq0$, and $\Gamma(T)$ consists of the zeros of $f_a(w,\theta)$ along the line $\operatorname{Re}w = 0$, we have that, for all $\varepsilon > 0$ and $b\geq 1$,
\begin{align*}
        1+s_0\bigg(\frac{1}{K_\nu(0,0)} -1\bigg) -\varepsilon + o(1) < \frac{1}{\ell}\int_{b}^{b+\ell}\FF_\Gamma(\alpha, T)\d\alpha < \frac{1}{K_\nu(0,0)} +\varepsilon + o(1),
\end{align*}
as $T \to \infty$, with $\ell\geq \ell_0(\varepsilon)$ for the upper bound and $\ell\geq \ell_0(b, \varepsilon)$ for the lower bound, where $K_\nu(0, 0)$ is given by Theorem \ref{thm:k00firstcase} and Theorem \ref{thm:k00secondcase} with $\Delta = \frac{1}{2}, c_1 = 1, c_2 =1$, and $c_3 = 4c$.

In particular, if $\Gamma$ is either the sequence of zeros of $\operatorname{Re} \zeta$ or that of $\operatorname{Im} \zeta$ along the $\frac{1}{2}$-line, we have that, for all $\varepsilon > 0$ and $b\geq 1$,
   \begin{align*}
       0.7467\cdots -\varepsilon + o(1)< \frac{1}{\ell}\int_{b}^{b+\ell}\FF_\Gamma(\alpha, T)\, \d\alpha <2.1659\cdots +\varepsilon +o(1),
       \end{align*}
 as $T\to \infty$, with $\ell\geq \ell_0(\varepsilon)$ for the upper bound and $\ell\geq \ell_0(b, \varepsilon)$ for the lower bound.
\end{corollary}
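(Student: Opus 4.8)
The plan is to obtain Corollary~\ref{coro:reimzeta} as a direct instantiation of Corollary~\ref{thm:explicitboundswithreprokernel}. First I would verify the hypotheses of that corollary in the present setting: the discussion preceding the statement already shows that $\Gamma(T)$ satisfies (A1) with $\lambda = 2$ (after replacing $(0,T]$ by $[T,2T]$, which the paper has noted causes no change) and, via \cite[Theorem~3]{GK2018} together with RH and the choice $a = \frac12 - \frac{c}{\log T}$, that $\Gamma(T)$ satisfies (A2) with limiting measure $\d\nu(\alpha) = \boldsymbol{\delta}(\alpha) + |\alpha| e^{-4c|\alpha|}\,\d\alpha$ on $[-\tfrac12, \tfrac12]$. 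This is exactly a measure of the form \eqref{eq:classofmeasures} with $\Delta = \tfrac12$, $c_1 = 1$, $c_2 = 1$, and $c_3 = 4c$. The constraint $\frac{c_2}{c_1}\Delta^2 = \tfrac14 \leq 5/3$ is satisfied for every $c \geq 0$, so Corollary~\ref{thm:explicitboundswithreprokernel} applies and yields the first displayed inequality, with $K_\nu(0,0)$ supplied by Theorem~\ref{thm:k00firstcase} when $c = 0$ and by Theorem~\ref{thm:k00secondcase} when $c > 0$. One small point to address is that when $c = 0$ we must check that the general bound in Corollary~\ref{thm:explicitboundswithreprokernel} collapses to the cleaner form $1 + s_0(\tfrac{1}{K_\nu(0,0)} - 1)$ stated here; this holds because for $c_3 = 0$ one has $K_\nu(0,0) \leq 1$ (indeed the constant function $g \equiv 1$-type comparison, or simply $\mathbf{C}_\nu \geq 1$ from the Remark after Theorem~\ref{thm:boundswithextremalproblem}, forces $\tfrac{1}{K_\nu(0,0)} \geq 1$), so the positive-part truncation $\big(\tfrac12 + s_0(\tfrac{1}{K_\nu(0,0)} - 1)\big)_+$ equals $\tfrac12 + s_0(\tfrac{1}{K_\nu(0,0)} - 1)$ once $\tfrac{1}{K_\nu(0,0)}$ is not too large, and then adding the remaining $\tfrac12$ gives $1 + s_0(\tfrac{1}{K_\nu(0,0)} - 1)$; a brief numerical check of the range of $c$ confirms the truncation is inactive throughout.

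For the second, concrete pair of bounds I would specialize to $c = 0$, which corresponds to $a = \tfrac12$, i.e.\ to $\operatorname{Re}\zeta$ (taking $\theta = 0$) and $\operatorname{Im}\zeta$ (taking $\theta = \pi$) along the critical line. Here $\nu$ reduces to $\d\nu(\alpha) = \boldsymbol{\delta}(\alpha) + |\alpha|\,\d\alpha$ on $[-\tfrac12,\tfrac12]$, so I apply Theorem~\ref{thm:k00firstcase} with $\Delta = \tfrac12$, $c_1 = c_2 = 1$. Substituting these values, $\sqrt{\tfrac{c_2}{2c_1}}\,\Delta = \tfrac{1}{2\sqrt 2}$, and the theorem gives
\begin{equation*}
    K_\nu(0,0) = \sqrt{2}\,\frac{\sin\!\big(\tfrac{1}{2\sqrt 2}\big)}{\cos\!\big(\tfrac{1}{2\sqrt 2}\big) + \tfrac{1}{2\sqrt 2}\sin\!\big(\tfrac{1}{2\sqrt 2}\big)}.
\end{equation*}
A numerical evaluation then yields $\tfrac{1}{K_\nu(0,0)} = 2.1659\ldots$, which is the claimed upper bound, and $1 + s_0\big(\tfrac{1}{K_\nu(0,0)} - 1\big) = 1 + (-0.217233\ldots)(1.1659\ldots) = 0.7467\ldots$, which is the claimed lower bound; here I use the definition $s_0 = \min_{x}\tfrac{\sin x}{x} = -0.217233\ldots$ from the Notation subsection. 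Plugging these two numbers into the first displayed inequality of the corollary gives the second displayed inequality, completing the proof.

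The only genuinely nontrivial content in this corollary is upstream: it rests on Corollary~\ref{thm:explicitboundswithreprokernel} (hence on Theorems~\ref{thm:boundswithextremalproblem}, \ref{thm:rkhs}, \ref{thm:k00firstcase}, \ref{thm:k00secondcase}, Lemma~\ref{lem:rkbound}, and the averaging machinery of \cite{CMR}), and on the asymptotic for $\FF_\Gamma(\alpha,T)$ from \cite[Theorem~3]{GK2018} that certifies (A2). Assuming all of those, the proof of Corollary~\ref{coro:reimzeta} itself is a bookkeeping exercise: identify the parameters $(\Delta, c_1, c_2, c_3) = (\tfrac12, 1, 1, 4c)$, check $\frac{c_2}{c_1}\Delta^2 \leq 5/3$, invoke the master corollary, and in the $c = 0$ case carry out the one explicit evaluation of $K_\nu(0,0)$ from Theorem~\ref{thm:k00firstcase}. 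I do not anticipate a serious obstacle here; the main thing to be careful about is the normalization footnote---the form factor of \cite{GK2018} is scaled differently from $\FF_\Gamma$---so I would double-check that after rescaling the limiting measure is indeed $\boldsymbol{\delta}(\alpha) + |\alpha| e^{-4c|\alpha|}\,\d\alpha$ and not some constant multiple, since an erroneous constant would propagate into both numerical bounds. Apart from that, and the brief verification that the positive-part truncation in Corollary~\ref{thm:explicitboundswithreprokernel} is inactive over the relevant range of $c$, the argument is routine.
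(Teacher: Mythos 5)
Your proposal is correct and follows the same route the paper takes: the corollary has no separate proof in the text, since it is just the instantiation of Corollary~\ref{thm:explicitboundswithreprokernel} with the parameters $(\Delta, c_1, c_2, c_3) = (\tfrac12, 1, 1, 4c)$ extracted from Gonek--Ki's Theorem~3, followed by the $c=0$ numerics via Theorem~\ref{thm:k00firstcase}. The identification of the limiting measure, the verification that $\tfrac{c_2}{c_1}\Delta^2 = \tfrac14 \leq \tfrac53$, and the numerical evaluations $\tfrac{1}{K_\nu(0,0)} = 2.1659\ldots$ and $1 + s_0(\tfrac{1}{K_\nu(0,0)}-1) = 0.7467\ldots$ are all right.

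One remark on the part where you justify dropping the positive-part truncation: the intermediate step is a bit muddled. You argue that $\tfrac{1}{K_\nu(0,0)} \geq 1$ (equivalently $\mathbf{C}_\nu \geq 1$), but since $s_0 < 0$, having $\tfrac{1}{K_\nu(0,0)} \geq 1$ actually pushes $\tfrac12 + s_0(\tfrac{1}{K_\nu(0,0)} - 1)$ downward, i.e.\ it is what makes the truncation potentially active, not what rules it out. The condition you actually need is an \emph{upper} bound, namely $\tfrac{1}{K_\nu(0,0)} \leq 1 + \tfrac{1}{2|s_0|} \approx 3.30$. You do arrive at the correct conclusion in the end (via the ``not too large'' qualifier and the numerical check across $c \geq 0$, using that $\tfrac{1}{K_\nu(0,0)}$ decreases from $2.1659\ldots$ at $c=0$ toward $\tfrac{c_1}{\Delta} = 2$ as $c \to \infty$, per the remark after Theorem~\ref{thm:k00secondcase} and Figure~\ref{fig:boundsforF}), so this is a wrinkle in exposition rather than a genuine gap. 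Apart from that the argument matches the paper's intent exactly.
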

\begin{figure}[t]
\centering
\includegraphics[width=0.5\linewidth]{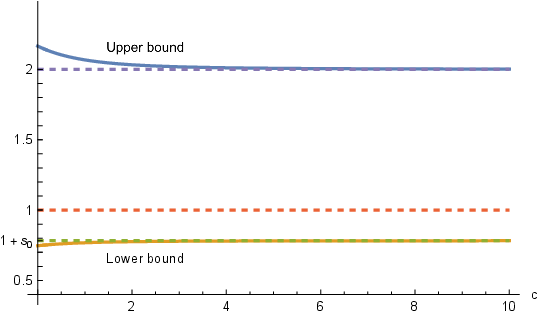}
\caption{Upper and lower bounds for $ \frac{1}{\ell}\int_{b}^{b+\ell}\FF_\Gamma(\alpha, T)\,\d\alpha $ as a function of $c$, where $c_1=1$, $c_2=1$, $c_3 = 4c$, and $\Delta  = 0.5$.}
\label{fig:boundsforF}
\end{figure}

In Figure \ref{fig:boundsforF}, we illustrate the upper and lower bounds given in the above corollary.

One implication of the average estimates of Corollary \ref{coro:reimzeta} has to do with \cite[Conjecture 2]{GK2018}, in which Gonek and Ki claim the following.
\begin{conjecture}
    Suppose $\theta \in [0,2\pi)$ is fixed, $a=1/2-c/\log T$ with $0<c=o(\log T)$, and that $A>0$ is arbitrarily large but fixed. Then, if $\Gamma(T)$ consists of the zeros of $f_a(w,\theta)$ along $\operatorname{Re}w = 0$,
    \begin{equation*}
        \FF_\Gamma(\alpha, T)=(2+o(1))T^{-4\lvert \alpha\rvert}\log T+\frac{\min (2 \lvert \alpha\rvert, 1)}{(a+1/2)(3-2a)}T^{(4a-2)\lvert \alpha\rvert}+o(1)
    \end{equation*}
    uniformly for $0\leq \lvert \alpha\rvert\leq A$.
\end{conjecture}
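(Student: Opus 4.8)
\emph{Proof proposal.}
The natural route is to follow the method by which Gonek and Ki established their Theorem~3 in the range $\lvert\alpha\rvert\le 1/2-\delta$ and to push the analysis to all bounded ranges of $\alpha$. One starts from an explicit formula for the zeros of $f_a(w,\theta)=\zeta(a+w)+e^{i\theta}\zeta(a-w)$: differentiating $\log f_a$ and shifting a contour relates, for a suitable test function $g$, the sum
\begin{equation*}
    \sum_{T\le\gamma_a\le 2T}g(\gamma_a)\,x^{i\gamma_a}
\end{equation*}
to a sum over prime powers whose coefficients are assembled from the Dirichlet coefficients of $\zeta(a+w)$ and of $\zeta(a-w)$, tied together by the factor $e^{i\theta}$. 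Choosing $x=T^{\lambda\alpha}=T^{2\alpha}$ (here $\lambda=2$), inserting the weight $w(u)=4/(4+u^2)$, and applying Parseval together with the Montgomery--Vaughan mean-value machinery, one rewrites $\FF_\Gamma(\alpha,T)$ as an explicit main term plus a nonnegative quadratic form
\begin{equation*}
    \Big(\tfrac{\lambda T}{2\pi}\log T\Big)^{-1}\sum_{m,n}c_a(m)\,\overline{c_a(n)}\,(mn)^{-a}\,I_{\alpha,T}(m,n)
\end{equation*}
in prime-power coefficients $c_a(n)$ — built from $\Lambda(n)$ and carrying the $w\mapsto-w$ symmetry of $f_a$ — with $I_{\alpha,T}(m,n)$ an explicit oscillatory integral.

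One then splits this double sum into a diagonal part ($m$ and $n$ in the same prime block) and an off-diagonal part. The diagonal part should reproduce Theorem~3 exactly: the coefficient $2$ in front of $T^{-4\lvert\alpha\rvert}\log T$ and the quantity $\tfrac{2}{(a+1/2)(3-2a)}$ emerge from the main-term bookkeeping for $f_a$ with $a=1/2-c/\log T$, while the diagonal prime sum — whose effective length is governed by the size $T^{2\lvert\alpha\rvert}$ — contributes $\lvert\alpha\rvert$ worth of mass as long as $2\lvert\alpha\rvert\le1$ and saturates (the range being exhausted once $2\lvert\alpha\rvert=1$) to a fixed amount for $2\lvert\alpha\rvert>1$, which is precisely the $\min(2\lvert\alpha\rvert,1)$ in the statement. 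Thus the diagonal already yields the predicted shape, and the entire content of the conjecture is the claim that, uniformly for $\lvert\alpha\rvert\le A$, the off-diagonal part is only $o(1)$.

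This reduces matters to two estimates. \emph{(i)} An upper bound: one must show $\sum_{m\ne n}c_a(m)\,\overline{c_a(n)}\,(mn)^{-a}\,I_{\alpha,T}(m,n)=o\!\big(\tfrac{\lambda T}{2\pi}\log T\big)$ uniformly for $\lvert\alpha\rvert\le A$ and $a$ in the prescribed range — a large-sieve / second-moment estimate in the style of Montgomery--Vaughan, after a Vaughan-type decomposition of $\Lambda$, with careful control of the dependence as $a\to1/2$. \emph{(ii)} Treating the off-diagonal terms that are \emph{not} manifestly small: exactly as in Montgomery's analysis of $F(\alpha)$ for $\lvert\alpha\rvert\ge1$, these are governed by Hardy--Littlewood-type prime correlations $\sum_n\Lambda(n)\Lambda(n+h)$, and the mechanism to exploit is that for $f_a$ the two diagonal-type contributions, one from $\zeta(a+w)$ and one from $\zeta(a-w)$, should interfere, so that these correlations do not produce a constant term and only $o(1)$ remains.

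\emph{Main obstacle.} Step \emph{(ii)} is the crux — controlling the off-diagonal prime correlations uniformly for $\lvert\alpha\rvert$ up to an arbitrarily large constant $A$. It is an analogue of Montgomery's strong pair correlation conjecture, the statement $F(\alpha,T)\to1$ for $\lvert\alpha\rvert\ge1$, which is still open for $\zeta$; so one can realistically hope to establish the conjecture only conditionally on Hardy--Littlewood conjectures for prime correlations, and even then extracting uniformity in $a=1/2-c/\log T$ as $c$ varies is delicate. A cautionary point: the conjectured cap $\min(2\lvert\alpha\rvert,1)$ would force $\FF_\Gamma(\alpha,T)\le\tfrac12 e^{-4c\lvert\alpha\rvert}+o(1)$ for $\lvert\alpha\rvert\ge\tfrac12$, and averaging this over $\alpha\in[b,b+\ell]$ with $\ell$ large is incompatible with the lower bounds of Corollary~\ref{coro:reimzeta} and Theorem~\ref{thm:indptlowerbound} unless $c\to0$; so one should expect the scheme above to close only when $c$ decays suitably, with the off-diagonal step being the place where any obstruction surfaces.
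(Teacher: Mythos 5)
There is a fundamental mismatch here: the statement you were asked to prove is not a theorem of the paper at all, but Gonek and Ki's conjecture, which the paper quotes precisely in order to \emph{refute} it in the stated generality. The paper's treatment is short: if the conjectured asymptotic held uniformly for $|\alpha|\le A$ with $c>0$ fixed, then averaging it over $[b,b+\ell]$ gives $\frac{1}{\ell}\int_b^{b+\ell}\FF_\Gamma(\alpha,T)\,\d\alpha \to \frac{1}{2\ell}\,\frac{e^{-4cb}(1-e^{-4c\ell})}{4c}$, which tends to $0$ as $b\to\infty$ or $\ell\to\infty$; this contradicts the unconditional-in-shape lower bounds of Corollary \ref{coro:reimzeta} (and even the universal $\tfrac12$-bound of Theorem \ref{thm:indptlowerbound}). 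So no proof of the statement as written can exist for fixed $c>0$; the most one can hope for is the regime $c=o(1)$, which the paper explicitly leaves open. Your own ``cautionary point'' at the end rediscovers exactly this obstruction — the cap $\min(2|\alpha|,1)$ forces $\FF_\Gamma(\alpha,T)\le \tfrac12 e^{-4c|\alpha|}+o(1)$ for $|\alpha|\ge\tfrac12$, incompatible with the lower bounds unless $c\to0$ — but you did not draw the necessary conclusion: the correct response is that the statement is false in this generality, not to outline a proof of it.

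Even setting that aside, the proposal is not a proof. Your step \emph{(ii)}, controlling the off-diagonal prime correlations uniformly for $|\alpha|$ up to an arbitrary fixed $A$, is (as you acknowledge) an analogue of Montgomery's strong pair correlation conjecture, $F(\alpha,T)\to1$ for $|\alpha|\ge1$, which is open even for $\zeta$ on RH; invoking Hardy--Littlewood-type prime correlation conjectures would make the argument conditional on hypotheses not granted in the statement. The diagonal analysis you sketch reproduces only what Gonek and Ki already proved (the range $|\alpha|\le\tfrac12-\delta$), so nothing new is actually established. In short: the decisive step is missing, and — more importantly — the target itself cannot be reached for fixed $c>0$, which is precisely the point the paper makes with this conjecture.
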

According to this conjecture we should have, for $c > 0$ fixed, that 
\begin{align*}
    \lim_{T\to \infty}\frac{1}{\ell}&\int_{b}^{b+\ell}\FF_\Gamma(\alpha, T)\,d\alpha \\
    &= \frac{2}{\ell}\lim_{T\to\infty}\int_{b}^{b+\ell}T^{-4\alpha}\log T\,d\alpha
    +\frac{1}{\ell}\lim_{T\to\infty}\frac{1}{\left(1-\frac{c}{\log T}\right)\left(2-\frac{2c}{\log T}\right)}\int_{b}^{b+\ell}T^{-4c\alpha/\log T}\,d\alpha\\
    &=\frac{1}{2\ell}\frac{e^{-4c b}(1-e^{-4c \ell})}{4c}.
\end{align*}
for any $b>\frac{1}{2}$ and $\ell>0$. As either $b \to \infty$ or $\ell\to \infty$,
\begin{equation*}
\frac{1}{2\ell}\frac{e^{-4c b}(1-e^{-4c \ell})}{4c}\to 0,
\end{equation*}
so that for sufficiently large $b$ or $\ell$ we should be seeing the averages of the function $\FF_\Gamma(\alpha, T)$ converging to 0. By the lower bounds of Corollary \ref{coro:reimzeta}, this cannot happen. 

Therefore, we conclude Conjecture 1 does not hold in this generality.
It is an interesting question whether the conjecture is still true when $c = o(1)$.

\section{Fourier Optimization Bounds}

\subsection{Proof of Theorem \ref{thm:boundswithextremalproblem}} Fix $g\in \A_\Delta$ with $g(0)>0$ and let $\phi$ be a  Schwartz function satisfying $\phi, \widehat{\phi} \geq 0$ and $\|\phi\|_1=1$ with $\supp{(\widehat{\phi})}\subset [-1,1]$. Define $\phi^\varepsilon(x)=\frac{1}{\varepsilon}\phi(x/\varepsilon)$ and set $g_\varepsilon:= g*\phi^\varepsilon$. We find that $g_\varepsilon$ occurs in $\A_\Delta^{BL}$, the collection of functions $f\in \A_\Delta$ such that $\operatorname{supp}{(\widehat{f})}$ is compact, and $\lim_{\varepsilon \to 0}\frac{\Phi_\nu(g_\varepsilon)}{g_\varepsilon(0)} = \frac{\Phi_\nu(g)}{g(0)}$. Thus,  given a $\delta>0$ , there is a $g \in \A_\Delta$ and a suitably small $\varepsilon$ for which $\frac{\Phi_\nu(g_\varepsilon)}{g_\varepsilon(0)} \leq \mathbf{C}_\nu +\delta$.

Observing this fact, for a given $\varepsilon>0$ there is a function $h\in \A_\Delta^{BL}\subset \A_\Delta$, that satisfies the inequality
\begin{align*}
    \Phi_\nu(h)\leq \mathbf{C}_\nu +\frac{\varepsilon}{3}.
\end{align*}
Without loss of generality, we normalize $h$ to have $h(0)=1$. Now, pick $g \in \A_{(1 - \delta)\Delta}^{BL}$ defined on the Fourier transform side by $\widehat{g}(\alpha) = \frac{1}{1-\delta}\widehat{h}\left(\frac{1}{1-\delta} \alpha\right)$ for a $0 < \delta < \frac{1}{2} $ small enough such that
\begin{equation*}
    \int_{-(1-\delta)\Delta}^{(1-\delta)\Delta} \widehat{g} (\alpha) \, \d \nu (\alpha) \leq \Phi_\nu(h) + \frac{\varepsilon}{3}.
\end{equation*}
and  $\operatorname{supp}{(\widehat{g})} \subset [-M, M]$, with $M$ a parameter uniform in $\delta$.

The core of the proof is an inequality regarding a convenient approximation of the characteristic functions of intervals (established by Carneiro, Milinovich, and Ramos in \cite[eq: (2.6)]{CMR} and reproduced here as \eqref{inequality 1}). For $L>M$, define the function
\begin{align*}
\mathcal{G}_L(x) = (\widehat{g}*\chi_{[-L, L]})(x)=\int_{x-L}^{x+L}\widehat{g}(t) \d t.
\end{align*}

We first address the upper bound. Setting $L=M+\frac{\ell}{2}$, it follows that  
\begin{align}\label{inequality 1}
    \mathcal{G}_L\bigg(x-b-\frac{\ell}{2}\bigg) \geq \chi_{[b, b+\ell]}(x) -\|\widehat{h}\|_1\cdot \chi_{b+\frac{\ell}{2}+I_L}(x),
\end{align}
where $I_L:= [-L-M, -L+M]\bigcup [L-M, L+M]$. The above inequality implies that 
\begin{align}\label{upper bound estimate}
    \int_{b}^{b+\ell}\FF_\Gamma(\alpha, T) \, \d \alpha \leq \int_{-\infty}^\infty \mathcal{G}_L\bigg(\alpha -b-\frac{\ell}{2}\bigg) \, \FF_\Gamma(\alpha, T)\,  \d \alpha +  \|\widehat{h}\|_1\int_{b + \frac{\ell}{2} + I_L}\FF_\Gamma(\alpha, T)\, \d \alpha.
\end{align}
By the definiton of $\A_\Delta$ and assumption (A2), we also get
\begin{equation}\label{upper bound}
\begin{split}
\int_{-\infty}^\infty \widehat{g}(\alpha) \, \FF_\Gamma(\alpha, T) \, \d\alpha  &\leq  \int_{-(1-\delta)\Delta}^{(1-\delta)\Delta} \widehat{g}(\alpha) \, \FF_\Gamma(\alpha,T) \, \d\alpha \\
&=  \int_{-(1-\delta)\Delta}^{(1-\delta)\Delta} \widehat{g}(\alpha) \,\d\nu(\alpha) +o(1)\leq \Phi_\nu(h)+ \frac{\varepsilon}{3} + o(1),
\end{split}
\end{equation}
as $T \to \infty$. So that, by Fourier inversion,
\begin{align}\label{eq:2}
   \nonumber \int_{-\infty}^\infty &\mathcal{G}_L \bigg(\alpha -b-\frac{l}{2}\bigg) \, \FF_\Gamma(\alpha, T)\, \d \alpha   \\
    \nonumber &=\bigg(\frac{\lambda T}{2\pi}\log\,{T}\bigg)^{-1} \operatorname{Re} \sum_{0<\gamma, \gamma'\leq T}T^{i\lambda(b+\frac{l}{2})(\gamma-\gamma')}(\mathscr{F}^{-1}[\chi_{[-L, L]}]\cdot g)\bigg( (\gamma\!-\!\gamma')\frac{\lambda\log\,{T}}{2\pi}\bigg) w(\gamma\!-\!\gamma')\\
    &\leq 2L \bigg(\frac{\lambda T}{2\pi}\log\,{T}\bigg)^{-1} \sum_{0<\gamma, \gamma'\leq T} g\bigg( (\gamma\!-\!\gamma')\frac{\lambda\log\,{T}}{2\pi}\bigg)w(\gamma\!-\!\gamma')\\
    \nonumber &= 2L \int_{-\infty}^\infty   \widehat{g}(\alpha) \, \FF_\Gamma(\alpha, T) \, \d\alpha \\
    \nonumber &\leq 2L\left\{\Phi_\nu(h)+ \frac{\varepsilon}{3} +o(1)\right\},
\end{align}
which is almost enough to establish the upper bound. It remains to show that the second integral in the right-hand side of \eqref{upper bound estimate} is bounded. But this follows since $|I_L|= 4M$, which is a finite length, and hence, $\int_{b + \frac{\ell}{2} + I_L}\FF_\Gamma(\alpha, T)\, \d \alpha \ll 4M$, by the argument of Goldston \cite[eq: (2.6)]{G}. Thus, for $\ell \geq \ell_0(\varepsilon)$, we get
\begin{align}\label{inequality 3}
 \frac{1}{\ell}\int_{b}^{b+\ell} \FF_\Gamma(\alpha, T) \, \d \alpha \leq  \frac{2 L}{\ell} \left\{ \Phi_\nu(h) + \frac{\varepsilon}{3} + o(1) \right\} \leq \mathbf{C}_\nu + \varepsilon + o(1).
 \end{align}

For the lower bound set $L=M+\beta$ and observe that by appropriately shifting in \eqref{inequality 1}
\begin{align*}
      \chi_{[-\beta, \beta]}(x) +  \|\widehat{h}\|_1\cdot \chi_{I_L}(x) \geq \mathcal{G}_L\left(x\right), 
\end{align*}
whence we get the inequality
\begin{align*}
    \int_{-\beta}^{\beta}\FF_\Gamma(\alpha, T) \d\alpha +  \|\widehat{h}\|_1\int_{I_L}\FF_\Gamma(\alpha, T) \,\d \alpha \geq \int_{\R} \mathcal{G}_L(\alpha)\,\FF_\Gamma(\alpha, T) \,\d\alpha. 
\end{align*}
Now, letting $m_\gamma$ be the multiplicity of the element $\gamma$  of the sequence (that is, $m_\gamma := \#\{n : \gamma_n = \gamma \}$), we deduce that
\begin{align*}
\int_{-\infty}^\infty \mathcal{G}_L(\alpha) & \FF_\Gamma(\alpha, T) \d\alpha  = \bigg(\frac{\lambda T}{2\pi}\log\,{T}\bigg)^{-1} \sum_{0<\gamma, \gamma'\leq T} g\bigg( (\gamma\!-\!\gamma')\frac{\lambda \log\,{T}}{2\pi}\bigg) \bigg( \frac{2L \sin{(L(\gamma\!-\!\gamma')\lambda\log\,{T})}}{L(\gamma\!-\!\gamma')\lambda\log\,{T}} \bigg) w(\gamma\!-\!\gamma')\\
&\geq 2L\bigg(\frac{\lambda T}{2\pi}\log\,{T}\bigg)^{-1} \bigg\{ \sum_{0<\gamma\leq T}m_\gamma +\min_{x\in \R}\frac{\sin{x}}{x} \sum_{\substack{0<\gamma, \gamma'\leq T\\ \gamma\neq \gamma'}} g\bigg( (\gamma\!-\!\gamma')\frac{\lambda \log\,{T}}{2\pi}\bigg) w(\gamma\!-\!\gamma')\bigg\} \\
& =2 L\bigg(\frac{\lambda T}{2\pi}\log\,{T}\bigg)^{-1}\bigg\{ (1-s_0) \sum_{0<\gamma\leq T}m_\gamma + s_0 \sum_{\substack{0<\gamma, \gamma'\leq T}}g\bigg( (\gamma\!-\!\gamma')\frac{\lambda\log\,{T}}{2\pi}\bigg)  w(\gamma\!-\!\gamma') \bigg\} \\
&\geq 2 L \bigg\{ \bigg(\frac{\lambda T}{2\pi}\log\,{T}\bigg)^{-1} (1-s_0) \sum_{0<\gamma\leq T} 1 + s_0 \int_{-\infty}^\infty \widehat{g}(\alpha) \, \FF_\Gamma(\alpha, T) \, \d\alpha   \bigg\}.
\end{align*}
Thus, recalling $s_0$ is negative, we apply \eqref{upper bound} and assumption (A1) to get 
\begin{align*}
    \int_{-\infty}^\infty \mathcal{G}_L(\alpha) \, \FF_\Gamma(\alpha, T)\, \d\alpha \geq 2L\left[1-s_0+s_0\Phi_\nu(h) + s_0\frac{\varepsilon}{3} +o(1)\right]  \geq 2\beta\left[1-s_0+s_0\Phi_\nu(h) + s_0\frac{\varepsilon}{3} + o(1)\right], 
\end{align*}
as $T\rightarrow \infty$. So that we have 
\begin{align}\label{eq:3}
\liminf_{T\rightarrow \infty}\frac{1}{2\beta}\int_{-\beta}^{\beta}\FF_\Gamma(\alpha, T)  \,\d\alpha \geq \left[1-s_0+s_0\Phi_\nu(h) + s_0\frac{\varepsilon}{3} +o(1)\right] -\frac{1}{2\beta}\limsup_{T\rightarrow \infty} \|\widehat{h}\|_1\int_{I_L}\FF_\Gamma(\alpha, T) \d \alpha.
\end{align}
Note that $|I_L|=4M$, a finite length, and thus the integral on the right-hand side of the inequality above is bounded. Therefore, by \eqref{eq:3} and our choice of $h$, for $\beta\geq \beta_0(\varepsilon)$ we have 
\begin{align*}
    \liminf_{T\rightarrow \infty}\frac{1}{2\beta}\int_{-\beta}^{\beta}\FF_\Gamma(\alpha, T) \d\alpha \geq 1-s_0+s_0(\mathbf{C}_\nu +\varepsilon).
\end{align*}

Also, when $\beta=b+\ell$, the evenness of $\FF_\Gamma(\alpha)$ yields 
\begin{equation}\label{eq:4}
\begin{split}
     \frac{1}{\ell}\int_{b}^{b+\ell}\FF_\Gamma(\alpha, T) \, \d\alpha & = \frac{2\beta}{2\ell}\frac{1}{2\beta}\int_{-\beta}^{\beta}\FF_\Gamma(\alpha, T) \, \d\alpha - \frac{1}{2\ell}\int_{-b}^{b}\FF_\Gamma(\alpha, T) \, \d\alpha\\
    & = \frac{1}{2\beta}\int_{-\beta}^{\beta}\FF_\Gamma(\alpha, T) \,\d\alpha + \frac{b}{\ell}\bigg(\frac{1}{2  \beta}\int_{-\beta}^{\beta}\FF_\Gamma(\alpha, T) \,\d\alpha  - \frac{1}{2b}\int_{-b}^{b}\FF_\Gamma(\alpha, T)\, \d\alpha\bigg).
\end{split}
\end{equation}
Note that $\int_{-\beta}^{\beta}\FF_\Gamma(\alpha, T) \,\d\alpha \ll \beta$, which makes the averages appearing in the equation above uniformly bounded. Hence, by using \eqref{eq:3} and \eqref{eq:4}, we have that for any $\varepsilon>0$ and given $b\geq 1$ there exists $\ell_0(b, \varepsilon)$ such that for all $\ell \geq \ell_0(b, \varepsilon)$
\begin{align*}
    \frac{1}{\ell}\int_{b}^{b+\ell}\FF_\Gamma(\alpha, T)\, \d\alpha \geq 1-s_0+s_0\mathbf{C}_\nu -\varepsilon + o(1),
\end{align*}
as $T \to \infty$, finishing the proof.
\subsection{Proof of Theorem \ref{thm:indptlowerbound}}\label{sec:univlowerbound} For any non-negative function $g \in L^1(\R)$ such that $\widehat{g} \in L^1(\R)$, we can write by the Fourier inversion formula 
\begin{align*}
    \int_{-\infty}^{\infty}\widehat{g}(\alpha)\, &\FF_\Gamma(\alpha, T)\, \d\alpha  = \bigg(\frac{\lambda T}{2\pi}\log\,{T}\bigg)^{-1} \sum_{0<\gamma, \gamma'\leq T} g\bigg( (\gamma\!-\!\gamma')\frac{\lambda \log\,{T}}{2\pi}\bigg) w(\gamma\!-\!\gamma')\\
    &= \bigg(\frac{\lambda T}{2\pi}\log\,{T}\bigg)^{-1} g(0)\sum_{0<\gamma\leq T} m_\gamma +\bigg(\frac{\lambda T}{2\pi}\log\,{T}\bigg)^{-1} \sum_{\substack{0<\gamma, \gamma'\leq T\\ \gamma\neq \gamma'}} g\bigg( (\gamma\!-\!\gamma')\frac{\lambda \log\,{T}}{2\pi}\bigg)w(\gamma\!-\!\gamma').
\end{align*}
Since the sum of off-diagonal terms is non-negative, we have by (A1) that, as $T \to \infty$,
\begin{align*}
  \int_{-\infty}^{\infty}\widehat{g}(\alpha)\,\FF_\Gamma(\alpha, T)\, \d\alpha \geq g(0) + o(1).
\end{align*}
Now if $g$ is such that $\widehat{g}(\alpha) \leq \chi_{[-\beta, \beta]} (\alpha)$, we have
\begin{align*}
    \int_{-\beta}^{\beta}\FF_\Gamma (\alpha, T)\,\d\alpha \geq \int_{-\infty}^{\infty}\widehat{g}(\alpha) \, \FF_\Gamma(\alpha, T)\,\d\alpha \geq g(0) + o(1),
\end{align*}
as $T\to\infty$, leading naturally to the extremal problem (EP2) defined in subsection \ref{EP}. 
We have thus just proven a lower bound depending on the Fourier optimization constant $\mathbf{D}_\beta$.
\begin{lemma}\label{lem:lowerboundep}
    Let $\Gamma$ be a sequence of real numbers satisfying (A1). The following bound holds as $T \to \infty$:
    \begin{align*}
        \int_{-\beta}^{\beta}\FF_\Gamma(\alpha, T) \, \d\alpha \geq \mathbf{D}_\beta + o(1).
    \end{align*}
\end{lemma}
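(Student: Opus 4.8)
The plan is to read the lemma off from the chain of inequalities already displayed immediately before its statement, the only genuinely new ingredient being the passage to the supremum over $\mathcal{R}_\beta$.

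First I would recall, for an arbitrary non-negative $g\in L^1(\R)$ with $\widehat{g}\in L^1(\R)$, the Fourier-inversion identity
\begin{equation*}
\int_{\R}\widehat{g}(\alpha)\,\FF_\Gamma(\alpha,T)\,\d\alpha = \bigg(\frac{\lambda T}{2\pi}\log T\bigg)^{-1}\sum_{0<\gamma,\gamma'\leq T} g\bigg((\gamma-\gamma')\frac{\lambda\log T}{2\pi}\bigg)w(\gamma-\gamma'),
\end{equation*}
isolate the diagonal term $\gamma=\gamma'$ (for which the argument of $g$ vanishes and $w(0)=1$), and discard the off-diagonal sum, which is non-negative because $g\geq 0$ and $w\geq 0$. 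This gives
\begin{equation*}
\int_{\R}\widehat{g}(\alpha)\,\FF_\Gamma(\alpha,T)\,\d\alpha \geq g(0)\,\bigg(\frac{\lambda T}{2\pi}\log T\bigg)^{-1}\!\!\sum_{0<\gamma\leq T}m_\gamma = g(0)\,(1+o(1)),
\end{equation*}
where the last equality uses Assumption (A1). Then, for $g\in\mathcal{R}_\beta$, the pointwise bound $\widehat{g}\leq\chi_{[-\beta,\beta]}$ combined with the non-negativity of $\FF_\Gamma(\alpha,T)$ (established just after \eqref{eq:ff}) yields $\int_{-\beta}^{\beta}\FF_\Gamma(\alpha,T)\,\d\alpha\geq\int_{\R}\widehat{g}\,\FF_\Gamma\,\d\alpha\geq g(0)+o(1)$.

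The final step is to make the choice of $g$ uniform. Since the $o(1)$ depends on $g$, I cannot take the supremum over $\mathcal{R}_\beta$ directly inside the inequality; instead, for fixed $\varepsilon>0$ I would pick one $g\in\mathcal{R}_\beta$ with $g(0)>\mathbf{D}_\beta-\varepsilon$ (legitimate since $\mathbf{D}_\beta$ is a supremum, and finite because $\int_{-\beta}^{\beta}\FF_\Gamma(\alpha,T)\,\d\alpha\ll\beta$), deduce $\liminf_{T\to\infty}\int_{-\beta}^{\beta}\FF_\Gamma(\alpha,T)\,\d\alpha\geq g(0)>\mathbf{D}_\beta-\varepsilon$, and let $\varepsilon\to 0$. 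This is exactly the asserted bound in its $o(1)$ formulation.

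I do not expect a real obstacle: the analytic content lives in the Fourier-inversion-plus-positivity step already carried out above, and what remains is routine bookkeeping. The one point to be careful about is that (A1) is only a $\liminf$ hypothesis, so the diagonal contribution is controlled only in that sense — but this is harmless, as the conclusion is itself a lower bound valid up to $o(1)$. One should also confirm that every $g\in\mathcal{R}_\beta$ is admissible in the inversion step, namely $g\geq 0$ and $g,\widehat{g}\in L^1(\R)$, which is precisely what the definition of $\mathcal{R}_\beta$ requires.
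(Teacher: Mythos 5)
Your argument is correct and matches the paper's proof step for step: Fourier inversion, isolating the diagonal term, discarding the non-negative off-diagonal sum, invoking (A1), and then exploiting $\widehat{g}\leq\chi_{[-\beta,\beta]}$ together with $\FF_\Gamma\geq 0$. The one place you are more explicit than the paper is the passage from ``$\int\geq g(0)+o(1)$ for each fixed $g\in\mathcal{R}_\beta$'' to ``$\int\geq\mathbf{D}_\beta+o(1)$''; the paper states the lemma without comment immediately after the per-$g$ bound, while you correctly observe that the $o(1)$ depends on $g$ and one should pass to the $\liminf$ before taking the supremum (equivalently, choose $g$ with $g(0)>\mathbf{D}_\beta-\varepsilon$ and let $\varepsilon\to 0$) — a point worth spelling out.
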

We know how to solve problem (EP2) exactly. This is the next step towards the lower bound independent of $\Delta$ in Theorem \ref{thm:indptlowerbound}.

\begin{proposition}\label{prop:ep2solution}
    For all $\beta > 0$, we have the equality $\mathbf{D}_\beta = \beta$.
\end{proposition}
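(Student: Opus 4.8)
The plan is to establish the two bounds $\mathbf D_\beta\geq\beta$ and $\mathbf D_\beta\leq\beta$ separately, the first by exhibiting an explicit near-optimal function and the second by a Poisson summation (Tur\'an-type) argument.

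For the lower bound, I would take $g$ to be a rescaled Fej\'er kernel: define $g$ on the Fourier side by $\widehat g(\alpha)=\bigl(1-|\alpha|/\beta\bigr)_+$, so that $g(x)=\beta\bigl(\tfrac{\sin\pi\beta x}{\pi\beta x}\bigr)^2$. This $g$ is continuous, nonnegative, and in $L^1(\R)$, with $\widehat g\in L^1(\R)$ and $\widehat g(\alpha)\leq\chi_{[-\beta,\beta]}(\alpha)$ for all $\alpha$; hence $g\in\mathcal R_\beta$. Since $g(0)=\int_\R\widehat g(\alpha)\,\d\alpha=\int_{-\beta}^{\beta}\bigl(1-|\alpha|/\beta\bigr)\,\d\alpha=\beta$, this shows $\mathbf D_\beta\geq\beta$.

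For the upper bound, the key point is that the trivial estimate $g(0)=\int_\R\widehat g\leq\int_{-\beta}^{\beta}1\,\d\alpha=2\beta$ loses a factor of $2$, and recovering it forces one to use $g\geq 0$ globally. Fix $g\in\mathcal R_\beta$ and a parameter $\delta>\beta$, and sample $g$ on the lattice $\tfrac1\delta\Z$. Poisson summation gives $\sum_{n\in\Z}g(n/\delta)=\delta\sum_{m\in\Z}\widehat g(m\delta)$. The left-hand side is at least $g(0)$ because every term is nonnegative; on the right, $\widehat g(0)\leq\chi_{[-\beta,\beta]}(0)=1$, while for $m\neq 0$ we have $|m\delta|>\beta$, so $\widehat g(m\delta)\leq 0$. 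Hence $g(0)\leq\delta$, and letting $\delta\downarrow\beta$ yields $g(0)\leq\beta$, so $\mathbf D_\beta\leq\beta$.

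The main obstacle is that the defining conditions of $\mathcal R_\beta$ (continuity together with $g,\widehat g\in L^1$) do not by themselves guarantee enough decay to justify Poisson summation pointwise at the origin. I would circumvent this by first regularizing: replace $g$ by $g_\varepsilon=g*G_\varepsilon$, where $G_\varepsilon$ is a Gaussian with $\widehat{G_\varepsilon}(\xi)=e^{-\pi\varepsilon\xi^2}$. One checks directly that $g_\varepsilon\in\mathcal R_\beta$ (nonnegativity is preserved by convolving with a positive kernel, and $\widehat{g_\varepsilon}=\widehat g\cdot\widehat{G_\varepsilon}\leq\chi_{[-\beta,\beta]}$ since $0<\widehat{G_\varepsilon}\leq 1$), and now $\sum_{m}|\widehat{g_\varepsilon}(m\delta)|\leq\|g\|_1\sum_m e^{-\pi\varepsilon m^2\delta^2}<\infty$. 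Thus the $\tfrac1\delta$-periodization of $g_\varepsilon$ has an absolutely convergent Fourier series and hence a continuous representative; since the pointwise series $\sum_n g_\varepsilon(\cdot+n/\delta)$ is lower semicontinuous and agrees with that representative almost everywhere, it lies below it everywhere, which legitimizes $g_\varepsilon(0)\leq\delta\sum_m\widehat{g_\varepsilon}(m\delta)\leq\delta$. Finally $g_\varepsilon(0)\to g(0)$ as $\varepsilon\to 0$ since $g$ is bounded and continuous, so the bound passes to the limit. I expect this regularization bookkeeping to be the only genuinely delicate part; the substance of the proof — the Fej\'er extremizer and the one-line Poisson inequality — is short.
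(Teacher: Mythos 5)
Your proof is correct and follows the same strategy as the paper's: the Fej\'er kernel yields the lower bound, and mollification followed by Poisson summation with the sign conditions yields the upper bound. The only differences are cosmetic — you sample on the coarser lattice $\tfrac{1}{\delta}\Z$ with $\delta>\beta$ and then let $\delta\downarrow\beta$ (sidestepping the boundary points $\pm\beta$, which the paper handles implicitly via the continuity of $\widehat g$ after normalizing to $\beta=1$), and you use a Gaussian mollifier where the paper uses a compactly supported smooth one with nonnegative Fourier transform.
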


\begin{proof}
    With no loss of generality, we can assume $\beta = 1$. This is because for any $g \in \mathcal R_\beta$, one can rescale $g$ to get an $h \in \mathcal R_1$ by setting $h(x)= \beta\,  g(\beta x)$. This goes in both directions, plainly yielding $\mathbf{D}_\beta  = \beta \, \mathbf{D}_1$.
    
    If we pick $g_0(x) = \left(\frac{\sin \pi x}{\pi x} \right)^2$, we have $\widehat {g_0}(\alpha) = \left(1-|\alpha| \right)_+$, it is clear that $g_0 \in \mathcal R_1$, so that $\mathbf{D}_1 \geq 1$. Now we prove that we can do no better than that. Let $g \in \mathcal{R}_1$ and $\phi$ be a non-negative smooth function of compact support such that $\widehat{\phi} \geq 0$ and $\int \phi = 1$. If $\phi^\eps(x):= \frac{1}{\eps} \phi(\frac{x}{\eps}) $, the function $g_\eps = g * \phi^\eps$ is integrable by Young's convolution inequality, it is non-negative and, since $\widehat{g_\eps}(\alpha) =  \widehat g(\alpha) \, \widehat{\phi}(\eps \alpha)$, it also satisfies $\widehat{g_\eps}(\alpha) \leq \chi_{[-1, 1]} (\alpha)$.
    Note that $g_\eps$ has bounded variation, which can be seen through another application of Young's convolution inequality. We can therefore apply the Poisson summation formula to obtain
    \begin{align*}
        \sum_{n \in \Z } g_\eps(n) = \sum_{k \in \Z }\widehat{g_\eps}(k), 
    \end{align*}
    but since $g_\eps \geq 0$ and $\widehat{g_\eps} \leq 0$ outside of $[-1, 1]$, we get
     \begin{align*}
        g_\eps(0) \leq \sum_{n \in \Z } g_\eps(n) = \sum_{k \in \Z } \widehat{g_\eps} (k) \leq \widehat{ g_\eps} (0) = \widehat g (0) \, \widehat{\phi}(0) = \widehat g(0). 
    \end{align*}
Since $g$ is a continuous function, we have that $g_\eps(x) \to g(x)$ converges uniformly in compact sets as $\eps \to 0$, so that $g(0) \leq \widehat g (0) \leq 1 $, implying $\mathbf{D}_1 \leq 1$ because $g$ was arbitrary. We thus have $\mathbf{D}_1 = 1$, concluding the proof.
\end{proof}
Putting together Lemma \ref{lem:lowerboundep}, Proposition \ref{prop:ep2solution} and the identity \eqref{eq:4}, we get Theorem \ref{thm:indptlowerbound}.
\section{The Reproducing Kernel Hilbert Space \texorpdfstring{${\mathcal H}_{\nu}$}{Hnu}}\label{sec:rkhs}
{
\subsection{Proof of Theorem \ref{thm:rkhs}}
Taking the Fourier transform of the measure $\nu$ that is compactly supported in $[-\Delta, \Delta]$, we obtain
\begin{equation}\label{eq:nuhat}
\begin{split}
    \widehat{\nu}(x)= \, &c_1 + \frac{c_2 \, e^{-c_3 \Delta}}{(c_3^2 + 4 \pi^2 x^2)^2}\bigg\{2 e^{c_3\Delta}(c_3^2 -4 \pi^2 x^2) \\ &- 2( c_3^2 -4\pi^2x^2+ c_3^3 \Delta + 4 c_3 \pi^2 x^2 \Delta)\cos(2\pi\Delta x)
       +   4 \pi x (2c_3 + c_3^2 \Delta + 4 \pi^2 x^2\Delta)\sin(2\pi  \Delta x)\bigg\}. 
\end{split}
\end{equation}

If we prove that this function is bounded above and below away from zero, i.e., that there exists $a,b > 0$ such that $a^2 \leq \widehat{\nu}(x) \leq b^2$, we are done, because then $\widehat{\nu}(x) \, \d x$ is a positive measure equivalent to Lebesgue, and we have the inequalities  
\begin{equation*}
    a^2 \int_\R g(x) \, \d x \leq \int_\R g(x)\, \widehat{\nu}(x) \, \d x \leq  b^2 \int_\R g(x) \, \d x
\end{equation*}
for all Lebesgue measurable $g \geq 0$, whence we also have \eqref{eq:equivalentnorm} by taking $g = |f|^2$ for $f \in L^2(\R)$. In particular, this means $\mathcal H_{\nu}$ is a normed linear space, and its completeness follows from the completeness of $PW(\pi \Delta)$. This likewise applies to the reproducing kernel property.

Now, we can trivially estimate it from above
\begin{equation*}
    | \widehat{\nu}(x)| \leq c_1 +  c_2 \int_{-\Delta}^{\Delta} \lvert\alpha\rvert\ e^{-c_3 \lvert\alpha\rvert } d\alpha \leq c_1 + c_2 \Delta^2,     
\end{equation*}
and to find a lower bound, set $t := 2 \pi \Delta x$ and $\sigma := c_3 \Delta$ and write
\begin{equation*}
    \widehat{\nu}(x) = c_2 \Delta^2 \left\{ \frac{c_1}{\Delta^2 c_2} - G(\sigma,t) \right\},
\end{equation*}
where
\begin{align*}
    G(\sigma,t)= - \frac{e^{-\sigma}}{(\sigma^2 +t^2)^2}\bigg\{2 e^{\sigma}(\sigma^2 - t^2) -  2( \sigma^2 - t^2 + \sigma^3 + t^2 \sigma)\cos(t)
       +  2 t (2 \sigma  + \sigma^2 + t^2 )\sin(t) \bigg\}.
\end{align*}
We will now show that $G(\sigma, t)$ cannot be too large. 

Observe first that this is a harmonic function. In fact, it is the real part of a holomorphic function of the variable $z = -\sigma + it$. Explicitly,
\begin{equation*}
    G(\sigma, t) = \operatorname{Re} \left\{ \frac{e^z (1-z)}{z^2} - \frac{1}{z^2}\right\}.
\end{equation*}
Hence, if we let $\Omega_R$ be the region $\{-\sigma + i t: \sigma > 0 , t > 0, |z| < R \}$ , we have by the maximum principle that
\begin{equation*}
   \max_{z \in \overline{\Omega}_R} G(\sigma, t) \leq \max_{z \in \partial\Omega_R}  \operatorname{Re} \left\{ \frac{e^z (1-z)}{z^2} - \frac{1}{z^2}\right\},
\end{equation*}
but, when $|z| = R$, we have that
\begin{equation*}
    \left| \operatorname{Re} \left\{ \frac{e^z (1-z)}{z^2} - \frac{1}{z^2}\right\} \right| \leq \frac{e^{-\sigma}(1+R)+1}{R^2},
\end{equation*}
which becomes very small when $R \to \infty$. Moreover, when $t = 0$, we have
\begin{equation*}
    G(\sigma, 0) = - \frac{e^{-\sigma}}{\sigma^4}\bigg\{2 e^{\sigma}(\sigma^2) -  2( \sigma^2 + \sigma^3)\bigg\} = \frac{2}{\sigma^2}\left\{ - 1  + e^{-\sigma} \left(1 + \sigma \right)\right\} \leq 0
\end{equation*}
by the elementary inequality $1 + \sigma \leq e^\sigma$ for $\sigma \geq 0$. So that to bound $G$ in the first quadrant, it is enough to consider it on the half-line $\sigma=0$, $t \geq 0$. That is,
\begin{equation*}
    \sup_{\sigma \geq 0, t \geq 0} G(\sigma, t) = \sup_{t > 0}  \frac{  2 -  2\cos(t)
       -  2 t\sin(t)}{t^2} = 0.586\ldots.
\end{equation*}
Thus, choosing $\frac{c_1}{\Delta^2 c_2} \geq \frac{3}{5} > 0.586\ldots$, we obtain that $\widehat{\nu}$ is bounded below uniformly away from zero, concluding the proof.
\begin{remark}
By a standard procedure involving a Fourier uncertainty principle (as used in \cite[proof of Lemma 12]{CCLM}), we can extend the range of admissible values of $c_1, c_2,$ and $\Delta$ all the way up to $\frac{c_2}{c_1} \Delta^2 \leq \frac{1}{ 0.586\ldots}$ to guarantee that $(\mathcal H_{ \nu}, \| \cdot \|_{\nu})$ is a reproducing kernel Hilbert space. 
\end{remark}

\subsection{Proof of Lemma \ref{lem:rkbound}}
It is immediate from the inequalities in \eqref{eq:equivalentnorm} that $\mathcal{H}_\nu$ is a reproducing kernel Hilbert space which is equal to $PW(\pi \Delta)$ as a set.

Returning to (EP1), we recall that, to find $\mathbf{C}_\nu$, we want to minimize
\begin{align*}
 \frac{\Phi_\nu(g)}{g(0)} = \frac{1}{g(0)} \int_{-\Delta}^{\Delta}\widehat{g}(\alpha)\,\d\nu(\alpha),
\end{align*}
over $g$ in the class $\mathcal A_\Delta$. We restrict ourselves to a subclass of bandlimited functions to obtain an upper bound for $\mathbf{C}_\nu$. This is an adaptation of the proof of \cite[Corollary 14]{CCLM} to our setting.

Denote by $\B_\Delta:= \{g \in \A_\Delta: \supp{(\widehat{g})}\subseteq [-\Delta, \Delta] \}$ and let $g\in \B_\Delta\subset \A_\Delta$. Since $\supp{(\widehat{g})}\subseteq [-\Delta, \Delta]$, we know that $g$ has a unique extension to an entire function of exponential type at most $2\pi\Delta$, which we also denote by $g$. Since $g(x)\geq 0$ and $g\in L^1(\mathbb{R})$ by Krein's decomposition theorem \cite[p. 154]{A} there exists an entire function $f\in PW(\pi \Delta)$ such that 
    \begin{equation*}
        g(z)=f(z)\overline{f(\bar z)}
    \end{equation*}
    for all $z \in \C$. By \eqref{eq:equivalentnorm}, $f$ is also a member of $\mathcal H_{\nu}$. Applying Plancherel's formula,
    \begin{equation*}
         \int_{-\Delta}^{\Delta}\widehat{g}(\alpha)\,\d\nu(\alpha) =  \int_\R g(x) \,  \widehat{\nu}(x) \, \d x = \int_\R |f(x)|^2 \, \widehat{\nu}(x) \, \d x. 
    \end{equation*}
    On the other hand, by the reproducing kernel property and the Cauchy--Schwarz inequality,
    \begin{equation*}
        g(0) = |f(0)|^2 = |\langle f, K_\nu (0, \cdot) \rangle|^2 \leq K_\nu(0,0)  \int_\R |f(x)|^2 \, \widehat{\nu}(x) \, \d x,
    \end{equation*}
    so that 
    \begin{equation*}
         \frac{1}{g(0)} \int_{-\Delta}^{\Delta}\widehat{g}(\alpha)\,\d\nu(\alpha) \geq \frac{1}{K_\nu(0,0)}.
    \end{equation*}
    Now, equality can be attained in the above inequality if and only if $g$ is a multiple of $K_\nu(0,z)\overline{K_\nu(0,\bar z)}$, which occurs in $\mathcal{B}_\Delta$ since $K_\nu(0,\cdot) \in L^2(\R)$ by \eqref{eq:equivalentnorm}. This is the best bound we can obtain for $\mathbf{C}_\nu$ working within the class $\B_\Delta$, and it gives us what we had set out to prove.

}

\section{Computing Reproducing Kernels}\label{sec:computing-rks}
In this section, we will study the reproducing kernel $K_\nu:\C^2 \to \C$ of the space $\mathcal H_{ \nu}$,
providing proofs for Theorems \ref{thm:k00firstcase} and \ref{thm:k00secondcase}. The starting point for both proofs is the same. For all $f \in \mathcal H_{\nu}$ and $w \in \C$, we must have by the reproducing kernel property and Plancherel's formula that
\begin{equation*}
    f(w)=\int_{\mathbb{R}}f(x)\overline{K_\nu(w, x)} \, \widehat{\nu}(x)\,\d x=\int_{-\Delta/2}^{\Delta/2} \widehat{f}(\xi) \mathscr{F}^{-1}\left[\overline{K_\nu(w, \cdot)} \, \widehat{\nu}\,\right](\xi)\,\d\xi.
\end{equation*}
But by Fourier inversion
\begin{equation*}
    f(w) = \int_{-\Delta/2}^{\Delta/2} \widehat{f}(\xi) e^{2\pi i w \xi}\, \d\xi
\end{equation*}
as well. Since $\mathscr{F}(\mathcal H_{ \nu}) = \mathscr{F}(PW(\pi \Delta)) = L^2([-\frac{\Delta}{2},\frac{\Delta}{2}])$ by the Paley--Wiener theorem, we must therefore have
\begin{equation*}
    \int_{\mathbb{R}}\overline{K_\nu(w, x)} \,  \widehat{\nu}(x) e^{2\pi i \xi x}\, \d x=  \mathscr{F}^{-1}\left[\overline{K_\nu(w, \cdot)} \, \widehat{\nu}\,\right](\xi) = e^{2\pi i w \xi}
\end{equation*}
for almost all $\xi \in [-\frac{\Delta}{2},\frac{\Delta}{2}]$. Denoting  by $u_w=\mathscr{F}\left[\overline{K_\nu(w, \cdot)}\right]$, we will thus seek to solve for $u_w$ in the $L^2([-\frac{\Delta}{2},\frac{\Delta}{2}])$ equation given by
\begin{equation}\label{eq:functionaleq}
    (u_w * \nu)(\xi) = \int_\R u_w(\xi -\alpha) \, \d \nu(\alpha) =  e^{-2\pi i w \xi}
\end{equation}
for almost all $\xi \in [-\frac{\Delta}{2},\frac{\Delta}{2}]$.

\subsection{Proof of Theorem \ref{thm:k00firstcase}} We obtain the expression for the reproducing kernel $K_\nu(w,z)$ of the space $\mathcal H_{\nu}$ with weight given by \eqref{eq:nuhat}, which in this case simplifies to
\begin{align*}
    \widehat{\nu}(x)= c_1 + c_2 \Delta \frac{\sin{2\pi \Delta x}}{\pi x} - c_2\left(\frac{\sin{\pi \Delta x}}{\pi x}\right)^2.
\end{align*}
To state our result, we define the functions
\begin{align*}
    a(w) &:= \frac{-2 c_2\left(\cos{\left(\pi \Delta w\right)}+\pi \Delta w \sin{\left(\pi \Delta w\right)}\right)}{c_1(2c_1\pi^2w^2-c_2)\left(2\cos{\left(\sqrt{\frac{c_2}{2c_1}}\Delta\right)}+\sqrt{\frac{2c_2}{c_1}} \Delta\sin{\left(\sqrt{\frac{c_2}{2c_1}}\Delta\right)}\right)} ;\\
    b(w) &:= \sqrt{\frac{2c_2}{c_1 }}\cdot\frac{i \pi w \cos{\left(\pi \Delta w\right)} }{\left(2c_1\pi^2w^2-c_2\right)\cos{\left(\sqrt{\frac{c_2}{2 c_1}}\Delta\right)}};\\
    c(w) &:=  \frac{2\pi^2w^2}{2c_1\pi^2w^2-c_2},
\end{align*}
and
\begin{align*}
    q(z) &:=  \frac{\sqrt{2 c_1 c_2}}{c_2 - 2 c_1 \pi^2 z^2}\sin\left(\sqrt{\frac{c_2}{2c_1}} \Delta\right) \cos(\pi \Delta z) - \frac{ 2 \pi c_1 z}{c_2 - 2 c_1\pi^2 z^2}\cos\left(\sqrt{\frac{c_2}{2c_1}} \Delta\right) \sin(\pi \Delta z);\\
    r(z) &:=  i \frac{\sqrt{2 c_1 c_2}}{c_2 - 2 c_1 \pi^2 z^2}\cos\left(\sqrt{\frac{c_2}{2c_1}} \Delta\right) \sin(\pi \Delta z) + i \frac{ 2 \pi c_1 z}{c_2 - 2 c_1\pi^2 z^2}\sin\left(\sqrt{\frac{c_2}{2c_1}} \Delta\right) \cos(\pi \Delta z);.
\end{align*}
We will show a stronger result than Theorem \ref{thm:k00firstcase} in the following theorem.

\begin{theorem}\label{thm:fullexpressionkwz}
Under the hypotheses of the Theorem \ref{thm:k00firstcase}  we have
\begin{equation*}
    K_\nu(w,z) = a(\bar w)  q(z) + b(\bar w) r(z) + c(\bar w) \frac{\sin\pi \Delta(z - \bar w)}{\pi(z - \bar w)},
\end{equation*}
where the above expression is understood in the limit sense when either $z$ or $w$ equal $\pm \frac{1}{\pi} \sqrt{\frac{c_2}{2 c_1}}$.
\end{theorem}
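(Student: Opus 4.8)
The plan is to solve the functional equation \eqref{eq:functionaleq} explicitly for $u_w = \mathscr{F}[\overline{K_\nu(w,\cdot)}]$ and then recover $K_\nu(w,z)$ by Fourier inversion. With $\d\nu(\alpha) = c_1 \boldsymbol{\delta}(\alpha) + c_2 |\alpha| \d\alpha$ on $[-\Delta,\Delta]$, the equation becomes
\begin{equation*}
    c_1 u_w(\xi) + c_2 \int_{-\Delta}^{\Delta} u_w(\xi - \alpha) |\alpha| \, \d\alpha = e^{-2\pi i w \xi}, \qquad \xi \in \left[-\tfrac{\Delta}{2}, \tfrac{\Delta}{2}\right].
\end{equation*}
The first step is to turn this integral equation into an ODE. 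The idea is that the kernel $|\alpha|$ has the property that applying $\frac{\d^2}{\d\xi^2}$ to the convolution $\int u_w(\xi-\alpha)|\alpha|\d\alpha$ brings down a Dirac mass: concretely, differentiating twice and using $|\alpha|'' = 2\boldsymbol{\delta}$ (in the sense of distributions), together with boundary terms coming from the finite interval $[-\Delta,\Delta]$, one obtains a second-order linear constant-coefficient ODE for $u_w$ on $(-\tfrac{\Delta}{2},\tfrac{\Delta}{2})$, of the form $c_1 u_w'' + 2c_2 u_w = (\text{something}) e^{-2\pi i w\xi}$, whose characteristic roots are $\pm i\sqrt{2c_2/c_1}$. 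This is exactly the content of the lemma referred to in the paper as Lemma \ref{lem:functionalodesimple}; I would carry out this computation carefully, tracking the boundary contributions at $\alpha = \pm\Delta$ as forcing/constraint data.

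The second step is to solve this ODE. The general solution is a particular solution proportional to $e^{-2\pi i w\xi}$ (valid since $2\pi w \ne \pm\sqrt{2c_2/c_1}$, i.e. $w \ne \pm\frac{1}{\pi}\sqrt{c_2/2c_1}$, which is why the statement needs the limiting interpretation at those points) plus a homogeneous part $c_1(w)\cos(\sqrt{2c_2/c_1}\,\xi) + c_2(w)\sin(\sqrt{2c_2/c_1}\,\xi)$. The constant $c(\bar w)$ in the theorem should be (up to conjugation) the coefficient of the particular solution — note $c(w) = 2\pi^2w^2/(2c_1\pi^2w^2 - c_2)$ is precisely of the expected rational shape. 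The two remaining unknowns are then pinned down by substituting back into the original \emph{integral} equation: evaluating the convolution against $|\alpha|$ for the trial solution produces two linear conditions (effectively evaluating at the endpoints, or matching the non-local part of the equation that the differentiation step discarded). Solving this $2\times 2$ linear system gives $c_1(w), c_2(w)$ as explicit trigonometric expressions; the denominators $2\cos(\sqrt{c_2/2c_1}\,\Delta) + \sqrt{2c_2/c_1}\,\Delta\sin(\sqrt{c_2/2c_1}\,\Delta)$ and $\cos(\sqrt{c_2/2c_1}\,\Delta)$ appearing in $a(w), b(w)$ are the determinant (and a subdeterminant) of that system. One should check the system is nonsingular under the hypothesis $\frac{c_2}{c_1}\Delta^2 \le 5/3$ (which keeps $\sqrt{c_2/2c_1}\,\Delta$ well below the first zero of the relevant combination).

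The third step is Fourier inversion: $K_\nu(w,z) = \overline{\mathscr{F}^{-1}[u_w](\bar z)} = \overline{\int_{-\Delta/2}^{\Delta/2} u_w(\xi) e^{2\pi i \bar z \xi}\,\d\xi}$. Since $u_w(\xi)$ is a linear combination of $e^{-2\pi i w\xi}$, $\cos(\sqrt{2c_2/c_1}\,\xi)$, $\sin(\sqrt{2c_2/c_1}\,\xi)$ with coefficients depending on $w$, each of the three integrals over $[-\tfrac{\Delta}{2},\tfrac{\Delta}{2}]$ is elementary. The $e^{-2\pi i w\xi}$ term integrates to a multiple of $\frac{\sin\pi\Delta(z - \bar w)}{\pi(z-\bar w)}$ — matching the last term of the claimed formula — while the $\cos$ and $\sin$ terms integrate to the functions $q(z)$ and $r(z)$ (these are exactly $\int_{-\Delta/2}^{\Delta/2}\cos(\sqrt{2c_2/c_1}\,\xi)e^{2\pi i z\xi}\d\xi$ and its sine analogue, up to constants, which explains the structure $\frac{1}{c_2 - 2c_1\pi^2 z^2}(\cdots\cos\pi\Delta z \pm \cdots\sin\pi\Delta z)$). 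Collecting terms and taking the conjugate (which sends $w \mapsto \bar w$ in the coefficients and leaves the even/real combinations in $z$ intact) yields the stated expression. Then Theorem \ref{thm:k00firstcase} follows by setting $w = z = 0$ and simplifying, with the $\frac{\sin\pi\Delta(z-\bar w)}{\pi(z-\bar w)}$ term contributing $c(0)\Delta$ and so on.

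The main obstacle I anticipate is the first step — rigorously converting the integral equation to the ODE while correctly accounting for all boundary terms at $\alpha = \pm\Delta$ (since the convolution is over a finite interval, not all of $\R$, the naive distributional identity $|\alpha|'' = 2\boldsymbol\delta$ must be supplemented), and ensuring the solution of the ODE actually satisfies the \emph{original} equation rather than merely the differentiated one. The bookkeeping in the $2\times 2$ linear system, and verifying the limiting interpretation at $z$ or $w = \pm\frac{1}{\pi}\sqrt{c_2/2c_1}$ (where both a numerator and the factor $c_2 - 2c_1\pi^2 z^2$ vanish, so the apparent pole is removable), will also require care but is routine once the setup is right. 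A final consistency check worth doing is symmetry: $K_\nu(w,z) = \overline{K_\nu(z,w)}$ and $K_\nu$ Hermitian, which the formula should respect.
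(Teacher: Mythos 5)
Your plan is essentially the paper's proof: differentiate the integral equation twice to obtain the second-order constant-coefficient ODE $c_1 u_w'' + 2c_2 u_w = -4\pi^2 w^2 e^{-2\pi i w\xi}$, write the general solution as a particular exponential plus $A\cos(\sqrt{2c_2/c_1}\,\xi) + B\sin(\sqrt{2c_2/c_1}\,\xi)$, fix $A,B$ from a $2\times 2$ linear system coming from the original (undifferentiated) equation, then Fourier-invert and use $K_\nu(w,z)=\overline{k_w(\bar z)}$; the limiting interpretation at $w,z = \pm\tfrac{1}{\pi}\sqrt{c_2/2c_1}$ is justified exactly by the holomorphy/anti-holomorphy of the kernel, as you note.

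Two small clarifications relative to the paper's execution. First, the worry about ``boundary contributions at $\alpha=\pm\Delta$'' is a red herring: since $u_w$ is supported in $[-\Delta/2,\Delta/2]$, the truncated convolution $\int_{-\Delta}^\Delta u_w(\xi-\alpha)|\alpha|\,\d\alpha$ equals $\int_{-\Delta/2}^{\Delta/2} u_w(\alpha)|\xi-\alpha|\,\d\alpha$ for $\xi\in[-\Delta/2,\Delta/2]$, and differentiating twice with $\operatorname{sgn}' = 2\boldsymbol{\delta}$ is clean with no endpoint terms. Second, the paper's two constraints are obtained by evaluating the original integral equation and its first derivative at $\xi=0$ (not ``at the endpoints''), and the real subtlety — that a solution of the ODE with those constraints actually solves the original integral equation — is settled by integrating the ODE back twice from $0$ via Fubini and plugging in the constraints, which reconstitutes the $|\xi-\alpha|$ kernel; this is the one verification you flagged but did not carry out, and it is worth doing explicitly. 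Otherwise the route, including the elementary $[-\Delta/2,\Delta/2]$-Fourier integrals producing $q(z)$, $r(z)$, and the $\tfrac{\sin\pi\Delta(z-\bar w)}{\pi(z-\bar w)}$ term, is the one in the paper.
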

In particular,
\begin{align*}
    K_\nu(0,0) = a(0) \, q(0) =  \sqrt{ \frac{2 }{c_1 c_2}} \frac{\sin{\left(\sqrt{\frac{c_2}{2c_1}} \Delta \right)}}{\cos{\left( \sqrt{\frac{c_2}{2c_1}} \Delta \right)}+\sqrt{\frac{c_2}{2c_1}} \Delta \sin{\left(\sqrt{\frac{c_2}{2c_1}} \Delta \right)}}.
\end{align*}
Thus, Theorem \ref{thm:k00firstcase} follows as a consequence of this result.

The proof of Theorem \ref{thm:fullexpressionkwz} is obtained by solving the functional equation \eqref{eq:functionaleq}. We collect the results about this equation in the following lemma.

\begin{lemma}\label{lem:functionalodesimple}
    Let $c_1,c_2 > 0$, $w \in \C$,  $\Delta > 0$. When $\frac{c_2}{c_1}\Delta^2 < 2 $, the following statements hold.
    \begin{enumerate}[(i)]
        \item There exists a unique function $u_w \in L^2(\R)$ with $\mathrm{supp} \, u_w \subseteq [-\frac{\Delta}{2},\frac{\Delta}{2}]$ such that
        \begin{equation*}
            c_1 u_w (\xi) + c_2 \int_{-\Delta}^\Delta u_w(\xi - \alpha) |\alpha| \, d \alpha = e^{- 2 \pi i \xi w} 
        \end{equation*}
        for almost all  $\xi \in [-\frac{\Delta}{2},\frac{\Delta}{2}]$. 
        \item When $w \neq \pm \frac{1}{\pi} \sqrt{\frac{c_2}{2 c_1}}$, the function $u_w$ is given by the expression
        \begin{equation}\label{eq:ftrk}
            u_w(\xi)=a(w) \cos \left(\sqrt{\frac{2c_2}{c_1}}\xi\right)+ b(w) \sin \left(\sqrt{\frac{2c_2}{c_1}}\xi\right)+ c(w)e^{-2\pi i w \xi } 
        \end{equation}
        in the interval $[-\frac{\Delta}{2}, \frac{\Delta}{2}]$ and is identically zero outside it. 

        \item If we define the entire function
        \begin{equation*}
            k_w (z) := \int_{-\Delta/2}^{\Delta/2} u_w(\alpha) e^{2 \pi i \alpha z} \, \d\alpha,
        \end{equation*}
        then $k_w \in PW(\pi \Delta)$ and
        \begin{equation*}
            f(w) = \int_\R f(x) \, k_w(x) \, \widehat{\nu}(x) \, \d x
        \end{equation*}
        for all $f \in PW(\pi \Delta)$.
    \end{enumerate}
    
\end{lemma}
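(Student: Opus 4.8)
The plan is to prove the three parts of Lemma~\ref{lem:functionalodesimple} in order, using the structure of the convolution equation \eqref{eq:functionaleq} to reduce it to a second-order linear ODE with boundary conditions. First, for part~(i), I would reformulate the integral equation as follows: write $K(\xi) := \int_{-\Delta}^{\Delta} u_w(\xi-\alpha)|\alpha|\,\d\alpha$ and note that, since $u_w$ is supported in $[-\frac{\Delta}{2},\frac{\Delta}{2}]$, the convolution $K$ only involves values of $|\alpha|$ for $|\alpha| \le \Delta$, so the kernel $|\alpha|$ behaves like the (even) function whose second distributional derivative is $2\boldsymbol{\delta}$ minus boundary contributions. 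Concretely, differentiating the equation $c_1 u_w(\xi) + c_2 K(\xi) = e^{-2\pi i w\xi}$ twice in $\xi$ and using $\frac{\d^2}{\d\xi^2}|\xi| = 2\boldsymbol{\delta}(\xi)$ on the relevant range yields $c_1 u_w''(\xi) + 2 c_2 u_w(\xi) = -4\pi^2 w^2 e^{-2\pi i w\xi}$ on $(-\frac{\Delta}{2},\frac{\Delta}{2})$. This is where the particular solution $c(w) e^{-2\pi i w\xi}$ and the homogeneous solutions $\cos(\sqrt{2c_2/c_1}\,\xi)$, $\sin(\sqrt{2c_2/c_1}\,\xi)$ in \eqref{eq:ftrk} come from, and the resonance at $w = \pm\frac{1}{\pi}\sqrt{\frac{c_2}{2c_1}}$ is exactly when $-4\pi^2 w^2 = -2c_2/c_1$.

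For part~(ii), having reduced to the ODE, I would determine the two free constants $a(w), b(w)$ by imposing the correct boundary/compatibility conditions: the original (undifferentiated) equation must hold, not just its second derivative, so I would plug the general solution back into $c_1 u_w + c_2 K = e^{-2\pi i w\xi}$ and evaluate at, say, $\xi = \pm\frac{\Delta}{2}$ (or use the value and first derivative of the equation at a single endpoint). The cancellation of the "lost" information under double differentiation is captured by two linear equations in $a(w),b(w)$; solving that $2\times 2$ system produces the stated formulas, with the denominator $2\cos(\sqrt{c_2/2c_1}\,\Delta) + \sqrt{2c_2/c_1}\,\Delta\sin(\sqrt{c_2/2c_1}\,\Delta)$ being the relevant determinant. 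The hypothesis $\frac{c_2}{c_1}\Delta^2 < 2$ should be exactly what guarantees this determinant is nonzero (equivalently $\sqrt{c_2/2c_1}\,\Delta < 1$, keeping the cotangent-type expressions in the first quadrant), which also secures uniqueness in part~(i); existence then follows by exhibiting the explicit solution. The resonant case $w = \pm\frac{1}{\pi}\sqrt{c_2/2c_1}$ is handled by a limiting argument or by directly redoing the ODE with the resonant particular solution $\xi e^{-2\pi i w\xi}$.

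For part~(iii), the point is essentially bookkeeping: by construction $u_w = \mathscr{F}[\overline{K_\nu(w,\cdot)}]$ solves \eqref{eq:functionaleq}, and $k_w(z) = \int_{-\Delta/2}^{\Delta/2} u_w(\alpha) e^{2\pi i\alpha z}\,\d\alpha$ is the inverse Fourier transform of a compactly supported $L^2$ function, hence lies in $PW(\pi\Delta)$ by Paley--Wiener. Reversing the computation that led to \eqref{eq:functionaleq}, for any $f \in PW(\pi\Delta)$ with $\widehat{f}$ supported in $[-\frac{\Delta}{2},\frac{\Delta}{2}]$ one has $\int_\R f(x) k_w(x)\widehat{\nu}(x)\,\d x = \int_{-\Delta/2}^{\Delta/2} \widehat{f}(\xi)\,(u_w * \nu)(\xi)\,\d\xi = \int_{-\Delta/2}^{\Delta/2}\widehat{f}(\xi) e^{2\pi i w\xi}\,\d\xi = f(w)$ by Fourier inversion, where the middle equality uses Fubini/Plancherel and the fact that $u_w$ is real in the relevant sense (or simply tracks conjugates carefully). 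By the uniqueness of the reproducing kernel this also identifies $k_w$ with $K_\nu(w,\cdot)$ up to conjugation, which is how Theorem~\ref{thm:fullexpressionkwz} and the formula for $K_\nu(0,0)$ are then read off.

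The main obstacle I anticipate is part~(ii): carefully extracting the two linear boundary conditions that pin down $a(w)$ and $b(w)$. The subtlety is that differentiating the convolution equation twice discards information, and one must correctly account for the boundary terms produced when $\frac{\d^2}{\d\xi^2}$ hits $\int_{-\Delta}^{\Delta} u_w(\xi-\alpha)|\alpha|\,\d\alpha$ — these come from the kink of $|\alpha|$ at $0$ and from the endpoints $\pm\Delta$ of integration interacting with the support of $u_w$. Getting the signs and the exact combination right, and then verifying that the resulting closed-form $u_w$ genuinely satisfies the original integral equation (not merely the ODE), is the delicate computational heart of the argument; everything else is either standard Fourier analysis or routine linear algebra.
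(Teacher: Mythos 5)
Your plan for parts (ii) and (iii) runs along essentially the same lines as the paper: reduce the convolution equation to the second--order ODE $c_1 u'' + 2c_2 u = -4\pi^2 w^2 e^{-2\pi i w \xi}$, extract two linear conditions on the free constants, solve, verify the resulting closed form actually satisfies the integral equation (not just the ODE), and then read off (iii) from Plancherel and Fourier inversion. The paper's boundary conditions are obtained by evaluating the equation and its first derivative at $\xi=0$ (rather than at $\pm\Delta/2$), which makes the "integrate back" verification particularly clean; your endpoint variant would work too but changes the algebra.

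Your approach to part (i), however, diverges from the paper and has a real gap. The paper proves existence and uniqueness at the $L^2$ level directly: it rewrites the equation as $u + Tu = \tfrac{1}{c_1}e^{-2\pi i w \xi}$ with $T$ a compact (Hilbert--Schmidt) operator, invokes Fredholm's alternative, and kills the homogeneous equation with an $L^\infty$ contraction estimate, $\|u\|_{L^\infty} \le \tfrac{c_2}{c_1}\tfrac{\Delta^2}{2}\|u\|_{L^\infty}$, which is where the hypothesis $\tfrac{c_2}{c_1}\Delta^2<2$ is used. You instead propose to deduce uniqueness from the non-vanishing of the $2\times 2$ determinant. This is insufficient as stated: the determinant argument only rules out nontrivial \emph{ODE} solutions with the right boundary data, whereas (i) asserts uniqueness among all $L^2$ functions. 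To bridge that gap you would need a regularity bootstrap showing that every $L^2$ solution of the integral equation is automatically smooth (hence solves the ODE and satisfies the boundary conditions); this is true and straightforward — the right-hand side of $c_1 u(\xi) = e^{-2\pi i w\xi} - c_2\int u(\alpha)|\xi-\alpha|\,\d\alpha$ is absolutely continuous whenever $u \in L^2$, so one bootstraps — but you do not mention it, and without it your uniqueness claim does not follow. Moreover, your assertion that $\tfrac{c_2}{c_1}\Delta^2<2$ is "exactly" the condition for the determinant to be nonzero is incorrect: writing $t=\sqrt{c_2/2c_1}\,\Delta$, the denominator $2\cos t + 2t\sin t$ stays positive for all $t$ up to roughly $2.03$ (where $\cot t = -t$ is first reached), whereas the stated hypothesis only gives $t<1$. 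The hypothesis is calibrated to the Fredholm contraction, not to the determinant, and the paper's route therefore also explains \emph{why} that threshold and not another appears in the lemma.
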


\begin{proof}
    Henceforth, we will use the abbreviation $I = [-\frac{\Delta}{2},\frac{\Delta}{2}]$. In the first part, we are asserting the existence and uniqueness  of solutions to the functional equation
    \begin{equation}\label{eq:operatorfunctionaleq}
        u + T u = \frac{1}{c_1} e^{-2 \pi i w \xi} 
    \end{equation}
    in $L^2(I)$,  where $T: L^2(I) \to L^2(I)$ is the operator given by
    \begin{equation*}
        (T u) (\xi) := \frac{c_2}{c_1} \int_{-\Delta/2}^{\Delta/2} u(\alpha) |\xi - \alpha| \, \d \alpha.
    \end{equation*}
    Now, $T$ is a Hilbert--Schmidt operator, since the kernel $S(x,y) = |x-y|$ is in $L^2(I\times I)$. The operator $T$ is therefore compact so, by Fredholm's alternative, there exists a unique solution to (\ref{eq:operatorfunctionaleq}) if and only if the equation
    \begin{equation*}
         u + T u = \mathbf{0}
    \end{equation*}
    only admits the trivial solution. Suppose $u \in L^2(I)$ is a solution to the above equation, i.e.,
    \begin{equation*}
        u(\xi) = - \frac{c_2}{c_1} \int_{-\Delta/2}^{\Delta/2} u(\alpha) |\xi - \alpha| \, \d \alpha
    \end{equation*}
    for a.e. $\xi \in I$. It follows that $u \in L^\infty(I)$ by, say, the Cauchy--Schwarz inequality, and that
    \begin{equation*}
        |u(\xi)| \leq \| u\|_{L^{\infty}(I)} \frac{c_2}{c_1} \int_{-\Delta/2}^{\Delta/2} |\xi - \alpha| \, \d \alpha
    \end{equation*}
    for a.e. $\xi$, yielding
    \begin{equation*}
        \| u\|_{L^{\infty}(I)} \leq  \| u\|_{L^{\infty}(I)} \, \frac{c_2}{c_1} \, \sup_{\xi \in I} \int_{-\Delta/2}^{\Delta/2} |\xi - \alpha| \, \d \alpha = \frac{c_2}{c_1} \frac{\Delta^2}{2}\| u\|_{L^{\infty}(I)},
    \end{equation*}
    which, for the admissible values of $c_1, c_2,\text{ and } \Delta$, is only possible when $u$ is trivial, proving (i).

    Now, formally differentiating equation \eqref{eq:functionaleq}, we get
    \begin{equation}\label{eq:derivfunctionaleq}
        c_1 u'(\xi) + c_2 \int_{-\Delta}^{\Delta} u'(\xi-\alpha) \lvert \alpha\rvert\, \d\alpha = -2\pi i w e^{-2\pi i w \xi}.
    \end{equation}
    Doing it once more yields the ordinary differential equation
    \begin{equation}\label{eq:1stode}
         c_1 u''(\xi) + 2 c_2 u(\xi) = -4\pi^2w^2 e^{-2\pi i w \xi}.
    \end{equation}
    In turn, a solution to the above ODE for $\xi \in I$, subject to the boundary conditions (obtained by plugging in $\xi = 0$ in the expressions \eqref{eq:functionaleq} and \eqref{eq:derivfunctionaleq})
    \begin{align}
        \begin{cases}\label{eq:1stodeBC}
            c_1 u(0) + c_2 \int_{-\Delta/2}^{\Delta/2} u(\alpha) |\alpha| \, \d \alpha = 1; \\
            c_1 u'(0) - c_2 \int_{-\Delta/2}^{\Delta/2} u(\alpha) \operatorname{sgn}{(\alpha)}\, \d \alpha = -2 \pi i w,
        \end{cases}
    \end{align}
    is a solution to the functional equation and, therefore, its only solution. Indeed, integrating the ODE \eqref{eq:1stode} from $0$ to a $\xi \in (0,\frac{\Delta}{2})$,
    \begin{align*}
        c_1 u'(\xi) + 2 c_2 \int_0^\xi u(\alpha) \, \d\alpha = -2 \pi i w e^{-2 \pi i w \xi} + 2 \pi i w  + c_1 u'(0)
    \end{align*}
    and, integrating again,
    \begin{align*}
        c_1 u(\xi) + 2 c_2 \int_0^\xi \int_0^\alpha u(\beta) \, \d\beta \d\alpha = e^{-2 \pi i w \xi} - 1 + 2 \pi i w \xi + c_1 u(0) + c_1 u'(0) \xi 
    \end{align*}
    which, by Fubini's theorem, is
    \begin{equation*}
        c_1 u(\xi) + 2 c_2 \int_0^\xi (\xi - \beta) u(\beta) \, \d\beta = e^{-2 \pi i w \xi} - 1 + 2 \pi i w \xi  + c_1 u(0) + c_1 u'(0) \xi.
    \end{equation*}
    Applying the boundary conditions, we get
    \begin{align*}
        c_1 u(\xi) + 2 c_2 \int_0^\xi (\xi - \beta) u(\beta) \, \d\beta = e^{-2 \pi i w \xi}  -  c_2 \int_{-\Delta/2}^{\Delta/2} u(\alpha) |\alpha| \, \d \alpha + c_2 \int_{-\Delta/2}^{\Delta/2} \xi u(\alpha) \operatorname{sgn}{(\alpha)} \d \alpha,
    \end{align*}
    which, after rearranging, is exactly
    \begin{equation*}
        c_1 u(\xi) + c_2 \int_{-\Delta/2}^{\Delta/2} u(\alpha) |\xi - \alpha| \, \d \alpha =  e^{-2\pi i w \xi},
    \end{equation*}
    and the same can be done for negative $\xi$, proving the assertion that the solution of the ODE is a solution to the functional equation. 
    
    Now it remains to be shown that the expression in (\ref{eq:ftrk}) solves the ODE with the boundary conditions \eqref{eq:1stodeBC} to conclude the proof of (ii). But a solution to \eqref{eq:1stode} must be of the form
    \begin{equation*}
        u(\xi)=A\cos{\left(\sqrt{\frac{2c_2}{c_1}}\xi\right)} +B\sin{\left(\sqrt{\frac{2c_2}{c_1}}\xi\right)} + \frac{2\pi^2w^2}{2c_1\pi^2w^2-c_2}e^{-2\pi i w \xi}
    \end{equation*}
    for all $\xi \in I$ for some coefficients $A$ and $B$. We use \eqref{eq:1stodeBC} to identify the coefficients, whence it follows $A = a(w)$ and $B = b(w)$ exactly.

     Now suppose $f \in PW(\pi \Delta)$ and let $k_w$ be defined as before. Since $u_w$ is in $L^2(I)$, it is clear that $k_w \in PW(\pi \Delta)$. By applying Plancherel's formula and Fourier inversion,
    \begin{align*}
        \int_{\mathbb{R}}f(x) &k_w(x) \, \widehat{\nu}(x) \,\d x = \int_{-\Delta/2}^{\Delta/2} \widehat{f}(\xi) \, \mathscr{F}^{-1} [k_w \cdot \widehat{\nu} \,] (\xi) \, \d \xi \\
        &= \int_{-\Delta/2}^{\Delta/2} \widehat{f}(\xi) \left\{ c_1 u_w(-\xi) + c_2 \int_{-\Delta}^{\Delta} u_w(-\xi-\alpha) \lvert\alpha\rvert\, d\alpha \right\} \, \d \xi \\
        &= \int_{-\Delta/2}^{\Delta/2} \widehat{f}(\xi) \,  e^{2 \pi i \xi w}  \, \d \xi \\
        &= f(w),
    \end{align*}
    proving item (iii) of the lemma. 
\end{proof}
As per the lemma, Theorem \ref{thm:fullexpressionkwz} will now follow by taking the inverse Fourier transform of \eqref{eq:ftrk} to obtain
\begin{equation*}
   k_w(z) = a(w)  q(z) + b(w) r(z) + c(w) \frac{\sin\pi \Delta(z -  w)}{\pi(z - w)},
\end{equation*}
and taking $K_\nu(w,z) = \overline{ k_w(\bar z)}$, which gives the desired expression whenever $w \neq \pm \frac{1}{\pi} \sqrt{\frac{c_2}{2 c_1}}$. However, we know that $K_\nu(w,z)$ always exists when $\frac{c_2}{c_1}\Delta^2 \leq 5/3$ and that
\begin{equation*}
    K_\nu(w,z) = \langle K_\nu(w, \cdot), K_\nu(z, \cdot)  \rangle = \overline{ \langle K_\nu(z, \cdot),  K_\nu(w, \cdot)  \rangle} =  \overline{K_\nu(z, w)},
\end{equation*}
which means that, since $K_\nu$ is holomorphic in the first variable, it must be anti-holomorphic in the second variable. In particular, the limit as $w \to \pm \frac{1}{\pi} \sqrt{\frac{c_2}{2 c_1}}$ of
\begin{equation*}
    a(\bar w)  q(z) + b(\bar w) r(z) + c(\bar w) \frac{\sin\pi \Delta(z - \bar w)}{\pi(z - \bar w)}
\end{equation*}
must always exist, which yields the full theorem.

\subsection{Proof of Theorem \ref{thm:k00secondcase}} It follows from the hypothesis of the Theorem \ref{thm:k00secondcase} that we can apply Theorem \ref{thm:rkhs}. Hence, $\mathcal{H}_\nu$ is a reproducing kernel Hilbert space. 

Let $\eta_1$ and $\eta_2$ be the solutions of the equation
\begin{equation}\label{eq:characteristic}
     \eta^4 +2\left(\frac{c_2}{c_1}- c_3^2\right)\eta^2+c_3^2\left(2\frac{c_2}{c_1} + c_3^2\right)=0,
\end{equation}
such that $\eta_1+\eta_2\neq 0$.  Without loss of generality we assume $\operatorname{Re}(\eta_1)\geq 0$ and $\operatorname{Re}(\eta_2)\geq 0$. Also, define 
\begin{equation*}
    \mu :=\frac{c_3^2}{2c_2+c_3^2 c_1}
\end{equation*}
and the following auxiliary function
\begin{equation*}
    C(\eta,z):=\frac{4\pi z \sin{(\pi \Delta z)}\cosh{\frac{\Delta \eta}{2}}+2\eta\cos{(\pi \Delta z)\sinh{\frac{\Delta \eta}{2}}}}{4\pi^2z^2+\eta^2}.
\end{equation*}
Note that, for any fixed $\eta$, the values $C(\eta, \pm i\frac{\eta}{2\pi})$ are understood as the limits $\lim\limits_{z\to \pm i\frac{\eta}{2\pi}}C(\eta, z)$.
\begin{theorem}\label{thm:reprokernelkzero}
   Under the hypotheses of Theorem \ref{thm:k00secondcase} we have the reproducing kernel
   \begin{equation}\label{eq:reprokernelkzero}
   \begin{split}
        K_\nu(0, z)=\left(\frac{1}{c_1}-A(0)\mu\right)&\frac{C(\eta_1, z)B(\eta_2)-C(\eta_2, z)B(\eta_1)}{A(\eta_1)B(\eta_2)-A(\eta_2)B(\eta_1)}\\&+\left(\frac{c_3^2}{c_1}+B(0)\mu\right)\frac{C(\eta_1, z)A(\eta_2)-C(\eta_2, z)A(\eta_1)}{A(\eta_1)B(\eta_2)-A(\eta_2)B(\eta_1)}
        +\mu\frac{\sin{(\pi \Delta z)}}{\pi z} 
   \end{split}
   \end{equation}
of the space $\mathcal{H}_\nu$, where the expression is interpreted as a limit whenever $\eta_1=\eta_2$.
\end{theorem}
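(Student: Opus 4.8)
The plan is to follow the template of the proof of Theorem~\ref{thm:fullexpressionkwz}, solving the functional equation \eqref{eq:functionaleq} in the case $w = 0$. Write $I = [-\tfrac{\Delta}{2}, \tfrac{\Delta}{2}]$ and $g(t) = |t|e^{-c_3|t|}$. Since $\tfrac{c_2}{c_1}\Delta^2 \le 5/3$, Theorem~\ref{thm:rkhs} gives $\mathcal{H}_\nu = PW(\pi\Delta)$, so the reduction at the start of Section~\ref{sec:computing-rks} applies: if $u_0 \in L^2(I)$ is the (unique) function supported in $I$ with
\[
 c_1 u_0(\xi) + c_2 \int_{-\Delta}^{\Delta} u_0(\xi-\alpha)\,|\alpha| e^{-c_3|\alpha|}\,\d\alpha = 1 \qquad \text{for a.e. } \xi\in I,
\]
then $\overline{K_\nu(0,\cdot)} = \mathscr{F}^{-1}[u_0]$, hence $K_\nu(0,z) = \overline{k_0(\bar z)}$ with $k_0(z) := \int_I u_0(\alpha)e^{2\pi i\alpha z}\,\d\alpha$. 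So it suffices to prove Lemma~\ref{lem:functionalodeexp} (the $c_3 > 0$ counterpart of Lemma~\ref{lem:functionalodesimple}) describing $u_0$, and then read off \eqref{eq:reprokernelkzero} by Fourier inversion.

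Existence and uniqueness of $u_0$ go exactly as in Lemma~\ref{lem:functionalodesimple}(i): the equation is $u_0 + Tu_0 = c_1^{-1}$ for the compact Hilbert--Schmidt operator $Tu(\xi) = \tfrac{c_2}{c_1}\int_I u(\alpha)|\xi-\alpha|e^{-c_3|\xi-\alpha|}\,\d\alpha$, whose homogeneous equation has only the trivial solution because $\|T\|_{L^\infty(I)\to L^\infty(I)} \le \tfrac{c_2}{c_1}\tfrac{\Delta^2}{2} \le \tfrac{5}{6} < 1$. As $g$ is even and the right-hand side constant, uniqueness forces $u_0$ to be even and real-valued.

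To make $u_0$ explicit I would turn the integral equation into an ODE. A distributional computation gives $(\partial^2 - c_3^2)g = 2\delta_0 - 2c_3 e^{-c_3|t|}$ and hence $(\partial^2 - c_3^2)^2 g = 2(\delta_0'' + c_3^2\delta_0)$; convolving the equation with $(\partial^2 - c_3^2)^2$ therefore yields, on the interior of $I$ (which also forces $u_0$ to be smooth there),
\[
 c_1 u_0'''' + 2(c_2 - c_1 c_3^2)\,u_0'' + c_3^2(2c_2 + c_1 c_3^2)\, u_0 = c_3^4,
\]
with characteristic equation \eqref{eq:characteristic} and the constant $\mu$ as a particular solution. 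Since $u_0$ is even, $u_0 = \alpha_1\cosh(\eta_1\xi) + \alpha_2\cosh(\eta_2\xi) + \mu$ on $I$ (with the confluent form when $\eta_1 = \eta_2$). Conversely, if $u_0$ is an even solution of this ODE, then the defect $\Psi := c_1 u_0 + c_2(u_0 * g) - 1$ lies in $\ker(\partial^2 - c_3^2)^2$ restricted to even functions, a two-dimensional space annihilated precisely by $\Psi(0) = 0$ and $\Psi''(0) = 0$ (using $c_3 \neq 0$); imposing these recovers the functional equation. Writing them out --- the equation at $\xi = 0$ and its (distributional) second derivative at $\xi = 0$ --- and substituting the ansatz produces a $2\times 2$ linear system for $(\alpha_1,\alpha_2)$ whose coefficient matrix has rows $(A(\eta_1), A(\eta_2))$ and $(B(\eta_1), B(\eta_2))$ and whose right-hand side is expressed through $\tfrac{1}{c_1} - A(0)\mu$ and $\tfrac{c_3^2}{c_1} + B(0)\mu$, the functions \eqref{eq:auxiliaryA}--\eqref{eq:auxiliaryB} being defined precisely so that these combinations occur. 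Solving by Cramer's rule --- the determinant $A(\eta_1)B(\eta_2) - A(\eta_2)B(\eta_1)$ being nonzero for $\eta_1 \neq \eta_2$ since $u_0$ is unique --- and using $\int_I \cosh(\eta\alpha)e^{2\pi i\alpha z}\,\d\alpha = C(\eta, z)$ and $\int_I e^{2\pi i\alpha z}\,\d\alpha = \tfrac{\sin(\pi\Delta z)}{\pi z}$ gives $k_0(z) = \alpha_1 C(\eta_1,z) + \alpha_2 C(\eta_2, z) + \mu\tfrac{\sin(\pi\Delta z)}{\pi z}$; since $u_0$ is real and even, $K_\nu(0,z) = \overline{k_0(\bar z)} = k_0(z)$, which is \eqref{eq:reprokernelkzero}. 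The degenerate case $\eta_1 = \eta_2$ and the apparent poles of $C(\eta,z)$ at $z = \pm i\eta/2\pi$ are handled by continuity, exactly as in Theorem~\ref{thm:fullexpressionkwz}.

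To finish, one checks as in Lemma~\ref{lem:functionalodesimple}(iii) that $k_0 \in PW(\pi\Delta)$ and reproduces point evaluation at $0$ against $\widehat\nu$, confirming that the function produced is genuinely the slice $K_\nu(0,\cdot)$ of the reproducing kernel; specializing to $z = 0$ then yields Theorem~\ref{thm:k00secondcase}. I expect the main obstacle to be the boundary-condition bookkeeping: one must carefully track the $\delta$-contributions from differentiating $|\alpha|e^{-c_3|\alpha|}$ (which is continuous but not $C^1$), verify that the two conditions collapse to exactly the combinations encoded by $A$ and $B$, and check that all the confluent and removable-singularity limits are well behaved. None of this is conceptually deep, but it is where errors in signs and stray terms are likely to creep in.
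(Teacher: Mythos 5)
Your proposal is correct and follows the same overall architecture as the paper: reduce the reproducing-kernel identity to the integral equation for $u_0 = \mathscr F[\overline{K_\nu(0,\cdot)}]$, convert that into a fourth-order ODE with boundary data, solve the resulting $2\times 2$ linear system for the coefficients of $\cosh(\eta_j\xi)$, and Fourier-invert. Two of your steps, however, are genuine improvements worth flagging. First, deriving the ODE by convolving with $(\partial^2 - c_3^2)^2$ using the distributional identity $(\partial^2-c_3^2)^2\bigl(|t|e^{-c_3|t|}\bigr) = 2(\delta_0'' + c_3^2\delta_0)$ is tidier than the paper's four explicit differentiations of the integral equation (and it simultaneously explains why exactly the conditions $\Psi(0)=\Psi''(0)=0$, with evenness, characterize the solution, since the even part of $\ker(\partial^2-c_3^2)^2$ is two-dimensional when $c_3\ne 0$). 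Second, and more substantively, you deduce the non-vanishing of the determinant $A(\eta_1)B(\eta_2)-A(\eta_2)B(\eta_1)$ from the Fredholm uniqueness of $u_0$ together with the injectivity of $(T_1,T_2)\mapsto T_1\cosh(\eta_1\xi)+T_2\cosh(\eta_2\xi)+\mu$ (valid since $\eta_1\ne\pm\eta_2$); the paper instead proves this non-vanishing by the explicit estimates of Lemma \ref{lem:nonvanishing} in the appendix, so your soft argument bypasses that entire computation at no cost in the parameter range $\tfrac{c_2}{c_1}\Delta^2\le 5/3$. The remaining bookkeeping (verifying that $\Psi(0)=0$ and $\Psi''(0)=0$ unpack to the rows $(A(\eta_1),A(\eta_2))$ and $(B(\eta_1),B(\eta_2))$ against the right-hand sides $\tfrac{1}{c_1}-A(0)\mu$ and $-\tfrac{c_3^2}{c_1}-B(0)\mu$, that $\int_{-\Delta/2}^{\Delta/2}\cosh(\eta\alpha)e^{2\pi i\alpha z}\,\d\alpha = C(\eta,z)$, and that $u_0$ real and even gives $K_\nu(0,z)=\overline{k_0(\bar z)}=k_0(z)$) all checks out against the paper.
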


In particular, when $z = 0$ we have \eqref{eq:k00exp}, concluding the proof of Theorem \ref{thm:k00secondcase}.
The proof of Theorem \ref{thm:reprokernelkzero} is analogous to the proof of Theorem \ref{thm:fullexpressionkwz} and it follows from the lemma below.
\begin{lemma}\label{lem:functionalodeexp}
    Let $c_1,c_2,c_3 > 0$, $w \in \C$, and  $\Delta > 0$. When $\frac{c_2}{c_1}\Delta^2 < 2 $ the following statements hold.
    \begin{enumerate}[(i)]
        \item There exists a unique function $u_w \in L^2(\R)$ with $\mathrm{supp} \, u_w \subseteq [-\frac{\Delta}{2},\frac{\Delta}{2}]$ such that
        \begin{equation}\label{eq:integralequationforexponential}
           c_1 u_w(\xi) + c_2 \int_{-\Delta}^{\Delta} u_w(\xi-\alpha) \lvert\alpha\rvert\ e^{-c_3 \lvert\alpha\rvert } \d\alpha = e^{- 2\pi i w \xi} 
        \end{equation}
        for almost all  $\xi \in [-\frac{\Delta}{2},\frac{\Delta}{2}]$. 
        \item The function $u_w$ is a solution of the ODE
        \begin{equation*}
                c_1 u^{(4)}(\xi) + 2(c_2 - c_1 c_3^2)u''(\xi) + (2 c_2 c_3^2 + c_1 c_3^4) u(\xi) =  (4 \pi^2 w^2 + c_3^2)^2 e^{-2 \pi i w \xi},
        \end{equation*} 
        subject to the conditions
        \begin{equation*}
            \begin{cases}
                c_1 u(0) + c_2 \int_{-\Delta/2}^{\Delta/2} u(\alpha)|\alpha|e^{-c_3 |\alpha|} \, \d\alpha = 1 ;\\
                c_1 u'(0) - c_2 \int_{-\Delta/2}^{\Delta/2} u(\alpha) \operatorname{sgn} \alpha \, e^{-c_3 |\alpha|}\, \d\alpha + c_2 c_3 \int_{-\Delta/2}^{\Delta/2} u(\alpha) \alpha \, e^{-c_3 |\alpha|} \, \d\alpha  = -2 \pi i w ;\\
                c_1 u''(0) + (2 c_2 - c_1 c_3^2 ) u(0) - 2 c_2 c_3 \int_{-\Delta/2}^{\Delta/2} u(\alpha) e^{-c_3 |\alpha|} \, \d\alpha = -4 \pi^2 w^2 - c_3^2;\\
                c_1 u'''(0) - (c_1 c_3^2 - 2 c_2) u'(0) -  2 c_2 c_3^2 \int_{-\Delta/2}^{\Delta/2} u(\alpha) \operatorname{sgn} \alpha \, e^{-c_3 |\alpha|}\, \d\alpha = (2 \pi i w)(4 \pi^2 w^2 + c_3^2),
            \end{cases}
        \end{equation*}
        in the interval $[-\frac{\Delta}{2},\frac{\Delta}{2}]$ and identically zero outside it.
        \item  If we define the entire function
            \begin{equation*}
                k_w(z) := \int_{-\Delta/2}^{\Delta/2} u_w(\alpha) e^{2 \pi i \alpha z} \, \d\alpha,
            \end{equation*}
            then $k_w \in PW(\pi \Delta)$ and
        \begin{equation*}
            f(w) = \int_\R f(x) \, k_w(x) \, \widehat{\nu}(x)\, \d x
        \end{equation*}
        for all $f \in PW(\pi \Delta)$.
    \end{enumerate}
\end{lemma}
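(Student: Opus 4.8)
The plan is to follow the blueprint of the proof of Lemma~\ref{lem:functionalodesimple}; the one genuinely new feature is that the convolution kernel $\phi(t):=\lvert t\rvert e^{-c_3\lvert t\rvert}$ is no longer annihilated away from the origin by a second-order constant-coefficient operator, but only by the fourth-order operator $(\partial_\xi^2-c_3^2)^2$, and this is exactly what promotes the second-order ODE of Lemma~\ref{lem:functionalodesimple} to the fourth-order ODE in part~(ii). Throughout write $I:=[-\tfrac{\Delta}{2},\tfrac{\Delta}{2}]$. For part~(i) I would recast \eqref{eq:integralequationforexponential} as the $L^2(I)$ equation $u+Tu=\tfrac{1}{c_1}e^{-2\pi i w\xi}$, where $T$ is the integral operator on $L^2(I)$ with kernel $\tfrac{c_2}{c_1}\lvert x-y\rvert e^{-c_3\lvert x-y\rvert}$. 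Since $\lvert x-y\rvert e^{-c_3\lvert x-y\rvert}\le\lvert x-y\rvert\in L^2(I\times I)$, the operator $T$ is Hilbert--Schmidt, hence compact, so by Fredholm's alternative unique solvability reduces to the triviality of the kernel of $\mathrm{Id}+T$. Exactly as in Lemma~\ref{lem:functionalodesimple}, a solution of $u+Tu=\mathbf{0}$ lies in $L^\infty(I)$ by Cauchy--Schwarz and then satisfies
\[
\|u\|_{L^\infty(I)}\le\frac{c_2}{c_1}\,\Big(\sup_{\xi\in I}\int_{-\Delta/2}^{\Delta/2}\lvert\xi-\alpha\rvert e^{-c_3\lvert\xi-\alpha\rvert}\,\d\alpha\Big)\,\|u\|_{L^\infty(I)}\le\frac{c_2}{c_1}\,\frac{\Delta^2}{2}\,\|u\|_{L^\infty(I)},
\]
forcing $u\equiv 0$ since $\tfrac{c_2}{c_1}\Delta^2<2$.

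The computational heart of part~(ii) is the distributional identity $(\partial_t^2-c_3^2)^2\phi=2\,\delta''+2c_3^2\,\delta$ (here $\delta$ denotes the Dirac delta at the origin), which I would obtain by composing the two one-step identities $(\partial_t^2-c_3^2)\phi=2\delta-2c_3 e^{-c_3\lvert t\rvert}$ and $(\partial_t^2-c_3^2)\big[e^{-c_3\lvert t\rvert}\big]=-2c_3\delta$, each verified by a short direct computation keeping track of the jump of the first derivative at $0$. Writing \eqref{eq:integralequationforexponential} as $c_1 u_w+c_2(u_w*\phi)=e^{-2\pi i w\xi}$ on $I$ and applying $(\partial_\xi^2-c_3^2)^2$ on the open interval (after the routine bootstrap making $u_w$ smooth there) converts the convolution term into $2c_2 u_w''+2c_2 c_3^2 u_w$ and the exponential into $(4\pi^2w^2+c_3^2)^2 e^{-2\pi i w\xi}$, yielding exactly the stated ODE, whose characteristic equation is \eqref{eq:characteristic}. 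The four boundary conditions come from evaluating the functional equation and its first three $\xi$-derivatives at $\xi=0$: the zeroth derivative gives the first condition; differentiating $\phi$ once produces the $\operatorname{sgn}$ and linear terms and gives the second; the next two derivatives bring in the Dirac masses of $\phi''$ and $\phi'''$, and after feeding the lower-order conditions back in to eliminate $c_2\int_{-\Delta/2}^{\Delta/2}u_w(\alpha)\lvert\alpha\rvert e^{-c_3\lvert\alpha\rvert}\,\d\alpha$ and using the second condition once more to recombine the $\operatorname{sgn}$-integrals, one is left with precisely the third and fourth conditions as stated.

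For the fact that the ODE together with these four (nonlocal) conditions pins $u_w$ down --- which is what the subsequent proof of Theorem~\ref{thm:reprokernelkzero} will use --- I would take any solution $u$, put $g:=c_1 u+c_2(u*\phi)-e^{-2\pi i w\xi}$ on $I$, note that the same distributional identity gives $(\partial_\xi^2-c_3^2)^2 g=0$ on the interior of $I$, and observe that the four conditions are equivalent to $g(0)=g'(0)=g''(0)=g'''(0)=0$; hence $g\equiv 0$ on $I$, $u$ solves \eqref{eq:integralequationforexponential}, and $u=u_w$ by part~(i). Part~(iii) is then formally identical to its counterpart in Lemma~\ref{lem:functionalodesimple}: $u_w\in L^2(I)$ makes $k_w\in PW(\pi\Delta)$ by Paley--Wiener, and for $f\in PW(\pi\Delta)$ Plancherel's formula and the convolution theorem reduce $\int_\R f(x)\,k_w(x)\,\widehat{\nu}(x)\,\d x$ to $\int_{-\Delta/2}^{\Delta/2}\widehat{f}(\xi)\,\mathscr{F}^{-1}\big[k_w\widehat{\nu}\,\big](\xi)\,\d\xi$, while $\mathscr{F}^{-1}\big[k_w\widehat{\nu}\,\big](\xi)=c_1 u_w(-\xi)+c_2\int_{-\Delta}^{\Delta}u_w(-\xi-\alpha)\lvert\alpha\rvert e^{-c_3\lvert\alpha\rvert}\,\d\alpha=e^{2\pi i w\xi}$ for $\xi\in I$ by \eqref{eq:integralequationforexponential} together with the evenness of $\lvert\alpha\rvert e^{-c_3\lvert\alpha\rvert}$; Fourier inversion then returns $f(w)$.

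I expect the main obstacle to be the bookkeeping in part~(ii): correctly tracking the Dirac masses created by the jumps of $\phi'$ and by $\phi'''$ at the origin, and carrying out the cancellations in which the lower-order boundary conditions are substituted into the higher-order ones to reach the compact form displayed in the lemma. By comparison, the distributional identity for $(\partial_t^2-c_3^2)^2\phi$, the Hilbert--Schmidt and compactness claims, the smoothness bootstrap, and the Paley--Wiener membership assertions are all routine.
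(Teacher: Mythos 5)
Your proposal is correct and follows essentially the same route as the paper: Fredholm alternative plus an $L^\infty$ contraction estimate for part~(i), repeated differentiation of the integral equation and evaluation at the origin for part~(ii), and Plancherel/Fourier inversion for part~(iii). The one genuine stylistic difference is in part~(ii), where you package the four successive differentiations via the distributional identity $(\partial_t^2-c_3^2)^2\big[|t|e^{-c_3|t|}\big]=2\delta''+2c_3^2\delta$, whereas the paper computes $u_w',u_w'',u_w''',u_w^{(4)}$ explicitly (its equations \eqref{eq:diffexp}--\eqref{eq:intgeq4}) and substitutes lower-order identities along the way; both procedures perform the same arithmetic, and you correctly flag that the displayed third and fourth boundary conditions emerge only after substituting the lower-order identities back in, which is exactly what the paper's elimination of the $|\xi-\alpha|$ and $\operatorname{sgn}$-weighted integrals accomplishes. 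Your concluding observation --- that defining $g:=c_1u+c_2(u*\phi)-e^{-2\pi i w\xi}$, noting $(\partial_\xi^2-c_3^2)^2g=0$ with vanishing data at $0$, and invoking part~(i) shows the ODE-with-conditions pins $u_w$ down --- is a clean converse that the paper uses implicitly in the proof of Theorem~\ref{thm:reprokernelkzero} and states explicitly only in the $c_3=0$ case (Lemma~\ref{lem:functionalodesimple}); adding it here is harmless and arguably an improvement.
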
 
\begin{proof}
    Throughout we will use the abbreviation $I = [-\frac{\Delta}{2},\frac{\Delta}{2}]$. The proof of items (i) and (iii) are analogous to that of Lemma \ref{lem:functionalodesimple}. Now knowing that there is a solution $u_w$ to \eqref{eq:integralequationforexponential} in $L^2(I)$, we can quickly see $u_w \in C^\infty (I)$, since \eqref{eq:integralequationforexponential} is equivalent to
    \begin{align}
        \nonumber c_1 u_w(\xi) &= e^{- 2\pi i w \xi}  - c_2 \int_{-\Delta}^{\Delta} u_w(\xi-\alpha) \lvert\alpha\rvert\ e^{-c_3 \lvert\alpha\rvert } \d\alpha \\
        \nonumber&= e^{- 2\pi i w \xi}  - c_2 \int_{-\Delta/2}^{\Delta/2} u_w(\alpha) \lvert\xi-\alpha\rvert\ e^{-c_3 \lvert\xi-\alpha\rvert } \d\alpha \\
        \label{eq:diffexp}&= e^{- 2\pi i w \xi} - c_2 \int_{-\Delta/2}^{\xi} u_w(\alpha) (\xi-\alpha) e^{-c_3 (\xi-\alpha) } \d\alpha + c_2\int_{\xi}^{\Delta/2} u_w(\alpha) (\xi-\alpha) e^{c_3 (\xi-\alpha) } \d\alpha,
    \end{align}
    so that it is immediate that the integrability of $u_w$ implies that it is absolutely continuous in $I$. But then if $u_w$ is absolutely continuous, the same equation tells us it is $C^1$ with an absolutely continuous derivative by the fundamental theorem of calculus. Proceeding inductively, it follows that $u_w$ is smooth in the interval $I$.

    To obtain the ODE, it is enough to take derivatives of the expression \eqref{eq:diffexp}. By differentiating and collecting the appropriate terms, we get
    \begin{equation}\label{eq:intgeq2}
        c_1 u_w'(\xi) = -2 \pi i w e^{-2 \pi i w \xi} - c_2 \int_{-\Delta/2}^{\Delta/2} u_w(\alpha) \operatorname{sgn}(\xi-\alpha) e^{-c_3 \lvert\xi-\alpha\rvert } \d\alpha + c_2 c_3 \int_{-\Delta/2}^{\Delta/2} u_w(\alpha) (\xi-\alpha) e^{-c_3 \lvert\xi-\alpha\rvert } \d\alpha.
    \end{equation}
    Doing it again,
    \begin{equation}\label{eq:intgeq3}
        c_1 u_w''(\xi) = (-4 \pi^2 w^2  - c_3^2) e^{-2 \pi i w \xi} + (c_1 c_3^2 - 2 c_2)u_w(\xi) + 2 c_2 c_3\int_{-\Delta/2}^{\Delta/2} u_w(\alpha)  e^{-c_3 \lvert\xi-\alpha\rvert } \d\alpha,
    \end{equation}
    a third time,
    \begin{equation}\label{eq:intgeq4}
        c_1 u_w'''(\xi) = (2 \pi i w)(4 \pi^2  w^2 + c_3^2) e^{-2 \pi i w \xi} + (c_1 c_3^2 - 2 c_2)u_w'(\xi) - 2 c_2 c_3^2 \int_{-\Delta/2}^{\Delta/2} u_w(\alpha) \operatorname{sgn}(\xi - \alpha) \,   e^{-c_3 \lvert\xi-\alpha\rvert } \d\alpha,
    \end{equation}
    and a last time
    \begin{equation*}
        c_1 u_w^{(4)}(\xi) = (4 \pi^2  w^2 + c_3^2)^2 e^{-2 \pi i w \xi} + 2 (c_1 c_3^2 - c_2) u_w''(\xi) - (c_1 c_3^2 + 2 c_2)c_3^2 u_w(\xi), 
    \end{equation*}
    which is the desired equation. The boundary conditions are recovered by plugging in $\xi =0$ in the integral equations \eqref{eq:diffexp}, \eqref{eq:intgeq2}, \eqref{eq:intgeq3}, and \eqref{eq:intgeq4}, which $u_w$ and its derivatives must satisfy.
\end{proof}

\begin{proof}[Proof of Theorem \ref{thm:reprokernelkzero}]
Applying item (ii) of Lemma \ref{lem:functionalodeexp} to the case $w = 0$, we obtain that, in this case, the function $u = u_0$ is the solution to the ODE
\begin{equation}\label{eq:odeforthepositivec3}
                c_1 u^{(4)}(\xi) + 2(c_2 - c_1 c_3^2)u''(\xi) + (2 c_2 c_3^2 + c_1 c_3^4) u(\xi) =   c_3^4,
\end{equation} 
subject to the conditions
\begin{equation*}
    \begin{cases}
        c_1 u(0) + c_2 \int_{-\Delta/2}^{\Delta/2} u(\alpha)|\alpha|e^{-c_3 |\alpha|} \, \d\alpha = 1 ;\\
        c_1 u'(0) - c_2 \int_{-\Delta/2}^{\Delta/2} u(\alpha) \operatorname{sgn} \alpha \, e^{-c_3 |\alpha|}\, \d\alpha + c_2 c_3 \int_{-\Delta/2}^{\Delta/2} u(\alpha) \alpha \, e^{-c_3 |\alpha|} \, \d\alpha  = 0 ;\\
        c_1 u''(0) + (2 c_2 - c_1 c_3^2 ) u(0) - 2 c_2 c_3 \int_{-\Delta/2}^{\Delta/2} u(\alpha) e^{-c_3 |\alpha|} \, \d\alpha =  - c_3^2;\\
        c_1 u'''(0) - (c_1 c_3^2 - 2 c_2) u'(0) -  2 c_2 c_3^2 \int_{-\Delta/2}^{\Delta/2} u(\alpha) \operatorname{sgn} \alpha \, e^{-c_3 |\alpha|}\, \d\alpha = 0
    \end{cases}
\end{equation*}
in the interval $[-\frac{\Delta}{2},\frac{\Delta}{2}]$ and identically zero outside it. 

Knowing that the kernel $K_\nu(0,z)$ must be even allows us to conclude that $u$ must likewise be even, which further simplifies the boundary conditions to
\begin{equation}\label{eq:lastodeBC}
    \begin{cases}
        c_1 u(0) + c_2 \int_{-\Delta/2}^{\Delta/2} u(\alpha)|\alpha|e^{-c_3 |\alpha|} \, \d\alpha = 1 ;\\
        u'(0)  = 0;\\
        c_1 u''(0) + (2 c_2 - c_1 c_3^2 ) u(0) - 2 c_2 c_3 \int_{-\Delta/2}^{\Delta/2} u(\alpha) e^{-c_3 |\alpha|} \, \d\alpha =  - c_3^2;\\
         u'''(0) = 0.
    \end{cases}
\end{equation}

To emphasize what are the crucial parameters for the solution to the ODE \eqref{eq:odeforthepositivec3} we denote by $\lambda=c_2/c_1$. By rewriting the equation, we get that $u$ is a solution to the following differential equation:
\begin{equation*}
       u^{(4)}(\xi) + 2(\lambda -  c_3^2)u''(\xi) + (2 \lambda c_3^2 +  c_3^4) u(\xi) =   \frac{c_3^4}{c_1}.
\end{equation*}
Thus $u(\xi)=v(\xi)+\mu$, where $v(\xi)$ is a solution to the homogeneous ODE with constant coefficients. To obtain a representation of the function $v(\xi)$ we have to solve the corresponding characteristic equation \eqref{eq:characteristic}.
Since we are assuming $\lambda, c_3>0$, all solutions to \eqref{eq:characteristic} are different from $0$, and it follows that 
\begin{equation*}
    \eta^2=\begin{cases}
        c_3^2-\lambda\pm\sqrt{\lambda}\sqrt{\lambda-4c_3^2}, \quad \text{ if } \lambda\geq 4c_3^2;\\
        c_3^2-\lambda\pm i\sqrt{\lambda}\sqrt{4c_3^2-\lambda}, \quad \text{ if } \lambda< 4c_3^2
    \end{cases}
\end{equation*}
for the solutions $\eta$. Since the expression on the right-hand side may never be non-negative, we conclude that there are no real solutions to equation \eqref{eq:characteristic}. Suppose the numbers $\eta_1, \eta_2, -\eta_1, -\eta_2$ are all the solutions listed according to the multiplicity (i.e., $\eta_2$ may be equal to $\eta_1$). We assume $\operatorname{Re}\eta_1, \operatorname{Re}\eta_2\ge 0$. 
The following situations may occur:\\
\emph{Case 1:} $\eta_2\neq\eta_1$ $\Longleftrightarrow \lambda\neq 4c_3^2$.\\
\emph{Case 2:} $\eta_2 = \eta_1 \Longleftrightarrow \lambda=4c_3^2$.\\
Assuming $\eta_2\neq \eta_1$ we can write the general solution to the ODE \eqref{eq:odeforthepositivec3} by 
\begin{equation*}
    u(\xi)=\widetilde{T}_1 e^{\eta_1 \xi}+\widetilde{T}_2 e^{\eta_2 \xi}+\widetilde{T}_3 e^{-\eta_1 \xi}+\widetilde{T}_4 e^{-\eta_2 \xi}+\mu
\end{equation*}
for all $\xi \in [-\frac{\Delta}{2},\frac{\Delta}{2}]$. Otherwise, in the case $\eta_2=\eta_1$,  the solution can be written as 
\begin{equation*}
     u(\xi)=(\widetilde{T}_1 +\widetilde{T}_2 \xi) e^{\eta_1 \xi}+ (\widetilde{T}_3+\widetilde{T}_4 \xi)e^{-\eta_1 \xi}+\mu 
\end{equation*}
for all $\xi \in [-\frac{\Delta}{2},\frac{\Delta}{2}]$. Note that the function $K_\nu(0, \cdot)$ is a real entire\footnote{We call a function $f: \C \to \C$ \emph{real entire} if $f$ is analytic in $\C$ and $f(x)$ is real for all $x$ real.} and even function. Thus its Fourier transform, $u(\xi)$, is a real-valued,  even function.

Suppose we are in the case where $\lambda\neq 4c_3^2$. The above-mentioned facts about $u(\xi)$ lead us to the following reduction:
\begin{equation*}
    u(\xi)= T_1 \cosh{\eta_1 \xi} +T_2\cosh{\eta_2 \xi}+\mu
\end{equation*}
for all $\xi \in [-\frac{\Delta}{2},\frac{\Delta}{2}]$, where $T_1$ and $T_2$ are appropriate complex numbers. We will identify these coefficients using the conditions in \eqref{eq:lastodeBC}, which become
\begin{align}
    \frac{1}{c_1}&= u(0) + \lambda \int_{-\Delta/2}^{\Delta/2} u(\alpha) \lvert\alpha\rvert\ e^{-c_3 \lvert\alpha\rvert } \d\alpha,\label{eq:initialdataeq1}\\
    - \frac{c_3^2}{c_1}&= u''(0) + (2 \lambda -  c_3^2 ) u(0) - 2 \lambda c_3 \int_{-\Delta/2}^{\Delta/2} u(\alpha) e^{-c_3 |\alpha|} \, \d\alpha.\label{eq:initialdataeq2}
\end{align}
Using the notation from \eqref{eq:auxiliaryA} and \eqref{eq:auxiliaryB} in the equations \eqref{eq:initialdataeq1} and \eqref{eq:initialdataeq2} we get 
\begin{equation*}
    \left\{\begin{aligned}
         \frac{1}{c_1}-A(0)\mu& = A(\eta_1) T_1+  A(\eta_2)T_2  \\
        - \frac{c_3^2}{c_1} -B(0)\mu& = B(\eta_1)T_1 +B(\eta_2)T_2.
    \end{aligned}\right.
\end{equation*}
In order to have a unique solution to this system of equations, we need to check whether the quantity 
\begin{align*}
    A(\eta_1)&B(\eta_2)-B(\eta_1)A(\eta_2)
\end{align*} is non-zero or not. Assuming $\lambda \Delta^2\leq 5/3$ we can ensure that the quantity is not zero. The details for this can be found in the appendix.

It follows from these equations that
\begin{align*}
    T_1&=\frac{1}{A(\eta_1)B(\eta_2)-B(\eta_1)A(\eta_2)}\left\{\left(\frac{1}{c_1}-A(0)\mu\right)B(\eta_2)+\left(\frac{c_3^2}{c_1}+B(0)\mu\right)A(\eta_2)\right\}\\
\intertext{and}
    T_2&=-\frac{1}{A(\eta_1)B(\eta_2)-B(\eta_1)A(\eta_2)}\left\{\left(\frac{1}{c_1}-A(0)\mu\right)B(\eta_1)+\left(\frac{c_3^2}{c_1}+B(0)\mu\right)A(\eta_1)\right\}.
\end{align*}
Thus, 
\begin{equation*}
    \begin{split}
        u(\xi)=\frac{1}{A(\eta_1)B(\eta_2)-B(\eta_1)A(\eta_2)}\left[\left\{B(\eta_2)\left(\frac{1}{c_1}-A(0)\mu\right)+A(\eta_2)\left(\frac{c_3^2}{c_1}+B(0)\mu\right)\right\}\cosh{\eta_1\xi}\right.\\
        -\left.\left\{A(\eta_1)\left(\frac{c_3^2}{c_1}+B(0)\mu\right)+B(\eta_1)\left(\frac{1}{c_1}-A(0)\mu\right)\right\}\cosh{\eta_2\xi}\right]+\mu.
    \end{split}
\end{equation*}
In the second case, one can repeat this argument with a slight modification and get the following expression
\begin{equation*}
   \begin{split}
        u(\xi)=\frac{1}{A(\eta_1)B'(\eta_1)-A'(\eta_1)B(\eta_1)}\left[\left\{B'(\eta_1)\left(\frac{1}{c_1}-A(0)\mu\right)+A'(\eta_1)\left(\frac{c_3^2}{c_1}+B(0)\mu\right)\right\}\cosh{\eta_1\xi}\right.\\
        -\left.\left\{A(\eta_1)\left(\frac{c_3^2}{c_1}+B(0)\mu\right)+B(\eta_1)\left(\frac{1}{c_1}-A(0)\mu\right)\right\}\cdot \xi\cdot \sinh{\eta_1\xi}\right]+\mu,
    \end{split}
\end{equation*}
where $A'=\frac{d A}{d\eta}$ and $B'=\frac{d B}{d\eta}$.

By taking the inverse Fourier transform of $u$, we get the expression for $k_0$ of item (iii) in Lemma \ref{lem:functionalodeexp}, which is real entire, and so it also yields the formula for $K_\nu(0,z)$ in \eqref{eq:reprokernelkzero}.
\end{proof}

\section{Appendix}
Recall the definitions of $A(\eta)$ and $B(\eta)$ from \eqref{eq:auxiliaryA} and \eqref{eq:auxiliaryB}. We prove the following:
\begin{lemma}\label{lem:nonvanishing}
    For $0< \frac{c_2}{c_1}\Delta^2\leq 2.9$ and $0 < c_3^2 \neq \frac{c_2}{4 c_1}$, we have
\begin{align*}
   \mathcal{L}:= A(\eta_1) B(\eta_2)-B(\eta_1)A(\eta_2) \neq 0,
\end{align*}
where $\eta_1$ and $\eta_2$ are two roots of \eqref{eq:characteristic} such that $\eta_1+\eta_2\neq 0$.
\end{lemma}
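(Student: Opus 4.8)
The plan is to recast the vanishing of $\mathcal{L}$ as a spectral condition for a concrete integral operator and then exclude that condition by a quadratic–form estimate. Write $\lambda := c_2/c_1$, $I := [-\Delta/2,\Delta/2]$, and let $\mathcal{K}$ be the integral operator on $L^2(I)$ with kernel $|\xi-\alpha|\,e^{-c_3|\xi-\alpha|}$; this kernel is real, symmetric and continuous, so $\mathcal{K}$ is compact and self-adjoint, with real eigenvalues accumulating only at $0$. The hypothesis $c_3^2\ne \tfrac{c_2}{4c_1}$ is exactly $\lambda\ne 4c_3^2$, which forces $\eta_1^2\ne\eta_2^2$, so $\cosh(\eta_1\xi)$ and $\cosh(\eta_2\xi)$ are linearly independent and span the even part of the homogeneous–ODE solution space from Lemma~\ref{lem:functionalodeexp}(ii). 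Running that lemma's argument in the homogeneous case shows: a nontrivial even $u\in L^2(I)$ with $c_1 u(\xi)+c_2\int_I u(\alpha)|\xi-\alpha|e^{-c_3|\xi-\alpha|}\,\d\alpha=0$ on $I$ is smooth, solves the homogeneous ODE, hence equals $T_1\cosh(\eta_1\xi)+T_2\cosh(\eta_2\xi)$ for some $(T_1,T_2)\ne(0,0)$; inserting this into the equation and its derivatives at $\xi=0$ (with all inhomogeneous terms dropped) yields precisely $A(\eta_1)T_1+A(\eta_2)T_2=0$ and $B(\eta_1)T_1+B(\eta_2)T_2=0$, and conversely any nontrivial solution of this $2\times 2$ system produces such a $u$. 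Hence $\mathcal{L}=0$ if and only if $\mathcal{K}u=-\tfrac1\lambda u$ for some even $u\ne 0$. Since $\lambda\Delta^2\le 2.9$ gives $-\tfrac1\lambda\le-\tfrac{\Delta^2}{2.9}$, it suffices to prove $\langle\mathcal{K}u,u\rangle>-\tfrac{\Delta^2}{2.9}\|u\|_{L^2(I)}^2$ for every even $u\ne 0$, which I would do by splitting on the size of $c_3$.

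For $c_3^2\Delta^2>\tfrac{2.9}{4}$ I would use Plancherel. Extending $u$ by zero and writing the kernel as a convolution, $\langle\mathcal{K}u,u\rangle=\int_\R\widehat g(\xi)|\widehat u(\xi)|^2\,\d\xi$, where $g(t)=|t|e^{-c_3|t|}$ and $\widehat g(\xi)=2\,\tfrac{c_3^2-4\pi^2\xi^2}{(c_3^2+4\pi^2\xi^2)^2}$. A single–variable computation shows $\widehat g$ attains its global minimum $-\tfrac{1}{4c_3^2}$ at $4\pi^2\xi^2=3c_3^2$, so $\langle\mathcal{K}u,u\rangle\ge-\tfrac{1}{4c_3^2}\|u\|^2>-\tfrac{\Delta^2}{2.9}\|u\|^2$ in this range.

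For the complementary range $c_3^2\Delta^2\le\tfrac{2.9}{4}$ I would perturb off $c_3=0$. Let $\mathcal{K}_0$ be the operator on $L^2(I)$ with kernel $|\xi-\alpha|$. Differentiating its eigenvalue equation twice gives $\mu v''=2v$, so a negative–eigenvalue even eigenfunction is $v(\xi)=\cos(\omega\xi)$ with $|\mu|=\Delta^2/(2\psi^2)$, $\psi:=\omega\Delta/2$, where evaluating the equation at $\xi=\Delta/2$ forces $\tan\psi=-1/\psi$. Since $\psi\mapsto\tan\psi+1/\psi$ has derivative $\sec^2\psi-\psi^{-2}>0$ on $(\tfrac\pi2,\pi)$ and is negative at $\psi=\sqrt5$, its smallest positive zero exceeds $\sqrt5$, so every negative even eigenvalue of $\mathcal{K}_0$ satisfies $|\mu|<\Delta^2/10$. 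Using $1-e^{-x}\le x$, $\|\mathcal{K}-\mathcal{K}_0\|_{\mathrm{op}}\le\|\mathcal{K}-\mathcal{K}_0\|_{\mathrm{HS}}\le c_3\big(\int_I\!\int_I|\xi-\alpha|^4\,\d\xi\,\d\alpha\big)^{1/2}=c_3\Delta^3/\sqrt{15}$. As both operators preserve parity, Weyl's inequality on the even subspace gives $\mu_{\min}^{\mathrm{even}}(\mathcal{K})\ge\mu_{\min}^{\mathrm{even}}(\mathcal{K}_0)-\|\mathcal{K}-\mathcal{K}_0\|_{\mathrm{op}}>-\tfrac{\Delta^2}{10}-\tfrac{c_3\Delta^3}{\sqrt{15}}\ge-\tfrac{\Delta^2}{10}-\tfrac{\sqrt{2.9}\,\Delta^2}{2\sqrt{15}}>-\tfrac{\Delta^2}{2.9}$, the last step being a numerical check ($0.1+0.22<0.345$). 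The two ranges cover every $c_3>0$, so the quadratic–form bound holds and $\mathcal{L}\ne0$.

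The step I expect to be the main obstacle is the small-$c_3$ regime: one must locate the most negative \emph{even} eigenvalue of the unweighted operator $\mathcal{K}_0$ sharply enough — the bound $|\mu|<\Delta^2/10$ is essentially what makes the two $c_3$–ranges overlap, whereas the cruder estimate $|\mu|\le 2\Delta^2/\pi^2$ coming just from $\psi>\pi/2$ would leave a gap. Restricting to even eigenfunctions is essential here: the full operator $\mathcal{K}$ can have negative eigenvalues from the odd sector that come much closer to $-\Delta^2/2.9$ but are irrelevant to $\mathcal{L}$; and the constant $2.9$ is essentially the best this two-regime method produces.
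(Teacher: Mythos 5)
Your proposal is correct, and it takes a genuinely different route from the paper. The paper proves the non-vanishing of $\mathcal{L}$ by brute force: it decomposes $\mathcal{L}=L_1+L_2$, splits into the cases $c_3 \lessgtr \sqrt{\lambda}/2$ according to whether the roots $\eta_i$ are purely imaginary or a conjugate pair, and then in each case bounds $L_1$ and $|L_2|$ directly via trigonometric identities (e.g.\ product-to-sum formulas and $\sin x \le x$), culminating in inequalities of the shape $\mathcal{L} \le \sqrt{\lambda}\sqrt{|\lambda-4c_3^2|}\bigl(\tfrac{\sigma^2}{32}+\tfrac{\sigma^{3/2}}{12}+\tfrac{11\sigma^{5/2}}{480}-2\bigr)<0$. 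Your proof instead recasts $\mathcal{L}=0$ as the spectral condition ``$-1/\lambda$ is an eigenvalue of the convolution operator $\mathcal{K}$ with kernel $|\xi-\alpha|e^{-c_3|\xi-\alpha|}$ on the even subspace of $L^2(I)$,'' and excludes it by a quadratic-form estimate: Plancherel with the explicit $\widehat g(\xi)=2(c_3^2-4\pi^2\xi^2)/(c_3^2+4\pi^2\xi^2)^2$ for large $c_3$, and a Weyl/Hilbert--Schmidt perturbation from the unweighted operator $\mathcal{K}_0$ for small $c_3$ together with the transcendental equation $\tan\psi=-1/\psi$ that pins down the negative even eigenvalues of $\mathcal{K}_0$. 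Your case split ($c_3^2\Delta^2\gtrless 2.9/4$) is also different from the paper's ($c_3^2\gtrless\lambda/4$). I checked the pieces: the Fourier transform and its minimum $-1/(4c_3^2)$ at $4\pi^2\xi^2=3c_3^2$, the HS estimate $\|\mathcal{K}-\mathcal{K}_0\|_{\mathrm{HS}}\le c_3\Delta^3/\sqrt{15}$ from $1-e^{-x}\le x$ and $\int_I\int_I|\xi-\alpha|^4=\Delta^6/15$, the lower bound $\psi_1>\sqrt 5$ for the first root of $\tan\psi+1/\psi$ (it is $\approx 2.798$), and the final numerics $0.1+0.2199<0.3448$ all hold, and the restriction to even eigenfunctions is indeed essential for the two regimes to overlap, since the first odd eigenvalue of $\mathcal{K}_0$ is $-2\Delta^2/\pi^2\approx-0.20\Delta^2$, which would leave a gap.

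What each approach buys: the paper's computation is self-contained within its section and yields a sign ($\mathcal{L}<0$ or $\operatorname{Im}\mathcal{L}<0$), which is slightly more information than nonvanishing. Your argument is more conceptual (it explains $\mathcal{L}\ne 0$ as the absence of an eigenvalue crossing) and more modular (Plancherel and a single Weyl inequality replace several pages of trigonometric estimates), but it relies on the equivalence ``$\mathcal{L}=0\iff$ nontrivial even solution of the homogeneous integral equation,'' and in particular on the converse implication that an even solution of the homogeneous ODE satisfying the two BCs solves the integral equation. That converse is proved in the paper only for $c_3=0$ (Lemma~\ref{lem:functionalodesimple}); for $c_3>0$ the paper sidesteps it via the uniqueness in Lemma~\ref{lem:functionalodeexp}(i), which is only stated for $\lambda\Delta^2<2$. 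Since your lemma must hold up to $2.9$, you cannot appeal to that uniqueness and should instead verify the converse directly, by showing the auxiliary function $G(\xi)=c_1u(\xi)+c_2\int_I u(\alpha)|\xi-\alpha|e^{-c_3|\xi-\alpha|}\,\d\alpha$ satisfies $G^{(4)}-2c_3^2G''+c_3^4G=0$ whenever $u$ solves the homogeneous ODE, and that the four boundary conditions force $G(0)=G'(0)=G''(0)=G'''(0)=0$, whence $G\equiv 0$. That computation goes through (I checked it), so the proof is sound, but this step should be made explicit rather than cited as ``running the lemma's argument,'' because the cited lemma's hypothesis $\lambda\Delta^2<2$ is strictly narrower than the range you need.
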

\begin{proof}[Proof of Lemma \ref{lem:nonvanishing}]
As previously, $\lambda := c_2/c_1$. Let us set the notation $I_k(\eta)$ as
\begin{align*}
    I_k(\eta):&=\int_{-\Delta/2}^{\Delta/2} \cosh{(\eta \alpha)}\lvert\alpha\rvert^k e^{-c_3 \lvert\alpha\rvert }\,d\alpha, 
\end{align*} 
for $\eta \in \mathbb{C}$ and $k\in \{0, 1\}$. Thus,  \eqref{eq:auxiliaryA} and \eqref{eq:auxiliaryB} become 
\begin{align*}
A(\eta)=1+\lambda I_1(\eta) \quad\text{ and }\quad
B(\eta)=\eta^2+2\lambda-c_3^2-2\lambda c_3I_0(\eta).
\end{align*}
Define further
\begin{equation*}
    \Lambda(\lambda, c_3)=\begin{cases}
        \sqrt{\lambda-4c_3^2}, \quad \text{ if } \lambda\geq 4c_3^2;\\
        i\sqrt{4c_3^2-\lambda}, \quad \text{ if } \lambda< 4c_3^2.
    \end{cases}
\end{equation*}
From the assumption $\lambda \neq 4c_3^2$, we obtain, as in the proof of Theorem \ref{thm:reprokernelkzero}, that $\eta_1 \neq \eta_2$. From the hypothesis $\eta_1 + \eta_2 \neq 0$, we can assume without loss of generality that
\begin{align}\label{two solution}
    \eta_1^2 &= c_3^2 - \lambda + \sqrt{\lambda} \Lambda(\lambda, c_3) \quad\mbox{ and }\quad
    \eta_2^2 = c_3^2 - \lambda - \sqrt{\lambda} \Lambda(\lambda, c_3) .
\end{align}
Expanding and regrouping the terms of $ \mathcal{L}$ leads us to the following
\begin{equation}\label{eq:case1expression}
    \begin{split}
       \mathcal{L}=-2\sqrt{\lambda}\Lambda(\lambda, c_3)&+\lambda^2\big(I_1(\eta_1)-I_1(\eta_2)\big)-\lambda\sqrt{\lambda}\Lambda(\lambda, c_3)\big(I_1(\eta_1)+I_1(\eta_2)\big)\\
        &+2\lambda c_3\big(I_0(\eta_1)-I_0(\eta_2)\big)-2\lambda^2 c_3\big(I_0(\eta_2) I_1(\eta_1)-I_0(\eta_1) I_1(\eta_2)\big)\\
       = L_1+L_2,
    \end{split}
\end{equation}
where 
\begin{align*}
    &L_1:= -\sqrt{\lambda}\Lambda(\lambda, c_3)\big(2+\lambda\big(I_1(\eta_1)+I_1(\eta_2)\big)\big)\\
    \intertext{and}
    & L_2:= \lambda^2\big(I_1(\eta_1)-I_1(\eta_2)\big)+2\lambda c_3\big(I_0(\eta_1)-I_0(\eta_2)\big)-2\lambda^2 c_3\big(I_0(\eta_2) I_1(\eta_1)-I_0(\eta_1) I_1(\eta_2)\big).
\end{align*}

Let $\sigma := \lambda \Delta^2$, which by hypothesis satisfies $\sigma \leq 2.9$.
We can treat the problem in two disjoint cases:
(I)  $c_3<\frac{\sqrt{\lambda}}{2}$ and (II) $c_3>\frac{\sqrt{\lambda}}{2}$.

We proceed with case (I). Since the solutions of \eqref{eq:characteristic}  are purely imaginary we can take $\eta_1 = i y_1, \text{ and } \eta_2 = i y_2, \text{where } 0 < y_1 < y_2$. This choice of $\eta_1$ and $\eta_2$ implies no loss of generality because $A$ and $B$ are even functions and the non-vanishing of $\mathcal{L}$ is independent of this choice. 

For $k\in \{0,1\}$, by using a trigonometric identity for $\cos{x}-\cos{y}$, we can compute
\begin{align*}
    \left| I_k(\eta_1) \right.- & \left.I_k(\eta_2) \right|  = 4 \left| \int_0^{\Delta/2} \sin\left(\frac{y_1 + y_2}{2} \; \alpha \right)  \sin\left(\frac{y_1 - y_2}{2} \; \alpha \right) \alpha^k  e^{-c_3 \alpha} \d\alpha \right| \\
    &\leq 4 \frac{\left| y_1 - y_2 \right|}{2} \cdot\frac{y_1+y_2}{2}\int_0^{\Delta/2} \alpha^{2+k}\; \d\alpha=  (\eta_1^2 - \eta_2^2)\frac{\Delta^{k+3}}{(k+3)2^{k+3}}= \sqrt{\lambda}\sqrt{\lambda - 4c_3^2}\frac{\Delta^{k+3}}{(k+3)2^{k+2}}.
\end{align*}
On the other hand, by  employing the identity
\begin{equation*}
    \begin{split}
        \cos(y_2 \alpha)\cos(y_1 \beta) - \cos(y_1 \alpha) \cos(y_2 \beta)= &\sin\left(\frac{(y_1+y_2)(\alpha+\beta)}{2}\right)\sin\left(\frac{(y_1-y_2)(\alpha-\beta)}{2}\right)\\
        &\qquad\quad+\sin\left(\frac{(y_1+y_2)(\alpha-\beta)}{2}\right)\sin\left(\frac{(y_1-y_2)(\alpha+\beta)}{2}\right)
    \end{split}
\end{equation*}
and the inequality $\sin{x}\leq x$, we derive that
\begin{align*}
     \left\lvert I_0(\eta_2) I_1(\eta_1) \right. &- \left. I_0(\eta_1) I_1(\eta_2) \right\rvert \leq 2(y_2^2-y_1^2)\int\limits_0^{\Delta/2} \int\limits_0^{\Delta/2} \left\lvert\alpha^2-\beta^2\right\rvert\beta\; \d\alpha \d\beta 
     = \frac{11}{480}\Delta^5\sqrt{\lambda}\sqrt{\lambda - 4c_3^2}.
    \end{align*}
Combining the two upper bounds above we get
\begin{equation*}
       |L_2| \leq \sqrt{\lambda}\sqrt{\lambda-4c_3^2}\bigg(\frac{\lambda^{2}\Delta^4}{32}+\frac{\lambda^{3/2}\Delta^3}{12}+\frac{11\lambda^{2}\Delta^5\sqrt{\lambda}}{480}\bigg) = \sqrt{\lambda}\sqrt{\lambda-4c_3^2}\bigg(\frac{\sigma^2}{32}+\frac{\sigma^{3/2}}{12}+\frac{11\sigma^{5/2}}{480}\bigg).
\end{equation*}
Since we know $0<y_1<y_2 = \sqrt{\lambda - c_3^2 + \sqrt{\lambda}\sqrt{\lambda - 4 c_3^2}} <\sqrt{2\sigma}/{\Delta}$ and $\sigma\leq 2.9 < \frac{\pi^2}{2}$, this implies that $ y_1\alpha, y_2\alpha \in \left[0, \frac{\pi}{2}\right],$ for all $\alpha\in \left[0, \frac{\Delta}{2}\right]$. Therefore, $\cos(y_1\alpha)\geq\cos(y_2\alpha)\geq0$ for all $\alpha\in \left[0, \frac{\Delta}{2}\right]$ and hence  $I_1(\eta_1)+I_1(\eta_2)>0$. Thus, $L_1<-2\lambda \sqrt{\lambda}\sqrt{4c_3^2-\lambda}$ holds.

Finally, by our assumption $\sigma \leq 2.9$, we have 
\begin{align*}
    \mathcal{L} \leq L_1+|L_2| < \sqrt{\lambda}\sqrt{\lambda-4c_3^2}\bigg(\frac{\sigma^2}{32}+\frac{\sigma^{3/2}}{12}+\frac{11\sigma^{5/2}}{480} -2\bigg)<0.
\end{align*}
 
 Now, we treat case (II). Here, the solutions to \eqref{eq:characteristic} are of the form $\eta_1, -\eta_1, \overline{\eta_1}, -\overline{\eta_1}$. We make the choice that $\eta_1$ should be in the first quadrant of the plane, and select $\eta_2=\overline{\eta_1}$. Again, this selection implies no loss of generality because $A$ and $B$ are even functions and the non-vanishing of $\mathcal{L}$ is independent of this choice. Given this, we can see $\mathcal{L}$ is a purely imaginary number. 
 
 Let $\eta_1 =x_1+ i y_1$, where $x_1>0$ and  $y_1>0$, then $\eta_2 =x_1- i y_1$ and $\eta_1^2 = c_3^2 - \lambda + i\sqrt{\lambda}\sqrt{4c_3^2-\lambda}$ which gives
\begin{align*}
    x_1=c_3\sqrt{\frac{1}{2}-\frac{\lambda}{2c_3^2}+\frac{1}{2}\sqrt{1+\frac{2\lambda}{c_3^2}}} \quad\text{ and }\quad    y_1=\frac{\sqrt{\lambda}\sqrt{4c_3^2-\lambda}}{2c_3\sqrt{\frac{1}{2}-\frac{\lambda}{2c_3^2}+\frac{1}{2}\sqrt{1+\frac{2\lambda}{c_3^2}}}}.
\end{align*}
Notice that $\sigma\leq 2.9 < \pi^2$, so we can identify the signs of the terms involving trigonometric factors in $I_1(\eta_1)$ and $I_1(\eta_2)$, because $0<y_1<\sqrt{\lambda}$, and we have $ y_1\alpha \in [0, \pi/2]$ for $0<\alpha<\Delta/2$. Hence, $I_1(\eta_1)+I_1(\eta_2) >0$ and, consequently,
 \begin{align}\label{ImL_1}
           \operatorname{Im}(L_1)<-2\lambda \sqrt{\lambda}\sqrt{4c_3^2-\lambda}.
        \end{align}
For $k\in \{0,1\}$, we have
\begin{align*}
        \operatorname{Im}\bigg\{I_k(\eta_1)-I_k(\eta_2)\bigg\}& =2\operatorname{Im}\left\{I_k(x_1+i y_1)\right\}=4\int_0^{\Delta/2}\sinh(x_1\alpha)\sin(y_1\alpha)\alpha^k e^{-c_3\alpha}\,\d\alpha.
\end{align*}
Also, by using a trigonometric identity for $\cosh(x+y)$ and a few simplifications, we get
\begin{align*}
    \operatorname{Im}\bigg\{ & I_0(\eta_2)I_1(\eta_1) -  I_0(\eta_1)I_1(\eta_2)\bigg\} =2\operatorname{Im}\left\{I_0(x_1-i y_1)I_1(x_1+i y_1)\right\}\\
    &=4\int\limits_{0}^{\Delta/2}\int\limits_{0}^{\Delta/2}[\sinh(x_1(\alpha+\beta))\sin(y_1(\alpha-\beta))+ \sinh(x_1(\alpha-\beta))\sin(y_1(\alpha+\beta))] \alpha e^{-c_3(\alpha+\beta)}\,\d\alpha\,\d\beta.
    \end{align*}
    
At this point, we have to consider two sub-cases: (a) $c_3\in \left(\frac{\sqrt{\lambda}}{2}; \sqrt{\lambda}\right]$ and (b) $c_3>\sqrt{\lambda}$.

For subcase (a), notice that we always have $0<x_1<c_3$ and so, for $t \in \R$,
\begin{equation*}
    |\sinh(x_1 t)|=\frac{1}{2}\left\lvert e^{x_1 t}-e^{-x_1 t} \right\rvert= \left|x_1 t\right| e^{\xi t}, 
\end{equation*}
for some  $\xi \in [-x_1, x_1]\subseteq[-c_3, c_3]$. Therefore, we can say
\begin{equation*}
    \left\lvert \sinh(x_1(\alpha+\beta))\sin(y_1(\alpha-\beta)) \right\rvert\leq x_1 y_1\lvert \alpha^2-\beta^2\rvert \cdot e^{c_3(\alpha+\beta)}= \frac{1}{2}\sqrt{\lambda}\sqrt{4c_3^2-\lambda}\,\lvert \alpha^2-\beta^2\rvert \cdot e^{c_3(\alpha+\beta)}
\end{equation*}
for any $\alpha, \beta >0$.
Using this estimate, we deduce that 
 \begin{align*}
        \left\lvert \operatorname{Im}\bigg\{I_0(\eta_2)I_1(\eta_1) -  I_0(\eta_1)I_1(\eta_2)\bigg\}\right\rvert&\leq 4\sqrt{\lambda}\sqrt{4c_3^2-\lambda}\int\limits_{0}^{\Delta/2}\int\limits_{0}^{\Delta/2}\lvert \alpha^2-\beta^2\rvert \alpha \,\d\alpha\,\d\beta = \frac{11\Delta^5}{480}\sqrt{\lambda}\sqrt{4c_3^2-\lambda}.
    \end{align*}
Also, for $k\in \{0,1\}$ we obtain using $x_1 y_1 = \frac{1}{2}\sqrt{\lambda}\sqrt{4c_3^2-\lambda}$ that
\begin{align*}
        \left\lvert \operatorname{Im}\bigg\{I_k(\eta_1)-I_k(\eta_2)\bigg\} \right \rvert &\leq 4 x_1 y_1\int_0^{\Delta/2} \alpha^{2+k}\,\d\alpha = \frac{\Delta^{3+k}}{(3+k)2^{k+2}}\sqrt{\lambda}\sqrt{4c_3^2-\lambda}.
        \end{align*}
        Hence, we get
    \begin{align*}
      |\operatorname{Im}(L_2)|\leq  \sqrt{\lambda}\sqrt{4c_3^2-\lambda}\left(\frac{11\lambda^{5/2}\Delta^5}{240}+\frac{\lambda^2\Delta^4}{32}+\frac{\lambda^{3/2}\Delta^3}{6}\right)
        = \sqrt{\lambda}\sqrt{4c_3^2-\lambda}\left(\frac{11}{240}\sigma^{5/2}+\frac{1}{32}\sigma^2+\frac{1}{6}\sigma^{3/2}\right) .   
    \end{align*}
Whenever $\sigma \leq 3$, combining the bound of $|\operatorname{Im}(L_2)|$ with \eqref{ImL_1} we get
\begin{align*}
        \operatorname{Im}\big\{\mathcal{L}\big\} \leq \operatorname{Im}(L_1) + |\operatorname{Im}(L_2)| < \sqrt{\lambda}\sqrt{4c_3^2-\lambda}\left(\frac{11}{240}\sigma^{5/2}+\frac{1}{32}\sigma^2+\frac{1}{6}\sigma^{3/2}-2\right) <0.
\end{align*}

In subcase (b), we have $c_3>\sqrt{\lambda}$ and 
\begin{equation*}
    |\sinh(x_1 (\alpha\pm\beta))e^{-c_3(\alpha+\beta)}|\leq \frac{1}{2},
\end{equation*}
for any $\alpha, \beta>0$. Using this estimate, we deduce that
\begin{align*}
        \left\lvert \operatorname{Im}\bigg\{I_0(\eta_2)I_1(\eta_1)\right. - \left. I_0(\eta_1)I_1(\eta_2)\bigg\}\right\rvert&\leq 2y_1\int\limits_{0}^{\Delta/2}\int\limits_{0}^{\Delta/2}\bigg(\lvert \alpha-\beta\rvert+|\alpha+\beta|\bigg) \alpha \,d\alpha\,d\beta =\frac{3\Delta^4 y_1}{32},
    \end{align*}  
and since $y_1 < \sqrt{\lambda}$, for $k \in \{0,1\}$,
        \begin{align*}
        \left\lvert \operatorname{Im}\bigg\{I_k(\eta_1)-I_k(\eta_2)\bigg\} \right \rvert &\leq 2y_1\int_0^{\Delta/2} \alpha^{k+1}\,d\alpha=\frac{y_1\Delta^{k+2}}{(k+2)2^{k+1}}\leq \frac{\sqrt{\lambda}\Delta^{k+2}}{(k+2)2^{k+1}}.
        \end{align*}
The two upper bounds above give 
\begin{align*}
        |\operatorname{Im}(L_2)| \leq c_3\sqrt{\lambda}\left(\frac{3}{16}\lambda^2{\Delta^4}+\frac{\lambda^{3/2}\Delta^3}{12}+\frac{\lambda\Delta^2}{2}\right) = c_3\sqrt{\lambda}\left(\frac{3}{16}\sigma^{2}+\frac{1}{12}\sigma^{3/2}+\frac{1}{2}\sigma\right).
    \end{align*}
Since $\lambda/c_3^2<1$, combining the upper bound of $\operatorname{Im}(L_2)$ with \eqref{ImL_1} we obtain that 
\begin{equation*}
        \operatorname{Im}\big\{\mathcal{L}\big\} \leq \operatorname{Im}(L_1) + |\operatorname{Im}(L_2)| <  c_3\sqrt{\lambda}\left(\frac{3}{16}\sigma^{2}+\frac{1}{12}\sigma^{3/2}+\frac{1}{2}\sigma -2 \sqrt{3}\right) <0,
\end{equation*}
whenever $\sigma\leq 2.9$.
\end{proof}

\section*{Acknowledgements}
The authors sincerely thank Emanuel Carneiro for his suggestions, discussions, and encouragement throughout this project. They are grateful to Steven Gonek for having read part of this manuscript and for his remarks about Conjecture 1.  They also thank Micah B. Milinovich for insightful discussions and Sohaib Khalid for helping us think outside of the box.

\end{document}